\setlist[enumerate,1]{font=\upshape}
\definecolor{todo-background-color}{gray}{0.95}
    \def\@settitle{%
      \vspace*{-10pt}
      \begin{flushleft}%
        \LARGE\bfseries
        \strut\@title\strut
      \end{flushleft}%
    }
    \def\@setauthors{%
      \begingroup
      \def\thanks{\protect\thanks@warning}%
      \trivlist
      \raggedright
      \large \@topsep27\p@\relax
      \advance\@topsep by -\baselineskip
    \item\relax
      \author@andify\authors
      \def\\{\protect\linebreak}%
      \authors
      \ifx\@empty\contribs
      \else
      ,\penalty-3 \space \@setcontribs
      \@closetoccontribs
      \fi
      \normalfont
      \endtrivlist
      \endgroup
    }
    \def\@setaddresses{\par
      \nobreak \begingroup
      \small\raggedright
      \def\author##1{\nobreak\addvspace\smallskipamount}%
      \def\\{\unskip, \ignorespaces}%
      \interlinepenalty\@M
      \def\address##1##2{\begingroup
        \par\addvspace\bigskipamount\noindent
        \@ifnotempty{##1}{(\ignorespaces##1\unskip) }%
        {\ignorespaces##2}\par\endgroup}%
      \def\curraddr##1##2{\begingroup
        \@ifnotempty{##2}{\nobreak\noindent\curraddrname
          \@ifnotempty{##1}{, \ignorespaces##1\unskip}\/:\space
          ##2\par}\endgroup}%
      \def\email##1##2{\begingroup
        \@ifnotempty{##2}{\nobreak\noindent E-mail address%
          \@ifnotempty{##1}{, \ignorespaces##1\unskip}\/:\space
          \ttfamily##2\par}\endgroup}%
      \def\urladdr##1##2{\begingroup
        \def~{\char`\~}%
        \@ifnotempty{##2}{\nobreak\noindent\urladdrname
          \@ifnotempty{##1}{, \ignorespaces##1\unskip}\/:\space
          \ttfamily##2\par}\endgroup}%
      \addresses
      \endgroup
      \global\let\addresses=\@empty
    }
    \def\@setabstracta{%
      \ifvoid\abstractbox
      \else
      \skip@17pt \advance\skip@-\lastskip
      \advance\skip@-\baselineskip \vskip\skip@
      \box\abstractbox
      \prevdepth\z@ 
      \vskip-28pt
      \fi
    }
    \renewenvironment{abstract}{%
      \ifx\maketitle\relax
      \ClassWarning{\@classname}{Abstract should precede
        \protect\maketitle\space in AMS document classes; reported}%
      \fi
      \global\setbox\abstractbox=\vtop \bgroup
      \normalfont\small
      \list{}{\labelwidth\z@
        \leftmargin0pc \rightmargin\leftmargin
        \listparindent\normalparindent \itemindent\z@
        \parsep\z@ \@plus\p@
        
      }%
    \item[\hskip\labelsep\bfseries\abstractname.]%
    }{%
      \endlist\egroup
      \ifx\@setabstract\relax \@setabstracta \fi
    }
    \def\ps@headings{\ps@empty
      \def\@evenhead{%
        \setTrue{runhead}%
        \normalfont\scriptsize
        \rlap{\thepage}\hfill
        \def\thanks{\protect\thanks@warning}%
        \leftmark{}{}}%
      \def\@oddhead{%
        \setTrue{runhead}%
        \normalfont\scriptsize
        \def\thanks{\protect\thanks@warning}%
        \rightmark{}{}\hfill \llap{\thepage}}%
      \let\@mkboth\markboth
    }\ps@headings
    \def\section{\@startsection{section}{1}%
      \z@{-1.4\linespacing\@plus-.5\linespacing}{.8\linespacing}%
      {\normalfont\bfseries\Large}}
    \def\subsection{\@startsection{subsection}{2}%
      \z@{-.8\linespacing\@plus-.3\linespacing}{.5\linespacing\@plus.2\linespacing}%
      {\normalfont\bfseries\large}}
    \def\subsubsection{\@startsection{subsubsection}{3}%
      \z@{.7\linespacing\@plus.2\linespacing}{-1.5ex}%
      {\normalfont\itshape}}
    \def\paragraph{\@startsection{paragraph}{4}%
      \z@{.7\linespacing\@plus.2\linespacing}{-1.5ex}%
      {\normalfont\itshape}}
    \def\@secnumfont{\bfseries}
    \renewcommand\contentsnamefont{\bfseries}
    \def\@starttoc#1#2{\begingroup
      \setTrue{#1}%
      \par\removelastskip\vskip\z@skip
      \@startsection{}\@M\z@{\linespacing\@plus\linespacing}%
      {.5\linespacing}{
        \contentsnamefont}{#2}%
      \ifx\contentsname#2%
      \else \addcontentsline{toc}{section}{#2}\fi
      \makeatletter
      \@input{\jobname.#1}%
      \if@filesw
      \@xp\newwrite\csname tf@#1\endcsname
      \immediate\@xp\openout\csname tf@#1\endcsname \jobname.#1\relax
      \fi
      \global\@nobreakfalse \endgroup
      \addvspace{32\p@\@plus14\p@}%
      \let\tableofcontents\relax
    }
    \def\contentsname{Contents}
    \def\l@section{\@tocline{2}{.5ex}{0mm}{5pc}{}}
    \def\l@subsection{\@tocline{2}{0pt}{2em}{5pc}{}}
\def\to{\mathchoice{\longrightarrow}{\rightarrow}{\rightarrow}{\rightarrow}}
\newcommand{\shortxra}[2][]{\ext@arrow 0359\rightarrowfill@{#1}{#2}}
\def\longrightarrowfill@{\arrowfill@\relbar\relbar\longrightarrow}
\newcommand{\longxra}[2][]{\ext@arrow 0359\longrightarrowfill@{#1}{#2}}
\renewcommand{\xrightarrow}[2][]{\mathchoice{\longxra[#1]{#2}}%
  {\shortxra[#1]{#2}}{\shortxra[#1]{#2}}{\shortxra[#1]{#2}}}
\def\addtagsub#1{\let\oldtf=\tagform@\def\tagform@##1{\oldtf{##1}\hbox{$_{#1}$}}}
\def\Nopagebreak{\@nobreaktrue\nopagebreak}
\newtheoremstyle{theorem-giventitle}
        {}{}              
        {\itshape}                      
        {}                              
        {\bfseries}                     
        {.}                             
        {\thm@headsep}                             
        {\thmnote{\bfseries#3}}
\newtheoremstyle{theorem-givenlabel}
        {}{}              
        {\itshape}                      
        {}                              
        {\bfseries}                     
        {.}                             
        {\thm@headsep}                             
        {\thmname{#1}~\thmnumber{#3}\setcurrentlabel{#3}}
\newtheoremstyle{definition-giventitle}
        {}{}              
        {}                      
        {}                              
        {\bfseries}                     
        {.}                             
        {\thm@headsep}                             
        {\thmnote{\bfseries#3}}
\def\setcurrentlabel#1{\gdef\@currentlabel{#1}}
\newtheorem{theorem}{Theorem}[section]
\newtheorem{theoremalpha}{Theorem}
\newtheorem{lemma}[theorem]{Lemma}
\theoremstyle{definition}
\newtheorem{definition}[theorem]{Definition}
\newtheorem{remark}[theorem]{Remark}
\newtheorem*{case2'}{Case 2$'$}
\theoremstyle{theorem-giventitle}
\newtheorem{theorem-named}{}
\theoremstyle{theorem-givenlabel}
\newtheorem{theorem-labeled}{Theorem}
\theoremstyle{definition-giventitle}
\newtheorem{definition-named}{}
\newtheorem{conjecture-named}{}
\newtheorem{case-named}{}
\numberwithin{equation}{section}
\def\d{\partial}
\def\Z{\mathbb{Z}}
\def\Q{\mathbb{Q}}
\def\C{\mathbb{C}}
\def\cP{\mathcal{P}}
\def\cT{\mathcal{T}}
\def\tilde{\widetilde}
\def\hat{\widehat}
\def\sm{\smallsetminus}
\DeclareMathOperator\Ker{Ker}
\DeclareMathOperator\Wh{Wh}
\DeclareMathOperator\sign{sign}
\def\spinc{spin$^c$}
\DeclareMathOperator\Spin{Spin}
\def\Spinc{\Spin^c}
\def\lk{\operatorname{lk}}
\def\otimesover#1{\mathbin{\mathop{\otimes}_{#1}}}
\def\cupover#1{\mathbin{\mathop{\cup}_{#1}}}
\def\csum{\mathbin{\#}}
\def\sbmatrix#1{\big[\begin{smallmatrix}#1\end{smallmatrix}\big]}
\def\N{\mathcal{N}}
\def\Bl{\mathop{B\ell}}
\def\rhot{\rho^{(2)}}
\def\osigmat{\bar\sigma^{(2)}}
\begin{document}

\title{The bipolar filtration of topologically slice knots}

\author{Jae Choon Cha}
\address{
  Center for Research in Topology\\
  POSTECH\\
  Pohang Gyeongbuk 37673\\
  Republic of Korea\quad
  \linebreak
  School of Mathematics\\
  Korea Institute for Advanced Study \\
  Seoul 02455\\
  Republic of Korea
}
\email{jccha@postech.ac.kr}

\author{Min Hoon Kim}
\address{
  Department of Mathematics\\
  Chonnam National University \\
  Gwangju 61186\\
  Republic of Korea
}
\email{minhoonkim@jnu.ac.kr}

\def\subjclassname{\textup{2010} Mathematics Subject Classification}
\expandafter\let\csname subjclassname@1991\endcsname=\subjclassname
\expandafter\let\csname subjclassname@2000\endcsname=\subjclassname
\subjclass{%
  57N13, 
  57M27, 
  57N70, 
  57M25
}

\begin{abstract}
  The bipolar filtration of Cochran, Harvey and Horn presents a
  framework of the study of deeper structures in the smooth
  concordance group of topologically slice knots.  We show that the
  graded quotient of the bipolar filtration of topologically slice
  knots has infinite rank at each stage greater than one.  To detect
  nontrivial elements in the quotient, the proof simultaneously uses
  higher order amenable Cheeger-Gromov $L^2$ $\rho$-invariants and
  infinitely many Heegaard Floer correction term $d$-invariants.
\end{abstract}

\maketitle

\section{Introduction}

Understanding the difference of the topological and smooth categories is among the main objectives of the topological study of dimension~4.
Knot concordance, which may be viewed as the local case of the general disk embedding problem in dimension 4, has been studied extensively from this viewpoint.
Indeed it is known that several questions on 4-manifolds and smoothings can be investigated via concordance.
In the study of smooth concordance, an ultimate goal is to understand the structure of the smooth concordance group of topologically slice knots, which we denote by~$\cT$.
The group $\cT$ measures the gap between the smooth and topological categories.

In the literature, there are remarkable advances in the study of $\cT$, which are achieved by modern 4-manifold technologies.
The first examples of topologically slice knots which are not smoothly slice,
due to Akbulut and Casson, are established as a consequence of the results of Freedman~\cite{Freedman:1982-1,Freedman:1984-1} and Donaldson~\cite{Donaldson:1983-1}.
As an abelian group, $\cT$ is infinitely generated~\cite{Endo:1995-1}, and the 2-torsion subgroup of $\cT$ is infinitely generated~\cite{Hedden-Kim-Livingston:2016-1}.
We do not attempt to provide a complete list of known results, but as further investigation of the structure of $\cT$, we note that results on summands of $\cT$ \cite{Livingston:2004-1, Livingston:2008-1, Manolescu-Owens:2007-1, Hom:2015-1}, and the study of the quotient of $\cT$ modulo the subgroup of Alexander polynomial one knots \cite{Hedden-Livingston-Ruberman:2012-1, Hedden-Kim-Livingston:2016-1} are especially significant.

Nonetheless, our understanding is still far from obtaining a \emph{classification} of~$\cT$.
In fact we believe that known smooth invariants and obstructions, including those from various versions of gauge theory, Heegaard Floer homology and Khovanov homology, are short of classifying~$\cT$; for instance~\cite[Proposition~1.2]{Cochran-Harvey-Horn:2012-1} suggests that a majority of invariants would not be able to see structures in deep part of $\cT$ unless they were coupled better with the fundamental group.
We will discuss this in more details below.

Regarding the above, we remark that the role of the fundamental group is relatively better understood for topological concordance, to provide a framework toward a classification, especially in developments initiated by work of Cochran, Orr and Teichner~\cite{Cochran-Orr-Teichner:1999-1}.
It may be viewed as an obstruction theoretic approach via a filtration and exact sequences involving iterated quotients.
$L^2$-signatures associated with the $n$th derived subgroup of the fundamental group give obstructions at the $n$th stage.
We also remark that obstruction theoretic approaches give beautiful classifications for link homotopy~\cite{Habegger-Lin:1990-1} and Whitney tower concordance~\cite{Conant-Schneiderman-Teichner:2011-1} where Milnor invariants extracted from the fundamental group are key obstructions.

The main result of this paper shows that modern smooth techniques can be combined with fundamental group information more strongly, to detect nontrivial elements in $\cT$ for which the majority of known smooth obstructions vanish.
Our result also provides, from an obstruction theoretic viewpoint for $\cT$, information on what the iterated quotients may look like.

To discuss our result explicitly, we consider the \emph{bipolar filtration}
\[
  \cT \supset \cT_0 \supset \cT_1 \supset \cdots \supset \cT_n \supset
  \cdots \supset \{0\}
\]
introduced by Cochran, Harvey and Horn~\cite{Cochran-Harvey-Horn:2012-1}. Briefly, the filtration reflects definiteness of the intersection form motivated from Donaldson's work, together with fundamental group information related to derived subgroups and the tower techniques of Casson and Freedman for 4-manifolds and work of Cochran, Orr, and Teichner on knot concordance.
The definition of the subgroup $\cT_n\subset \cT$, which is associated with the $n$th derived subgroup, is recalled in Section~\ref{subsection:definition-bipolar-filtration}.

It is noteworthy that the $\tau$-invariant~\cite{Ozsvath-Szabo:2003-1} and the $\epsilon$-invariant~\cite{Hom:2014-1} are trivial on $\cT_0$, and the slice obstructions from the Heegaard Floer correction term invariants~\cite{Manolescu-Owens:2007-1, Owens-Strle:2006-1, Jabuka-Naik:2007-1, Greene-Jabuka:2011-1} vanish on~$\cT_1$~\cite{Cochran-Harvey-Horn:2012-1}.
Also, from results in~\cite{Cochran-Harvey-Horn:2012-1, Ozsvath-Stipsicz-Szabo:2017-1, Ni-Wu:2015-1, Hom-Wu:2016-1}, it follows that the $\nu^+$-invariant~\cite{Hom-Wu:2016-1} and the $\Upsilon$-invariant~\cite{Ozsvath-Stipsicz-Szabo:2017-1} vanish on~$\cT_0$.
This is a rigorous description of the aforementioned claim that a majority of known smooth invariants does not see deep part of~$\cT$.
To understand structures in $\cT_n$ ($n\ge 1$), it seems necessary to combine existing smooth techniques with more information associated with the fundamental group.

Toward a classification of~$\cT$, two questions on the bipolar filtration are fundamental:
\begin{enumerate}[label=(\roman*)]
  \item What is the quotient $\cT_n/\cT_{n+1}$ for each~$n$? Especially, is it nontrivial?
  \item What is the transfinite term $\cT_\omega = \bigcap_{n\ge 0} \cT_n$? Especially, is it nontrivial?
\end{enumerate}

The main focus of this paper is on the first question.
The case of $n=0$, $1$ were the only cases for which the nontriviality of $\cT_n/\cT_{n+1}$ was previously known~\cite{Cochran-Harvey-Horn:2012-1, Cochran-Horn:2012-1}.
We remark that the nontriviality of $\cT_1/\cT_2$ is already striking, since the above smooth invariants do not detect it.
In \cite{Cochran-Harvey-Horn:2012-1} it was achieved by combining Heegaard Floer homology with Casson-Gordon invariants associated with metabelian quotients of the fundamental group.

The main result of this paper establishes that $\cT_n/\cT_{n+1}$ is
large for $n\ge 2$.

\begin{theoremalpha}
  \label{theorem:main-nontriviality-result}
  For each $n\ge 2$, the quotient $\cT_n/\cT_{n+1}$ has infinite rank.
\end{theoremalpha}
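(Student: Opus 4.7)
The plan is to construct, for each $n\ge 2$, an infinite family $\{K_{n,i}\}_{i\ge 1}$ of topologically slice knots lying in $\cT_n$ whose images in $\cT_n/\cT_{n+1}$ are rationally linearly independent. I would build each $K_{n,i}$ by $n$ iterated satellite (infection) operations, starting from a family of topologically slice seed knots $\{J_i\}$ chosen so that the Heegaard Floer correction terms of appropriate cyclic branched covers $\Sigma_p(J_i)$ are rationally independent (Whitehead doubles of the type used in \cite{Hedden-Kim-Livingston:2016-1,Hom:2015-1} are a natural source). At the $k$-th step the infection is performed along an unknotted curve $\eta_k$ whose homotopy class lies in a sufficiently deep derived subgroup of the fundamental group of a topological slice disk exterior of the previous stage.

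The containment $K_{n,i}\in\cT_n$ would be established by building an explicit $n$-bipolar $4$-manifold in which $K_{n,i}$ bounds a smooth disk, using the satellite/infection calculus of \cite{Cochran-Harvey-Horn:2012-1}. Each infection contributes one bipolar level via an infection cobordism with trivial intersection form, with the null-homotopy of $\eta_k$ in a topological slice exterior supplying the required derived-homotopy data.

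For independence in $\cT_n/\cT_{n+1}$ I would argue by contradiction: assume a nontrivial finite rational combination $K=\csumover{i} a_i K_{n,i}$ lies in $\cT_{n+1}$, so $K$ bounds a smooth disk $D$ in an $(n+1)$-bipolar $4$-manifold $V$. Passing to the iterated prime-power branched cover $\tilde V$ of $V$ along $D$ of depth $n+1$, whose boundary $\tilde M$ covers the zero-surgery $M_K$, I would impose two complementary obstructions. First, an amenable Cheeger--Gromov $L^2$ $\rho$-invariant over the iterated solvable/amenable quotient, combined with Cha's amenable signature machinery, yields an explicit lower bound on the $\rho$-contribution from the infection curves $\eta_k$. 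Second, the $(n+1)$-bipolar structure of $\tilde V$ forces certain Heegaard Floer $d$-invariants on $\tilde M$ to satisfy controlled bounds in the style of \cite{Cochran-Horn:2012-1}; after matching spin$^c$ structures up the tower, these translate into a rational identity among $\{d(\Sigma_p(J_i),\mathfrak{s}_i)\}$ weighted by the $a_i$.

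The main obstacle is synchronizing the two obstructions so that together they force $a_i=0$ for every $i$. The $\rho$-invariant alone detects only finitely many classes and cannot by itself deliver infinite rank, while the $d$-invariant argument requires the tower of branched covers of $V$ to faithfully preserve the Heegaard Floer data of the seed $\Sigma_p(J_i)$. I expect the crucial technical ingredient to be a decomposition/additivity lemma showing that $d$-invariants of iterated covers of $M_K$ split as a weighted sum $\sum_i a_i\, d(\Sigma_p(J_i),\mathfrak{s}_i)$ up to an error absorbed by the $\rho$-bound, provided the Cheeger--Gromov $\rho$ of the infection curves $\eta_k$ is chosen sufficiently large. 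The rational independence of the seed $d$-values would then force every $a_i$ to vanish. Calibrating the $\rho$-values of the infection curves against the $d$-invariant data of the seeds, over $n+1$ layers of satellites and branched covers, is where the real work of the argument will lie.
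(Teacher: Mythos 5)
Your proposal correctly identifies the two kinds of invariants that appear in the paper, but the way you propose to combine them contains a genuine gap, and the overall architecture is different from what actually works.

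The paper's key structural move is a dichotomy, not a calibration. Given $K=\csum_i a_i K_i$ assumed $(n+1)$-bipolar, the authors glue the $(n+1)$-negaton to $n$-negatons of the other summands to produce an $n$-negaton $X^-$ for $K_1$ alone. The kernel $P\subset H_1(M(K_1);\Q[t^{\pm1}])$ of the inclusion into $X^-$ must be a metabolizer of the Blanchfield form, and since $K_1=R(J^1_{n-1},D)$ is built on the $9_{46}$ pattern, that metabolizer is forced to be exactly one of the two cyclic summands $\langle\alpha_J\rangle$ or $\langle\alpha_D\rangle$. The two cases are then handled by \emph{separate} invariants: amenable Cheeger--Gromov $\rho$-invariants when $P=\langle\alpha_D\rangle$, and Heegaard Floer $d$-invariants when $P=\langle\alpha_J\rangle$. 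Crucially, in the $d$-invariant case the companion $J^1_{n-1}$ is \emph{discarded} (via the unknotting-by-positive-crossings lemma \cite[Lemma~8.2]{Cochran-Harvey-Horn:2012-1}), and the $d$-invariant obstruction is applied to the single fixed knot $K_0=R(U,D)$, independent of all the $a_i$. The infinite family of $d$-invariants appearing in the paper is indexed by the covering degree $m$, not by the summands $K_i$; it is needed only because the metabolizer identification $G=\langle x_1\rangle$ in $H_1(\Sigma_m)$ holds merely for $m$ sufficiently large relative to the particular negaton. The coefficients $a_i$ are detected entirely on the $\rho$-invariant side, through the Levine--Tristram conditions (J1)--(J3) on the seed knots $J^i_0$.

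By contrast, your proposal hinges on a ``decomposition/additivity lemma showing that $d$-invariants of iterated covers of $M_K$ split as a weighted sum $\sum_i a_i\, d(\Sigma_p(J_i),\mathfrak{s}_i)$ up to an error absorbed by the $\rho$-bound.'' No such lemma is available, and this is where the argument breaks. Heegaard Floer $d$-invariants of branched covers are not additive under satellite operations or under connected sum of cover data in any way that would make $\sum_i a_i\, d(\Sigma_p(J_i),\mathfrak{s}_i)$ appear; moreover, the relevant $\operatorname{spin}^c$ structures are constrained by which metabolizer the bipolar $4$-manifold selects, and without controlling that choice you cannot even match $\operatorname{spin}^c$ structures across summands. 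Your instinct that ``the $\rho$-invariant alone detects only finitely many classes'' is also misplaced: in the paper it is precisely the $\rho$-invariants of the infinitely many $J^i_0$ that furnish infinite rank, once Case~1 is isolated. And in Case~2 the $d$-invariant computation (Theorem~\ref{theorem:d-invariant}) is a uniform contradiction that does not depend on the $a_i$ at all. The missing idea in your proposal is the Blanchfield metabolizer dichotomy coming from the negaton: it is what lets one invariant suffice per case, obviating any need to synchronize the two obstructions numerically.
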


In what follows we discuss some aspects of Theorem~\ref{theorem:main-nontriviality-result} and the techniques of the proof.

\subsubsection*{Combining smooth invariants with fundamental group information}

To detect structures unknown to be detected by existing smooth obstructions, our approach simultaneously uses Heegaard Floer correction term $d$-invariants and amenable Cheeger-Gromov $L^2$ $\rho$-invariants associated with higher derived series quotients of the fundamental group.
We remark that a combination of the Cheeger-Gromov invariant over a certain torsion-free solvable group and the $d$-invariant of the 3-fold cyclic branched cover was used earlier in~\cite{Cochran-Harvey-Horn:2012-1}, in order to obtain a weaker result under an additional not-yet-proven hypothesis, which is implied by the homotopy ribbon slice conjecture.
Our improved method can be carried out without the homotopy ribbon type hypothesis.

A key technique we use is to consider, even for a single knot, an infinite family of $d$-invariants associated to branched covers of various degrees, together with the Cheeger-Gromov $\rho$-invariants.
We show that these infinitely many $d$-invariants are all nonzero for our examples, and derive the desired result using this.
Another key ingredient we employ is the amenable signature theorem in~\cite{Cha-Orr:2009-1,Cha:2010-1} for the Cheeger-Gromov invariants over locally $p$-indicable amenable groups.

Given that our approach successfully provides new information on the deep part of the smooth concordance group $\cT$, it seems natural to ask how other various modern smooth techniques can be improved to be coupled more tightly with topological information associated with the fundamental group.
We believe that further investigations along this direction will be intriguing.

\subsubsection*{Satellite constructions of knots}

We explicitly construct knots which are independent (and so generate a free abelian subgroup of infinite rank) in $\cT_n/\cT_{n+1}$ by using \emph{iterated satellite constructions}.
The knots are of the form $R(J,D)$ shown in Figure~\ref{figure:example-knot}, where $D$ means the positive Whitehead double of the right handed trefoil, and $J$ is a pattern knot.

\begin{figure}[H]
  \includestandalone{example-knot}
  \caption{A satellite knot $R(J,D)$.}
  \label{figure:example-knot}
\end{figure}

The pattern knots $J$ are given as follows.
For brevity, denote by $P(K)$ a satellite knot with pattern $P$ and companion~$K$.
Using a fixed pattern $P$ which is defined in Figure~\ref{figure:stevedore-pattern} in Section~\ref{subsection:construction-of-examples} and using an infinite family $\{J_0^i\}_{i=1}^\infty$ of knots described in Section~\ref{section:signature-realization-by-negative-knots},  
the knots $J$ are defined to be $P^{n-1}(J_0^i)=P(P(\cdots P(J_0^i)\cdots))$, obtained by the $(n-1)$st iterated satellite construction.

Satellite constructions are used as a standard tool in many papers on concordance and related subjects.
For instance, see~\cite{Gilmer:1983-1,Livingston:1983-1,Gilmer-Livingston:1992-1,Cochran-Orr-Teichner:2002-1, Cochran-Teichner:2003-1,Friedl-Teichner:2005-1,Cochran-Harvey-Leidy:2009-1,Friedl-Powell:2011-1,Cochran-Davis-Ray:2014-1,Cochran-Harvey-Powell:2017-1,Hedden:2007-1,Hedden-Livingston-Ruberman:2012-1,Hedden-Kirk:2012-1,Levine:2012-1,Hom:2015-1,Hedden-Kim-Livingston:2016-1}.
Especially, interesting new information has been discovered via satellite operators that produce knots generating subgroups of concordance groups which are not straightforward to detect but actually large.
For topological concordance, work of Cochran, Harvey and Leidy on the fractal nature~\cite{Cochran-Harvey-Leidy:2009-2} is notable.
Among remarkable results for the smooth case, Hedden and Kirk showed that Whitehead doubling has infinite rank image in~$\cT$~\cite{Hedden-Kirk:2012-1}, and proposed a conjecture that Whitehead doubling preserves independence in the smooth knot concordance group.

From this viewpoint, we note that the proof of Theorem~\ref{theorem:main-nontriviality-result} shows that
satellite constructions produce a large subgroup even in the deep part of~$\cT$, where the notion of depth is rigorously given by the bipolar filtration.
More precisely, the satellite operator $Q_n=R(P^{n-1}(-),D)$ injects the infinite set of knots $\{J_0^i\}_{i=0}^\infty$ to a linearly independent subset in $\cT_n/\cT_{n+1}$ for $n\ge 2$ (and consequently in~$\cT_n \subset \cT$).
It appears to be an interesting direction to enrich currently available various smooth invariants, to detect large images even in the deep part of $\cT$ for sophisticated satellite operators such as $Q_n$ above.
Our method supports that it would not be completely impossible.

Also, note that the statement of Theorem~\ref{theorem:main-nontriviality-result} tells that the quotients $\cT_n/\cT_{n+1}$ ($n\ge 2$) have a similarity in that they are of the same rank.
The above discussion exhibits more about the similarity from a geometric viewpoint:
for all $n\ge 2$, our $\Z^\infty$ subgroup in $\cT_n/\cT_{n+1}$ is generated by the image of the \emph{same} knots $J_0^i$ under the satellite operator~$Q_n$.
This shows that the fractal nature of the topological knot concordance group proposed and investigated in \cite{Cochran-Harvey-Leidy:2009-2} is also found, at the least partially, in the smooth concordance group~$\cT$ of topologically slice knots.

\subsubsection*{Strategy of the proof and organization of the paper}

The construction of $Q_n(J_0^i)=R(P^{n-1}(J_0^i),D)$ is also closely related to the strategy of the proof of Theorem~\ref{theorem:main-nontriviality-result}\@.
Recall that if one attempted to prove a knot is not slice, it would be natural to start by investigating metabolizers of the Blanchfield pairing.
For the knot $R(P^{n-1}(J_0^i),D)$, it turns out to have exactly two metabolizers: submodules $\langle\alpha_D\rangle$ and $\langle\alpha_J\rangle$ generated by linking circles $\alpha_D$ and $\alpha_J$ of the handles along which $D$ and $J=P^{n-1}(J_0^i)$ are tied in. (See Figure~\ref{figure:example-knot} above, and also Figure~\ref{figure:base-seed-knot} in Section~\ref{section:construction-of-examples-first-step-proof}.)
Indeed, we begin similarly to prove Theorem~\ref{theorem:main-nontriviality-result}, by considering the two possible metabolizers.
In Section~\ref{section:construction-of-examples-first-step-proof}, we recall the definition of the bipolar filtration, describe the construction of the above satellite knots in detail, and divide the proof into the two cases.
In Sections~\ref{section:using-cheeger-gromov} and~\ref{section:signature-realization-by-negative-knots}, we present the proof for the case of $\langle\alpha_D\rangle$, using amenable Cheeger-Gromov $\rho$-invariants.
In Sections~\ref{section:using-d-invariant} and~\ref{section:the-cobordism-W}, we treat the case of $\langle\alpha_J\rangle$, using infinitely many Heegaard Floer $d$-invariants.

\subsubsection*{Comparison with the link case}

We remark that for the multi-component link case, the nontriviality of $\cT_n/\cT_{n+1}$ was proven earlier by the first named author and Powell~\cite{Cha-Powell:2014-1}.
They built a geometric operation which systematically pushes certain links nontrivial in $\cT_{n-1}/\cT_{n}$ to links nontrivial in the next stage~$\cT_n/\cT_{n+1}$, using covering link calculus.
This works only for links, since the covering link technique requires multi-components.
The approach used in this paper for knots is of a completely different nature.
The main results (Theorems~1.1 and~1.2) of \cite{Cha-Powell:2014-1} for $n\ge 2$ can be obtained as immediate consequences of our Theorem~\ref{theorem:main-nontriviality-result}.


\subsubsection*{Acknowledgements}

We would like to thank anonymous referees for comments which were very helpful in improving the exposition of this paper.
Part of this work was done during the authors' visit to the Max Planck Institute for Mathematics in Bonn.
JCC and MHK were partly supported by NRF grant 2019R1A3B2067839.
MHK was partly supported by POSCO TJ Park Science Fellowship and NRF grant 2021R1C1C1012939. 

\section{Examples and the first step of the proof}
\label{section:construction-of-examples-first-step-proof}

\subsection{Definition of the bipolar filtration}
\label{subsection:definition-bipolar-filtration}

We begin by recalling the definition of the bipolar
filtration~$\{\cT_n\}$.  In this paper, manifolds and submanifolds are
always assumed to be compact, oriented and smooth.  For a knot~$K$,
denote by $M(K)$ the zero-surgery manifold.  Denote by $G^{(n)}$ the
$n$th derived subgroup of a group $G$, which is defined by $G^{(0)}=G$
and $G^{(n+1)}=[G^{(n)},G^{(n)}]$.

\begin{definition}[{\cite[Definition~5.1]{Cochran-Harvey-Horn:2012-1}}]
  \label{definition:positivity-negativity}
  A knot $K$ in $S^3$ is \emph{$n$-negative} if $M(K)$ bounds a
  connected 4-manifold $V$ satisfying the following.
  \begin{enumerate}
  \item The inclusion induces an isomorphism $H_1(M(K))\to H_1(V)$ and
    a meridian of $K$ normally generates~$\pi_1(V)$.
  \item There is a basis for $H_2(V)$ which consists of the classes of
    closed connected surfaces $\{S_i\}$, disjointly embedded in $V$,
    with self-intersection number $S_i\cdot S_i=-1$ (or equivalently,
    with normal bundle with Euler class~$-1$).
  \item For each $i$, the image of $\pi_1(S_i)$ lies
    in~$\pi_1(V)^{(n)}$.
  \end{enumerate}
  The above 4-manifold $V$ is called an \emph{$n$-negaton bounded by
    $M(K)$}.

  An \emph{$n$-positive knot} and an \emph{$n$-positon} are defined by
  replacing the self-intersection condition by $S_i\cdot S_i=+1$.
  A knot is \emph{$n$-bipolar} if it is $n$-positive and $n$-negative.
\end{definition}

Recall that $\cT$ denotes the smooth knot concordance group of
topologically slice knots.  The group operation is connected sum.  For
an integer $n\geq 0$, let $\cT_n$ be the subset of $\cT$ consisting of
the concordance classes of $n$-bipolar knots.  In
\cite{Cochran-Harvey-Horn:2012-1}, it was shown that $\cT_n$ is a
subgroup of~$\cT$.  It is straightforward that
$\cT_{n+1}\subset \cT_n$.

\begin{definition}[{\cite[Definition~2.6]{Cochran-Harvey-Horn:2012-1}}]
  The descending filtration
  \[
    \cT \supset \cT_0 \supset \cT_1 \supset \cdots \supset \cT_n
    \supset\cdots \supset \{0\}
  \]
  is called the \emph{bipolar filtration} of~$\cT$.
\end{definition}

\subsection{Construction of examples}
\label{subsection:construction-of-examples}

Fix an integer $n\ge 2$.  In this section, we construct a sequence of
topologically slice $n$-bipolar knots $K_i$ ($i=1,2,\ldots$) by using
iterated satellite operations.  They will be proven to be
linearly independent in the quotient $\cT_n/\cT_{n+1}$ in later
sections.

We use the following notations for satellite operations.  For a
knot $J$ in $S^3$, denote the exterior by~$E_J$.  Suppose $J$ and $P$
are knots in $S^3$ and $\eta$ is a knot in $S^3\sm P$ which is
unknotted in~$S^3$.  Take the union $E_{\eta} \cup_\partial E_J$,
where the boundaries are attached along an orientation reversing
homeomorphism identifying a zero linking longitude of $\eta$ with a
meridian of $J$ and a meridian of $\eta$ with a zero linking longitude
of~$J$.  Let $P(\eta,J)$ be the image of $P$ in
$E_{\eta} \subset E_{\eta} \cup_\partial E_J \cong S^3$.  This is a
satellite knot with pattern $P$ and companion~$J$.

Our examples are of the following form.  Let $R$ be the knot
$9_{46}$ and $\alpha_J$, $\alpha_D$ be the curves shown in
Figure~\ref{figure:base-seed-knot}.  Denote by $R(J,D)$ the satellite
knot $(R(\alpha_J,J))(\alpha_D,D)$, which is shown in
Figure~\ref{figure:example-knot} in the introduction.  The following
observation will be useful in later parts: for the trivial knot $U$,
both $R(U,D)$ and $R(J,U)$ are slice.  A movie picture of a slice disk
in $D^4$, for instance for $R(U,D)$, is obtained by cutting the
1-handle of the obvious genus one Seifert surface along which $D$ is
tied.

\begin{figure}[H]
  \includestandalone{base-seed-knot}
  \caption{The knot $R=9_{46}$ and the curves $\alpha_J$ and $\alpha_D$}
  \label{figure:base-seed-knot}
\end{figure}

Let $D$ be the untwisted positive Whitehead double of the right-handed
trefoil.  This choice will be fixed throughout this paper.

On the other hand, in place of $J$, we will use knots $J^i_{n-1}$
($i=1,2,\ldots$) described below.  (Recall that $n\ge 2$ is fixed.)
We will use the following notation.  For a knot $J$, let
$\sigma_J(\omega)$
be the Levine-Tristram function defined for $\omega\in S^1$.  For a
positive integer $d$, denote the average of the evaluations of
$\sigma_J$ at the $d$th roots of unity by
\[
  \rho(J,\Z_d) := \frac1d \sum_{k=0}^{d-1} \sigma_J(e^{2\pi k\sqrt{-1}/d}).
\]

We start by choosing a knot $J^i_0$ and a prime $p_i$ for each
$i=1,2,\ldots$ satisfying the following:

\begin{enumerate}[label=({J\arabic*})]
\item\label{item:negativity-of-J_0} For each $i$, $J^i_0$ is
  $0$-negative.
\item\label{item:signature-of-J_0-large-enough} For each $i$,
  $|\rho(J^i_0,\Z_{p_i})| > 69\,713\,280\cdot (6n+90)$.
\item\label{item:independence-of-signature-of-J_0} For $i<j$,
  $\rho(J^j_0,\Z_{p_i})=0$.
\end{enumerate}

An explicit construction of a sequence $\{(J^i_0, p_i)\}$ satisfying
\ref{item:negativity-of-J_0}, \ref{item:signature-of-J_0-large-enough}
and \ref{item:independence-of-signature-of-J_0} will be given in
Section~\ref{section:signature-realization-by-negative-knots}\@.  In
this section, we will use \ref{item:negativity-of-J_0} only.
The other conditions \ref{item:signature-of-J_0-large-enough} and
\ref{item:independence-of-signature-of-J_0} will be used in
Section~\ref{section:using-cheeger-gromov}.

For $k=0,1,\ldots,n-2$, define $J^i_{k+1}:=P_k(\eta_k,J^i_k)$, where
$P_k$ is the stevedore knot and $\eta_k$ is the curve shown in
Figure~\ref{figure:stevedore-pattern}.  Although $(P_k,\eta_k)$
remains the same as $k$ varies, we will keep the index $k$ in the
notation since it will be useful to distinguish the occurrences in
distinct stages.

\begin{figure}[H]
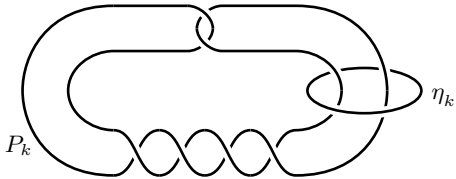

  \includestandalone{stevedore-pattern}
  \caption{The stevedore pattern $(P_k,\eta_k)$.}
  \label{figure:stevedore-pattern}
\end{figure}

Let $K_i$ be the satellite knot~$R(J^i_{n-1},D)$.  Each $K_i$ is
topologically slice since $D$ is topologically slice by the work of
Freedman~\cite{Freedman:1984-1}.  By the following lemma, $K_i$ lies
in~$\cT_n$.

\begin{lemma}
  \label{lemma:bipolarity-of-examples}
  Under the assumption that $J^i_0$ is $0$-negative, the knot $K_i$ is
  $n$-negative.  Also, $K_i$ is $k$-positive for every~$k$.
\end{lemma}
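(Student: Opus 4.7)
The plan is to construct an $n$-negaton for $M(K_i)$ and, separately, a $k$-positon for every $k\ge 0$, both via the standard 4-dimensional infection construction. I climb one derived level per satellite step. Starting from the $0$-negaton $V_0$ bounded by $M(J^i_0)$ which exists by hypothesis~\ref{item:negativity-of-J_0}, I would produce a $(k+1)$-negaton $V_{k+1}$ for $J^i_{k+1}=P_k(\eta_k,J^i_k)$ as follows. Since the stevedore $P_k$ is smoothly slice, fix a slice disk exterior $C_k\subset D^4$, so $\partial C_k=M(P_k)$, $H_*(C_k)\cong H_*(S^1)$ and $\pi_1(C_k)$ is normally generated by a meridian. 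From Figure~\ref{figure:stevedore-pattern} one reads off $\lk(\eta_k,P_k)=0$, so $\eta_k\in\pi_1(C_k)^{(1)}$. Then form $V_{k+1}$ by excising an open tubular neighborhood of $\eta_k$ from $C_k$ and an open tubular neighborhood of a meridian $\mu$ of $J^i_k$ from $V_k$, and gluing the two resulting torus boundaries by the homeomorphism identifying $\mu$ with the zero-framed longitude of $\eta_k$ and the meridian of $\eta_k$ with the zero-framed longitude of $J^i_k$. Routine Mayer--Vietoris and van Kampen arguments confirm $\partial V_{k+1}=M(J^i_{k+1})$, the $H_1$-isomorphism condition, the meridional normal generation, and transport the surface basis of $V_k$ to a basis of $H_2(V_{k+1})$ of self-intersection $-1$.

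The derived-depth jump is the crux. By van Kampen, the image of $\pi_1(V_k)$ in $\pi_1(V_{k+1})$ is generated by the image of its meridional normal generator $\mu$, which under the gluing equals the zero-framed longitude of $\eta_k$; since $\lk(\eta_k,P_k)=0$ this longitude lies in $\pi_1(C_k)^{(1)}\subseteq\pi_1(V_{k+1})^{(1)}$. As $\pi_1(V_{k+1})^{(1)}$ is normal, the entire image of $\pi_1(V_k)$ sits inside it, and therefore the image of the characteristic subgroup $\pi_1(V_k)^{(k)}$ lies in $\pi_1(V_{k+1})^{(k+1)}$. In particular the image of each $\pi_1(S_j)\subseteq\pi_1(V_k)^{(k)}$ lies in $\pi_1(V_{k+1})^{(k+1)}$, confirming that $V_{k+1}$ is a $(k+1)$-negaton. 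To produce the final $n$-negaton $W$ for $M(K_i)$, I would apply the same construction once more, using as the base a slice disk exterior of the slice knot $R(U,D)$ with infection curve $\alpha_J$; this curve lies in the first derived subgroup of the base, since it is a standard generator of $H_1$ of the genus-one Seifert surface of $R$, hence nullhomologous in $E_R$, and the $D$-infection at $\alpha_D$ does not affect its homology class in the complement.

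For $k$-positivity I would use the dual slice knot $R(J^i_{n-1},U)=R(\alpha_J,J^i_{n-1})$, together with the fact that $D$ is $k$-positive for every $k$. The latter follows from resolving the unique positive clasp of the obvious immersed disk bounded by $D$ in $D^4$: blowing up with $\ol{\C P^2}$ yields a smoothly embedded disk in $D^4 \csum \ol{\C P^2}$, and the complement of a tubular neighborhood is a 4-manifold $Y$ with $\partial Y=M(D)$ whose $H_2$ is generated by a single $2$-sphere of self-intersection $+1$ and trivial fundamental group. Since the sphere sits in $\pi_1(Y)^{(k)}$ for every $k$, $Y$ is simultaneously a $k$-positon for all $k$. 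Infecting a slice disk exterior of $R(J^i_{n-1},U)$ with $Y$ along $\alpha_D$---which lies in the first derived subgroup for the same Seifert-surface reason as $\alpha_J$---then yields, by the same van Kampen argument, a $k$-positon for $K_i$ for every $k$.

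The principal obstacle is the careful tracking of the derived series under the infection van Kampen pushout: I must verify that each of $\eta_k$, $\alpha_J$, $\alpha_D$ genuinely sits in the first derived subgroup of the relevant slice disk exterior (which reduces to checking linking number zero with the ambient knot), and that the surface bases with their self-intersection numbers and $\pi_1$-constraints transport correctly through each infection, with no spurious contribution to $H_2$ or $H_1$ from the gluing torus.
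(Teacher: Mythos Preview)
Your $n$-negativity argument is correct and is essentially the content of \cite[Proposition~3.3]{Cochran-Harvey-Horn:2012-1}, which the paper simply cites; you have reconstructed its proof. The derived-series bookkeeping (image of $\pi_1(V_k)$ lands in $\pi_1(V_{k+1})^{(1)}$, hence $\pi_1(V_k)^{(k)}$ lands in $\pi_1(V_{k+1})^{(k+1)}$) is exactly right.

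The $k$-positivity argument, however, has a genuine gap. First a sign inconsistency: $\ol{\C P^2}$ carries a sphere of self-intersection $-1$, not $+1$, so the sentence ``blowing up with $\ol{\C P^2}$ \ldots\ $H_2$ generated by a single $2$-sphere of self-intersection $+1$'' cannot be correct as written. More seriously, even after fixing the sign, the assertion that the $H_2$ generator of the disk complement $Y$ is represented by a \emph{sphere} is unjustified. In the blow-up, the exceptional sphere $E$ meets the proper transform $\Delta$ of the immersed disk in two points, so $E\not\subset Y$; producing a closed surface in $Y$ from $E$ requires tubing these intersections off along $\Delta$, which raises the genus. The resulting generator is typically a torus, and you then have no control over the image of its $\pi_1$ in $\pi_1(Y)^{(k)}$ for large~$k$. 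Thus the crossing-change/blow-up argument only delivers $0$-positivity of $D$, not $k$-positivity for all~$k$.

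The paper circumvents this by exploiting the satellite description $D=\Wh(\eta,T)$: since $\Wh$ is the unknot, $\pi_1(S^3\sm\Wh)\cong\Z$, and $\lk(\eta,\Wh)=0$ forces $[\eta]=1$ in this group; hence $[\eta]\in\pi_1(S^3\sm\Wh)^{(k)}$ for every~$k$. One infection with any $0$-positon for the trefoil $T$ then yields, via the same proposition you used for negativity, a $k$-positon for $D$ for every~$k$, with no need to control the genus of the basis surfaces. You should replace your blow-up argument with this one.
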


\begin{proof}
  We will use the following two facts. (i) If $P$ is slice, $J$ is
  $n$-negative and $[\eta]\in \pi_1(S^3\sm P)^{(k)}$, then $P(\eta,J)$
  is
  $(n+k)$-negative~\cite[Proposition~3.3]{Cochran-Harvey-Horn:2012-1}.
  This holds when ``negative'' is replaced by ``positive'' or by
  ``bipolar.''  (ii) A knot is $0$-positive if it can be changed to a
  trivial knot by changing positive crossings to negative crossings
  \cite[Proposition~3.1]{Cochran-Harvey-Horn:2012-1},
  \cite[Lemma~3.4]{Cochran-Lickorish:1986-1}.
 
  In our case, observe that the stevedore knot $P_k$ is slice and
  $[\eta_k]$ lies in $\pi_1(S^3\sm P_k)^{(1)}$.  Since $J^i_0$ is
  $0$-negative, $J^i_k$ is $k$-negative for $k=0,1,\ldots$ by
  induction using~(i).  Since $R(U,D)$ is slice and
  $[\alpha_J] \in \pi_1(S^3\sm R(U,D))^{(1)}$, it follows that
  $K_i = (R(U,D))(\alpha_J,J^i_{n-1})$ is $n$-negative once again
  by~(i).

  Let $T$ be the right-handed trefoil.  It is $0$-positive by~(ii).
  The knot $D$ can be viewed as a satellite knot $\Wh(\eta,T)$ where
  $(\Wh,\eta)$ is the pattern shown in
  Figure~\ref{figure:whitehead-pattern}.  Since $[\eta]$ is trivial in
  $\pi_1(S^3\sm\Wh)=\Z$, it follows that $D$ is $k$-positive for all
  $k$ by~(i). Therefore, by~(i), $K_i=(R(J_{n-1}^i,U))(\alpha_D,D)$
  is $k$-positive for all $k$, since $R(J_{n-1}^i,U)$ is slice.
\end{proof}

\begin{figure}[H]
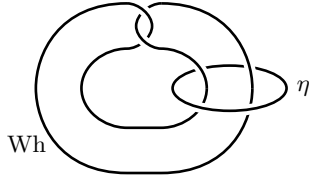

  \includestandalone{whitehead-pattern}
  \caption{The Whitehead pattern $(\Wh,\eta)$.}
  \label{figure:whitehead-pattern}
\end{figure}

\subsection{A negaton from a linear combination and metabolizers}
\label{subsection:first-step-of-proof}

Suppose that a nontrivial finite linear combination
$K=\#_{i=1}^r a_i K_i$ ($a_i\in \Z$) of the knots $K_i$ is
$(n+1)$-bipolar.  By eliminating terms with $a_i=0$ and by replacing
$K$ by $-K$ if necessary, we may assume that $a_1\ge 1$ and $a_i\ne 0$
for each~$i$.  Our strategy is to derive a contradiction by
investigating consequences \emph{on the first knot $K_1$} which are
implied by the hypothesis on the linear combination~$K$.

For this purpose, we will construct a specific $n$-negaton for $K_1$,
from a given $(n+1)$-negation for $K$, by attaching additional
``negative'' pieces.  (Principle: negative + negative = negative.)  It
will be guided by the observation that $K_1$ is concordant to the
connected sum of $K$, $-(a_1-1)K_1$ and $-a_iK_i$ ($i>1$), where
the summands added to $K$ are $n$-negative regardless of the sign of
$a_i$, by Lemma~\ref{lemma:bipolarity-of-examples}.

The actual construction proceeds as follows.  Let $V^-$ be an
$(n+1)$-negaton bounded by~$M(K)$.  For each $i$, if $a_i>0$, choose
an $n$-negaton bounded by $-M(K_i)=M(-K_i)$ by invoking
Lemma~\ref{lemma:bipolarity-of-examples} and call it~$Z^-_i$.  If
$a_i<0$, choose an $n$-negaton bounded by $M(K_i)$ again by using
Lemma~\ref{lemma:bipolarity-of-examples} and call it~$Z^-_i$.  Indeed,
we will use a specific choice of $Z_i^-$ later in
Section~\ref{subsection:estimate-cheeger-gromov-invariant} (see
Lemma~\ref{lemma:specific-choice-of-n-negaton}), but for now it
suffices to assume that $Z^-_i$ is just an $n$-negaton for~$\pm K_i$.
Recall that there is a standard cobordism, which we call $C$, bounded
by the union of $\partial_-C := -M(K)$ and
$\partial_+C := \bigsqcup_{i=1}^r a_i M(K_i)$.  It is obtained by
attaching, to $\bigsqcup_{i=1}^r a_i M(K_i) \times I$, $(N-1)$
1-handles that makes it connected and attaching $(N-1)$ 2-handles that
makes meridians of the involved knots parallel, where
$N=\sum_{i=1}^r |a_i|$.  See, for instance,
\cite[p.~113]{Cochran-Orr-Teichner:2002-1} for a detailed discussion
on~$C$.  Define
\begin{equation}
  \label{equation:definition-of-negaton}
  X^- := V^- \cupover{\partial_-C} C \cupover{\partial_+C}
  \Big((a_1-1)Z^-_1 \sqcup \bigsqcup_{i>1} |a_i| Z^-_i\Big).
\end{equation}
See the schematic diagram in Figure~\ref{figure:negaton-diagram}.

\begin{figure}[H]
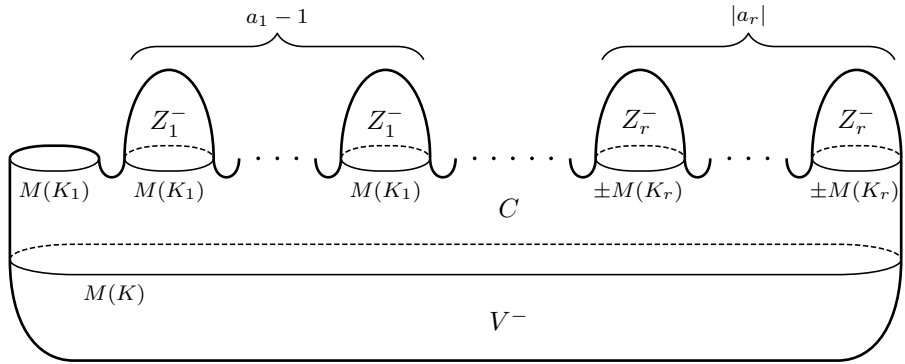

  \includestandalone{negaton-diagram}
  \caption{A schematic diagram of the negaton~$X^-$.  Here the sign of
    $M(K_i)$ is equal to the sign of~$a_i$.}
  \label{figure:negaton-diagram}
\end{figure}

\begin{lemma}\label{lemma:X^-isnegaton}
  The $4$-manifold $X^-$ is an $n$-negaton bounded by~$M(K_1)$.
\end{lemma}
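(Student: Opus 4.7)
The plan is to verify the three defining properties of an $n$-negaton in Definition~\ref{definition:positivity-negativity}, with the knot taken to be $K_1$ and the stage to be $n$. The boundary is immediate from the construction: the gluing along $M(K)$ cancels $\partial V^-$ with $\partial_- C$, and the gluing along $\partial_+ C$ consumes every copy of $\pm M(K_i)$ in $\partial_+ C$ except a single copy of $M(K_1)$, so $\partial X^- = M(K_1)$.

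For condition~(1), I would track the meridians. Each of $V^-$, each $Z_i^-$, and $C$ has $H_1\cong \Z$ generated by a meridian of the respective knot, and the $N-1$ two-handles of $C$ are attached along differences of these meridians. Thus in $X^-$ all these meridians are identified in $H_1$ (and are conjugate in $\pi_1$) with a single meridian~$\mu_{K_1}$, giving $H_1(X^-)=\Z$ with $M(K_1)\hookrightarrow X^-$ inducing an isomorphism; combined with the fact that each of $V^-$ and $Z_i^-$ is normally generated by its meridian, this also shows that $\pi_1(X^-)$ is normally generated by~$\mu_{K_1}$. Condition~(3) is then immediate from the functoriality of the derived series: every group homomorphism sends $G^{(k)}$ into $H^{(k)}$, so a surface $S$ from the $(n+1)$-negaton basis of $V^-$ satisfies $\pi_1(S)\subset \pi_1(V^-)^{(n+1)} \to \pi_1(X^-)^{(n+1)} \subset \pi_1(X^-)^{(n)}$, and a surface $S$ from an $n$-negaton $Z_i^-$ satisfies $\pi_1(S)\subset \pi_1(Z_i^-)^{(n)} \to \pi_1(X^-)^{(n)}$.

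The main work, and the principal obstacle, is condition~(2). My approach relies on the following \emph{pushoff observation}: for any negaton~$Z$, the inclusion induced map $H_2(\partial Z)\to H_2(Z)$ is zero. Indeed, a class in $H_2(\partial Z)$ is represented by a surface in $\partial Z$ which pushes into the collar to a parallel disjoint copy in $Z$, so it has self-intersection zero in $Z$; since the intersection form on $H_2(Z)$ is the diagonal $-I$ in the negaton basis and hence negative definite, the class vanishes. Applying this to $V^-$ and to each~$Z_i^-$, and running Mayer--Vietoris for the decomposition $X^- = (V^- \cup_{M(K)} C) \cup_{\partial_+ C} Z^-$, one needs to verify that all boundary-homology contributions (notably the Seifert surface classes in each boundary component of $C$) die in $X^-$, so that $H_2(X^-)$ reduces to $H_2(V^-) \oplus \bigoplus H_2(Z_i^-)$. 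The surface bases from the negatons, supported in disjoint constituents and remaining closed connected embedded with self-intersection $-1$ (a local invariant preserved under gluing), then combine to furnish the required basis of $H_2(X^-)$. The care required is in the bookkeeping through the Mayer--Vietoris sequences, to ensure that no stray $H_2$-class from the interior handle structure of $C$ survives to spoil this decomposition.
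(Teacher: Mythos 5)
Your proof takes the same basic approach as the paper's: verify each defining condition of an $n$-negaton via Mayer--Vietoris, exploiting the negaton structure of the pieces $V^-$ and $Z_i^-$. The ``pushoff observation'' you formulate --- that $H_2(\partial Z)\to H_2(Z)$ vanishes for a negaton $Z$, because its image consists of isotropic vectors in the free negative-definite lattice $H_2(Z)$ --- is correct and is implicitly what makes the paper's Mayer--Vietoris computation go through on the negaton pieces, and the derivation of condition~(3) from functoriality of the derived series (together with the inclusion-induced $H_2$ isomorphism) is also the right reasoning.

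The gap is exactly in the piece you flag as needing ``bookkeeping'': the cobordism $C$. Your pushoff observation does not apply to $C$, which is nothing like a negaton; in fact the inclusion $H_2(\partial_+ C)\to H_2(C)$ is an \emph{isomorphism}, not zero. What makes the argument close is the specific homological structure of the standard cobordism, namely that $H_1(C)\cong\Z$ and $H_2(C)\cong\bigoplus_i H_2(M(K_i))^{|a_i|}$, with the second isomorphism induced by the inclusions of the components of $\partial_+ C$; this is the fact the paper quotes from \cite[p.~113]{Cochran-Orr-Teichner:2002-1}. Given it, the Seifert-surface class of each component of $\partial_+ C$ maps into the attached $Z_i^-$ and dies there by your pushoff observation (and the class of $\partial_- C=-M(K)$ dies in $V^-$), so the Mayer--Vietoris map absorbs all of $H_2(C)$, giving the inclusion-induced isomorphism
\[
  H_2(X^-)\;\cong\; H_2(V^-)\oplus H_2(Z_1^-)^{a_1-1}\oplus
  \bigoplus_{i>1}H_2(Z_i^-)^{|a_i|}.
\]
Without this input about $C$, your argument cannot rule out $H_2$-classes coming from $C$ surviving into $H_2(X^-)$ --- which is precisely the concern you raise at the end. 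So the missing ingredient is this concrete computation of $H_*(C)$, not mere routine bookkeeping.
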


\begin{proof}
  It is known that $H_1(C) \cong \Z$ and
  $H_2(C)\cong \bigoplus_i H_2(M(K_i))^{|a_i|}$ where the second
  isomorphism is induced by the inclusions (for instance see
  \cite[p.~113]{Cochran-Orr-Teichner:2002-1}).  Also, a meridian of
  any one of $K, K_1,\ldots,K_r$ normally generates $\pi_1(C)$ (and
  hence generates $H_1(C)$), because of the 2-handle attachments in
  the construction of~$C$.  Using this and
  Definition~\ref{definition:positivity-negativity}~(1) for the
  negatons $Z^-_i$ and $V^-$, the following is shown by a
  Mayer-Vietoris argument:
  \begin{align*}
    H_1(X^-) &\cong H_1(M(K_1)),
    \\
    H_2(X^-) &\cong H_2(V^-) \oplus H_2(Z^-_1)^{a_1-1} \oplus
    \bigg( \bigoplus_{i>1} H_2(Z^-_i)^{|a_i|} \bigg)
  \end{align*}
  where the isomorphisms are inclusion-induced.  From the $H_2$
  computation, it follows that
  Definition~\ref{definition:positivity-negativity}~(2) and (3) are
  satisfied for~$X^-$.  Since $\pi_1(C)$, $\pi_1(V^-)$ and
  $\pi_1(Z^-_i)$ are normally generated by meridians of $K_1$, $K$ and
  $K_i$ respectively, it follows that $\pi_1(X^-)$ is normally
  generated by a meridian of~$K_1$.  By this and the above $H_1$
  computation, Definition~\ref{definition:positivity-negativity}~(1)
  is satisfied.
\end{proof}

Recall that the Blanchfield form
\[
  \Bl\colon H_1(M(J);\Q[t^{\pm1}])\times H_1(M(J);\Q[t^{\pm1}])
  \to \Q(t)/\Q[t^{\pm1}]
\]
is defined on the (rational) Alexander module $H_1(M(J);\Q[t^{\pm1}])$
of a knot~$J$, and that a submodule $P$ of $H_1(M(J);\Q[t^{\pm1}])$ is
called a \emph{metabolizer} if $P=P^\perp$, where
\[
  P^\perp:=\{x\in H_1(M(J);\Q[t^{\pm1}]) \mid \Bl(x,P)=0\}.
\]

\begin{lemma}[A special case of
  {\cite[Theorem~5.8]{Cochran-Harvey-Horn:2012-1}}]
  \label{lemma:negaton-metabolizer}
  Let $V$ be a either $1$-negaton or $1$-positon bounded by $M(J)$,
  and let $P$ be the kernel of the inclusion-induced homomorphism
  $H_1(M(J);\Q[t^{\pm1}]) \to H_1(V;\Q[t^{\pm1}])$ on the Alexander
  modules.  Then $P$ is a metabolizer of the Blanchfield form.
\end{lemma}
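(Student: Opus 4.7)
The plan is to establish the two inclusions $P\subseteq P^\perp$ and $P^\perp\subseteq P$ separately. Write $\Lambda:=\Q[t^{\pm1}]$ and $M:=M(J)$, and denote by $\tilde V$, $\tilde M$ the infinite cyclic covers corresponding to the abelianizations. Condition~(1) of Definition~\ref{definition:positivity-negativity} ensures that $\tilde V$ restricts to $\tilde M$ on the boundary and that $H_1(V;\Lambda)$ is $\Lambda$-torsion, so $\Lambda$-coefficient homology of the pair $(V,M)$ is well-behaved.

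For $P\subseteq P^\perp$, I would compute the Blanchfield pairing geometrically using $V$ as a bounding $4$-manifold. Given $x,y\in P$, represent them by $1$-cycles that lift to $\tilde M$ and bound them by $2$-chains $C_x,C_y$ in $\tilde V$. Then $\Bl(x,y)$ is represented, modulo $\Lambda$, by a normalized equivariant intersection number of $C_x$ and $C_y$, with the ambiguity taking values in the image of the equivariant intersection form on $H_2(V;\Lambda)$. The hypothesis $\pi_1(S_i)\subset\pi_1(V)^{(1)}$ makes each generating surface $S_i$ lift to $\tilde V$, so $\{\tilde S_i\}$ is a $\Lambda$-basis of $H_2(V;\Lambda)$; since the $S_i$ are disjoint with $S_i\cdot S_i=\pm 1$, the $\Lambda$-intersection matrix is $\pm I$, which is unimodular over $\Lambda$. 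Unimodularity kills the ambiguity and gives $\Bl(x,y)=0$.

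For $P^\perp\subseteq P$, I would count orders in the PID $\Lambda$. The $\Lambda$-coefficient long exact sequence of the pair $(V,M)$ identifies $P$ with the cokernel of $j_*\colon H_2(V;\Lambda)\to H_2(V,M;\Lambda)$. Combining Poincar\'e--Lefschetz duality $H_2(V,M;\Lambda)\cong H^2(V;\Lambda)$ with the universal coefficient theorem and the unimodularity of the intersection form on $H_2(V;\Lambda)$ yields, after a diagram chase, an isomorphism $P\cong\Ext^1_\Lambda(H_1(V;\Lambda),\Lambda)$. Since orders satisfy $|\Ext^1_\Lambda(A,\Lambda)|=|A|$ for any torsion $\Lambda$-module $A$, combined with the exact sequence $0\to P\to H_1(M;\Lambda)\to H_1(V;\Lambda)\to 0$, I obtain $|P|^2=|H_1(M;\Lambda)|$. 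Since the Blanchfield form is nonsingular, $|P^\perp|\cdot|P|=|H_1(M;\Lambda)|$, so $|P^\perp|=|P|$; together with $P\subseteq P^\perp$, this forces $P=P^\perp$.

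The main obstacle will be the $\Lambda$-module analysis of $H_2(V;\Lambda)$: one must show that the lifts $\tilde S_i$ form a $\Lambda$-basis, with no $\Lambda$-torsion introduced by passing to the cover, and that the equivariant intersection form agrees on this basis with the ordinary intersection form of the $S_i$. This technical step underpins both halves of the argument; the derived-subgroup hypothesis $\pi_1(S_i)\subset\pi_1(V)^{(1)}$ is precisely what lets each $S_i$ lift as a single translate, and the fact that $H_1(V)=\Z$ keeps the cover tame enough for the standard calculations to close up. The fact that the positon case works identically, despite the sign flip in self-intersection, is because only unimodularity of the intersection form is used, not its signature.
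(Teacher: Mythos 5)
Your proposal takes a genuinely different route from the paper's. The paper does not argue directly from the negaton: following \cite{Cochran-Harvey-Horn:2012-1}, it connect-sums $V$ with $b_2(V)$ copies of $\C P^2$ (resp.\ $\overline{\C P^2}$ for a positon) to produce an \emph{integral $1$-solution} $V^0$ (\cite[Proposition~5.5]{Cochran-Harvey-Horn:2012-1}), observes that $\pi_1(V^0)\cong\pi_1(V)$ so the kernel $P$ is unchanged, and then quotes \cite[Theorem~4.4]{Cochran-Orr-Teichner:1999-1}, whose proof runs off the hyperbolic $\Lambda$-intersection form and its Lagrangian. You instead try to extract the metabolizer property directly from unimodularity of the definite form on the span of the lifts $\tilde S_i$. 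This is a reasonable and in some ways cleaner strategy, but it has two genuine gaps beyond the one you flag, and they are in fact the same gap. The short exact sequence $0\to P\to H_1(M;\Lambda)\to H_1(V;\Lambda)\to 0$ asserts surjectivity of $i_*$, equivalently $H_1(V,M;\Lambda)=0$; this is not part of Definition~\ref{definition:positivity-negativity} and is not automatic. By Poincar\'e--Lefschetz duality and the universal coefficient theorem, $\mathrm{Tors}\,H_2(V;\Lambda)\cong\Ext^1_\Lambda\bigl(H_1(V,M;\Lambda),\Lambda\bigr)$, so the $\Lambda$-torsion-freeness of $H_2(V;\Lambda)$ you identify as ``the main obstacle'' is \emph{equivalent} to the surjectivity you assumed without comment — you cannot use one to dispose of the other. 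Either this vanishing must be established for negatons, or the order count must be rerun allowing a nontrivial cokernel of $i_*$ (tracking the torsion summand through $j_*\colon H_2(V;\Lambda)\to H_2(V,M;\Lambda)$). Finally, the statement that $\Bl(x,y)$ is ``a normalized equivariant intersection number of $C_x$ and $C_y$ with ambiguity in the image of the intersection form'' is too loose to verify as written — the two chains are not cycles, and the Blanchfield form is $\Q(t)/\Lambda$-valued — and the rigorous version of this step is precisely the duality/long-exact-sequence algebra that you defer; making it precise is where the real work of a direct proof lies.
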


Returning to our case, let
\[
  P:= \Ker \big\{ H_1(M(K_1);\Q[t^{\pm1}]) \to H_1(X^-;\Q[t^{\pm1}])
  \big\}.
\] 

We need the following facts, which can be verified by a routine
computation, for instance using the Seifert matrix $\sbmatrix{0 & 2\\
  1 & 0}$ of~$K_1$.  Regard (zero linking parallels of) the curves
$\alpha_J$ and $\alpha_D$ in Figure~\ref{figure:base-seed-knot} as
curves in the zero surgery manifold of $K_1=R(J^1_{n-1},D)$, and
denote them by $\alpha_J,\alpha_D \subset M(K_1)$, for brevity.  Then,
$H_1(M(K_1);\Q[t^{\pm1}])$ is the internal direct sum of two cyclic
submodules $\langle\alpha_J\rangle$ and $\langle\alpha_D\rangle$
generated by the classes of $\alpha_J$ and $\alpha_D$ respectively,
and in addition, $\langle\alpha_J\rangle \cong \Q[t^{\pm1}]/(t-2)$ and
$\langle\alpha_D\rangle \cong \Q[t^{\pm1}]/(2t-1)$.  The Blanchfield
form of $K_1$ is given by
$\Bl(\alpha_J,\alpha_J) = \Bl(\alpha_D,\alpha_D) = 0$,
$\Bl(\alpha_J,\alpha_D) = \frac{t-1}{1-2t}$.  For later use in
Section~\ref{section:using-d-invariant}, we remark that the same
conclusion holds when $\Z$ is used as coefficients in place of~$\Q$.

From the above paragraph, it follows that $P$ is equal to either
$\langle\alpha_D\rangle$ or~$\langle\alpha_J\rangle$, since $P$ is a
metabolizer by Lemma~\ref{lemma:negaton-metabolizer}.

\begin{case-named}[Case 1: $P=\langle\alpha_D\rangle$] In this case,
  we will use the Cheeger-Gromov $L^2$ $\rho$-invariants to derive a
  contradiction.  The proof is given in
  Section~\ref{section:using-cheeger-gromov}.
\end{case-named}

\begin{case-named}[Case 2: $P=\langle\alpha_J\rangle$]
  In this case, to derive a contradiction, we will use the Heegaard
  Floer correction term $d$-invariants of infinitely many branched
  covers of~$K_1$.  The proof is given in
  Section~\ref{section:using-d-invariant}.
\end{case-named}

The proof of Theorem~\ref{theorem:main-nontriviality-result} will be
finished by completing the above two cases.

\section{Case 1: Use of Cheeger-Gromov invariants}
\label{section:using-cheeger-gromov}

The goal of this section is to reach a contradiction in Case~1
described above.  Suppose $P=\langle\alpha_D\rangle$ throughout this
section.

Our key ingredient is the amenable signature theorem developed in
\cite{Cha-Orr:2009-1,Cha:2010-1}.  We use it to extract obstructions,
from Cheeger-Gromov invariants over locally $p$-indicable amenable
groups.  For this purpose, we begin by converting the negatons
described in Section~\ref{subsection:first-step-of-proof} to
4-manifolds called \emph{integral solutions} in~\cite{Cha:2010-1}.
After that, we analyze the behavior of a commutator series of the
fundamental group, and investigate Cheeger-Gromov invariants over the
associated quotient, by applying the ideas and methods used
in~\cite[Sections 4 and~5]{Cha:2010-1}.  We remark that this type of
technique is strongly influenced by earlier work of Cochran, Harvey
and Leidy~\cite{Cochran-Harvey-Leidy:2009-1}.

\subsection{A 4-manifold and analysis of mixed-type commutator series}
\label{subsection:manifold-construction-and-mixed-type-commutator-analysis}

Recall from \eqref{equation:definition-of-negaton} that $X^-$ is the
union of $V^-$, $C$, $(a_1-1)Z^-_1$ and $|a_i| Z^-_i$ ($i>1$).  Let
$V^0 = V^- \csum (b_2(V^-) \C P^2)$ and
$Z^0_i = Z^-_i \csum (b_2(Z^-_i) \C P^2)$.  Then $V^0$ is an integral
$(n+1)$-solution in the sense of~\cite[Definition~3.1]{Cha:2010-1},
and each $Z^0_i$ is an integral $n$-solution too, by
\cite[Proposition~5.5]{Cochran-Harvey-Horn:2012-1}.  Since we do not
directly use the definition of an integral $n$-solution, we do not
spell it out but we will state some properties when we need to use
them.  We remark that the definition of an integral $n$-solution does
not require a spin structure (and so there is no condition on the self-intersection $\mu$), in contrast to the definition of an 
$n$-solution defined in~\cite{Cochran-Orr-Teichner:1999-1}.

Let
\begin{equation}
  \label{equation:definition-of-solution}
  X^0 := V^0 \cupover{\partial_-C} C \cupover{\partial_+C}
  \Big((a_1-1)Z^0_1 \sqcup \bigsqcup_{i>1} |a_i| Z^0_i\Big).
\end{equation}
The manifold $X^0$ is bounded by $M(K_1)$.  From the hypothesis that
$P=\langle\alpha_D\rangle$, it follows that the kernel of
$H_1(M(K_1);\Q[t^{\pm1}])\to H_1(X^0;\Q[t^{\pm1}])$ is still equal to
$\langle\alpha_D\rangle$, since $\pi_1(X^0)\cong \pi_1(X^-)$.  In
fact, as done in \cite{Cochran-Harvey-Horn:2012-1}, this leads us to a
proof of Lemma~\ref{lemma:negaton-metabolizer}, since it is known that
if $W$ is an integral $n$-solution with $n\ge 1$ for a knot $J$, then
the kernel of $H_1(M(J);\Q[t^{\pm1}]) \to H_1(W;\Q[t^{\pm1}])$ is a
metabolizer~\cite[Theorem~4.4]{Cochran-Orr-Teichner:1999-1}.

Attach more pieces to $X^0$ as follows.  For the satellite knot
$J^1_{k+1} = P_k(\eta_k, J^1_k)$, there is a standard cobordism, say
$E_k$, from $M(J^1_{k+1})$ to $M(J^1_k)\sqcup M(P_k)$ for
$k=0,\ldots,n-2$: take the union of $M(J^1_k)\times I$ and
$M(P_k)\times I$, and identify the solid torus
$\overline{M(J^1_k)\sm E(J^1_k)}\times 1$ with a tubular neighborhood
of
$\eta_k\subset P_k\times
1$~\cite[p.~1429]{Cochran-Harvey-Leidy:2009-1}.  The same construction
gives a standard cobordism $E_{n-1}$ from $M(K_1)$ to
$M(J^1_{n-1})\sqcup M(R(U,D))$.  Define
\[
  X := X^0 \cupover{M(K_1)} E_{n-1} \cupover{M(J^1_{n-1})} E_{n-2}
  \cupover{M(J^1_{n-2})} \cdots \cupover{M(J^1_{1})} E_{0}.
\]
See the schematic diagram in
Figure~\ref{figure:higher-cobordism-diagram}.

\begin{figure}[t]
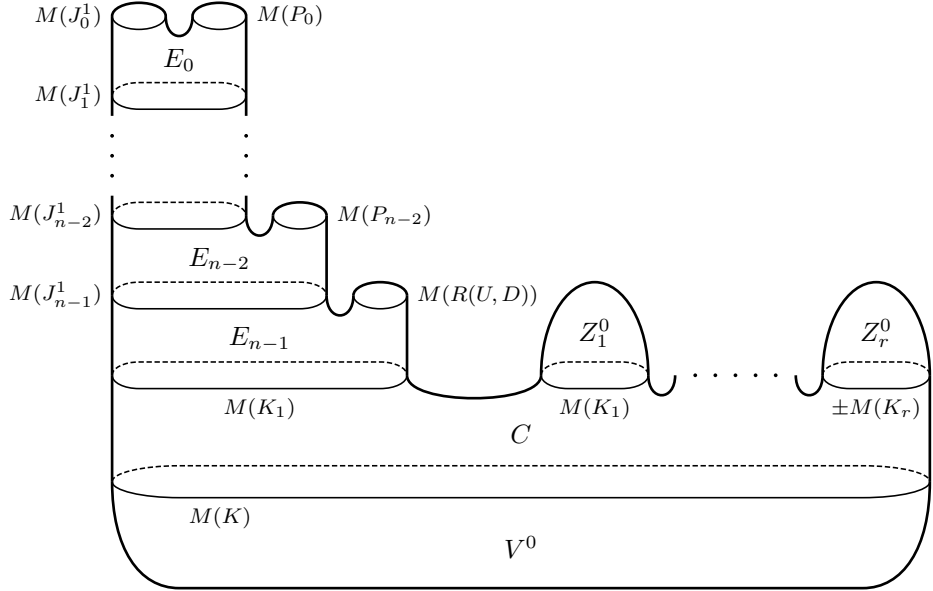

  \includestandalone{higher-cobordism-diagram}
  \caption{A schematic diagram of the 4-manifold~$X$.}
  \label{figure:higher-cobordism-diagram}
\end{figure}

We will compute the $\rhot$-invariant of $\partial X$, which is
associated to groups obtained from a certain mixed-coefficient
commutator series construction, as first done in~\cite{Cha:2010-1}.
The series is defined as follows.  Let $p=p_1$, which is the prime
associated to~$J^1_0$ (see the conditions
\ref{item:negativity-of-J_0}, \ref{item:signature-of-J_0-large-enough}
and~\ref{item:independence-of-signature-of-J_0} in
Section~\ref{subsection:construction-of-examples}).  Let $R_i=\Q$ for
$i=0,\ldots,n-1$, and let $R_n=\Z_p$.  For a group $G$, define
$\cP^0 G:=G$ and
\[
  \cP^{k+1}G = \Ker\Big\{ \cP^{k}G \to
    \frac{\cP^{k}G}{[\cP^{k}G,\cP^{k}G]} \to
    \frac{\cP^{k}G}{[\cP^{k}G,\cP^{k}G]} \otimesover{\Z} R_k \Big\}
\]
for $k=0,\ldots,n$ inductively.  For the use in
\eqref{equation:amenable-signature-vanishing} (see
Section~\ref{subsection:estimate-cheeger-gromov-invariant}), we note
that the quotient $\Gamma=G/\cP^{n+1} G$ satisfies
$\Gamma^{(n+1)}=\{1\}$, and that $\Gamma$ is amenable and locally
$p$-indicable by~\cite[Lemma~6.8]{Cha-Orr:2009-1}.  Here, a group is
\emph{amenable} if it admits a finitely additive invariant mean, and
is \emph{locally $p$-indicable} if each nontrivial finitely generated
subgroup admits an epimorphism onto~$\Z_p$.  (In this paper we will
not use these definitions directly.)

\begin{remark}
  \label{remark:strebel-and-local-indicability}
  In place of local $p$-indicability, Strebel's class $D(\Z_p)$
  \cite{Strebel:1974-1} was used in statements in
  \cite{Cha-Orr:2009-1} and subsequent papers.  Indeed, a group is
  locally $p$-indicable if and only if it lies in $D(\Z_p)$, due
  to~\cite{Howie-Schneebeli:1983-1}.
\end{remark}

Let $\phi\colon \pi_1(X)\to \pi_1(X)/\cP^{n+1} \pi_1(X)$ be the quotient
homomorphism.  Let $\mu_{J^1_0}$ be the meridian of $J^1_0$, and
regard it as a curve in $M(J^1_0) \subset \partial X \subset X$.

The following is the key part which makes use of the defining
condition of Case~1.

\begin{lemma}
  \label{lemma:nontriviality-of-representation}
  Under the assumption of Case~1 that $P=\langle \alpha_D \rangle$,
  the element $\phi(\mu_{J^1_0})$ is nontrivial and lies in the
  subgroup $\cP^{n} \pi_1(X) / \cP^{n+1} \pi_1(X)$.  In addition,
  $\phi(\mu_{J^1_0})$ has order~$p$.
\end{lemma}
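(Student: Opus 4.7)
The plan is to trace the meridian $\mu_{J^1_0}$ through the tower of standard satellite cobordisms $E_0,\ldots,E_{n-1}$ inside $X$, following the mixed-coefficient commutator-series analysis of~\cite{Cha:2010-1} together with the higher-order Alexander module techniques of~\cite{Cochran-Harvey-Leidy:2009-1}. The strategy splits naturally into the depth claim $\mu_{J^1_0}\in\cP^n\pi_1(X)$ and the nontriviality claim $\mu_{J^1_0}\notin\cP^{n+1}\pi_1(X)$; together with the fact that $\cP^n\pi_1(X)/\cP^{n+1}\pi_1(X)$ is a $\Z_p$-module (since $R_n=\Z_p$), they also yield the order $p$ statement.

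For the depth claim it suffices to show $\mu_{J^1_0}\in \pi_1(X)^{(n)}$, since the containment $[\cP^kG,\cP^kG]\subset\cP^{k+1}G$ built into the definition of $\cP$ yields $G^{(k)}\subset\cP^kG$ by induction. In $\pi_1(E_k)$, the satellite gluing identifies the meridian $\mu_{J^1_k}$ with a longitude of $\eta_k$, freely homotopic to the loop $\eta_k\in\pi_1(S^3\sm P_k)^{(1)}$. Hence $\mu_{J^1_k}$ is a commutator of elements of $\pi_1(M(P_k))$, each of which is a product of conjugates of $\mu_{P_k}=\mu_{J^1_{k+1}}$. Iterating this over $k=0,1,\ldots,n-1$ (with the top step using that $\mu_{J^1_{n-1}}=[\alpha_J]$ sits in $\pi_1(M(R(U,D)))^{(1)}\subset\pi_1(E_{n-1})^{(1)}$, with words in conjugates of $\mu_{K_1}$), the meridian $\mu_{J^1_0}$ becomes an $n$-fold iterated commutator in $\pi_1(X)$, as desired.

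For nontriviality, the Case~1 hypothesis $P=\langle\alpha_D\rangle$ says that $\alpha_J$ has nontrivial image in the rational Alexander module $H_1(X^0;\Q[t^{\pm1}])$. Via the identification $\alpha_J=\mu_{J^1_{n-1}}$ in $\pi_1(E_{n-1})$, this already says that $\mu_{J^1_{n-1}}$ survives nontrivially in the first-level quotient $\cP^1\pi_1(X)/\cP^2\pi_1(X)$ with $\Q$-coefficients. I will then propagate the nontriviality downward through $E_{n-2},\ldots,E_1$ by induction on $k$, at each step using that the satellite cobordism is a rational $1$-solution and that $[\eta_k]$ is a primitive element in the rational Alexander module of the stevedore pattern $P_k$; this is the direct analogue in our setting of the higher-order Alexander module injectivity arguments of~\cite{Cochran-Harvey-Leidy:2009-1,Cha:2010-1}. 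At the final step from $\mu_{J^1_1}$ to $\mu_{J^1_0}$ the coefficient ring is $R_n=\Z_p$ instead of $\Q$; because $\mu_{J^1_0}$ generates $H_1(M(J^1_0))=\Z$, its image in the $\Z$-module $(\cP^n\pi_1(X))^{ab}$ is a free summand after the appropriate localization, and so its image in $(\cP^n\pi_1(X))^{ab}\otimes\Z_p$ is a nonzero element of order exactly $p$.

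The main obstacle will be the inductive injectivity step: one must verify that at each level $k$ the inclusion-induced map on higher-order Alexander modules (localized over the group ring of $\pi_1(X)/\cP^k\pi_1(X)$) sends $\mu_{J^1_k}$ to a nonzero class. This requires controlling how the additional negaton pieces built into $X^0$ interact with the satellite tower, and relies crucially on the primitivity of $[\eta_k]$ in the rational Alexander module of the stevedore $P_k$ at each intermediate level and of $[\alpha_J]$ in that of $R=9_{46}$ at the top level.
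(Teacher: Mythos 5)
Your proposal follows essentially the same route as the paper: the depth claim $\mu_{J^1_0}\in\pi_1(X)^{(n)}\subset\cP^n\pi_1(X)$ by reverse induction through the satellite cobordisms $E_k$ (using that $\mu_{J^1_k}$ is parallel to $\eta_k\in\pi_1(M(P_k))^{(1)}$ and that $\pi_1(M(P_k))$ is normally generated by $\mu_{J^1_{k+1}}$); the order-$p$ claim from the fact that $\cP^n/\cP^{n+1}$ is a $\Z_p$-vector space; and the nontriviality by the higher-order Alexander module injectivity induction of Cochran--Harvey--Leidy and Cha, with the role of the Case~1 hypothesis being precisely to guarantee $\alpha_J\notin P$ so the induction can start. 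The paper is more precise in that it points directly to \cite[Theorem~4.14 and Section~5]{Cha:2010-1} as carrying over verbatim once $\alpha_J\notin P$ is known, whereas you leave the inductive injectivity step as an acknowledged ``main obstacle''; also, your final justification of ``order exactly $p$'' via a purported free summand of $(\cP^n\pi_1(X))^{ab}$ after localization is not needed and not justified --- once nontriviality of the image in $\cP^n/\cP^{n+1}$ is established, the order-$p$ statement is immediate because that quotient is a $\Z_p$-vector space.
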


\begin{proof}[Outline of the proof]
  
  Except the nontriviality of $\phi(\mu_{J^1_0})$, the assertions in
  Lemma~\ref{lemma:nontriviality-of-representation} are shown
  straightforwardly.  Indeed, reverse induction on
  $k=n-1,n-2,\ldots,0$ shows that $\mu_{J^1_k}$ lies in
  $\pi_1(X)^{(n-k)} \subset \cP^{n-k}\pi_1(X)$; use that $\mu_{J^1_k}$
  is parallel to the satellite curve $\eta_k$ which lies in
  $\pi_1(M(P_k))^{(1)}$, and that the meridian of $P_k$, which
  normally generates $\pi_1(M(P_k))$, is homotopic to
  $\mu_{J^1_{k+1}}$ in~$X$.  Also, the order assertion in
  Lemma~\ref{lemma:nontriviality-of-representation} follows from the
  nontriviality, since $\cP^n \pi_1(X) / \cP^{n+1} \pi_1(X)$ is isomorphic
  to a direct sum of (possibly infinitely many) copies of $\Z_p$, by
  the definition of the mixed coefficient commutator series.

  The nontriviality of $\phi(\mu_{J^1_0})$ in
  Lemma~\ref{lemma:nontriviality-of-representation} is proven by
  exactly the same argument as the proof of Theorem~4.14
  in~\cite{Cha:2010-1}, which is given in Section~5
  of~\cite{Cha:2010-1}.  So, instead of presenting full details, we
  will discuss the key difference in our case, focusing on the role of
  the hypothesis $P=\langle\alpha_D\rangle$ in Case 1.

  Theorem~4.14 in~\cite{Cha:2010-1} gives the desired nontriviality
  for another 4-manifold, denoted by $W_0$ in~\cite{Cha:2010-1},
  instead of our~$X$.  The manifold $W_0$ in~\cite{Cha:2010-1} is
  constructed in the exactly same way as $X$, but using a different
  knot (indeed the stevedore knot) in place of our $R(U,D)$, which is
  the pattern used to produce $K_1$ from the companion~$J^1_{n-1}$.
  This different choice in~\cite{Cha:2010-1} automatically provides
  the property that the satellite curve used to produce $K_1$, which
  is the analogue of $\alpha_J$ in Figure~\ref{figure:base-seed-knot}
  in our case, does not lie in the kernel $P$ of the homomorphism
  $H_1(M(K_1);\Q[t^{\pm1}]) \to H_1(X^0;\Q[t^{\pm1}])$.  (This
  property is used in lines 1--5 on page 4801 in~\cite{Cha:2010-1}.)
  In our case, $\alpha_J\not\in P$ is guaranteed by the hypothesis
  $P=\langle \alpha_D \rangle$ of Case~1.  This enables us to carry
  out all the arguments as in the proof of Theorem~4.14 given in
  \cite[Section~5]{Cha:2010-1}, to prove the nontriviality
  of~$\phi(\mu_{J^1_0})$.
\end{proof}

\subsection{Estimating Cheeger-Gromov invariants}
\label{subsection:estimate-cheeger-gromov-invariant}

Now we estimate the Cheeger-Gromov invariant over the mixed-type
commutator series quotient~$\pi_1(X)/\cP^{n+1}\pi_1(X)$.  For the
reader's convenience, we describe some known key results on the
Cheeger-Gromov invariants employed in our argument.

\begin{definition-named}[$L^2$-signature defect interpretation]
  Suppose $M$ is a 3-manifold bounding a 4-manifold $W$ and
  $\phi\colon \pi_1(M)\to G$ is a homomorphism factoring
  through~$\pi_1(W)$.  Then the Cheeger-Gromov invariant
  $\rhot(M,\phi)$ is equal to the $L^2$-signature defect of $W$ over
  $G$, which we denote by~$\osigmat_G(W)$.  That is,
  \begin{equation}
    \label{equation:l2sign-interpretation}
    \rhot(M,\phi) = \osigmat_G(W) := \sign^{(2)}_G(W) -\sign(W)
  \end{equation}
  where $\sign^{(2)}_G(W)$ is the $L^2$-signature of the intersection
  form
  \[
    H_2(W;\N G) \times H_2(W;\N G) \to \N G
  \]
  with the group von Neumann algebra $\N G$ as coefficients, and
  $\sign(W)$ is the ordinary signature of~$W$.  For more about this,
  see, for instance, \cite{Chang-Weinberger:2003-1},
  \cite[Section~2]{Cochran-Teichner:2003-1},
  \cite[Section~3]{Harvey:2006-1}, \cite[Section~2.1]{Cha:2014-1}.
\end{definition-named}

\begin{definition-named}[Quantitative universal bound (a special case)]
  If $K$ has a planar diagram with $c$ crossings, then for every
  homomorphism $\phi$ of~$\pi_1(M(K))$,
  \begin{equation}
    \label{equation:quantitative-bound}
    |\rhot(M(K),\phi)| \le 69\,713\,280 \cdot c.
  \end{equation}
  This is a special case of~\cite[Theorem~1.9]{Cha:2014-1}. 
\end{definition-named}

\begin{definition-named}[Amenable signature theorem (a special case)]
  Suppose $W$ is an integral $(n.5)$-solution bounded by $M(J)$, $G$
  is a locally $p$-indicable amenable group satisfying
  $G^{(n+1)}=\{1\}$, and $\phi\colon \pi_1(M(J))\to G$ is a
  homomorphism which factors through $\pi_1(W)$ and takes a meridian
  to an infinite order element.  Then
  \begin{equation}
    \label{equation:amenable-signature-vanishing}
    \rhot(M(J),\phi)=\osigmat_G(W)=0.  
  \end{equation}
  This is a special case of~\cite[Theorem~3.2]{Cha:2010-1}, whose
  proof relies on~\cite{Cha-Orr:2009-1}.  (See also
  Remark~\ref{remark:strebel-and-local-indicability}.)  We remark that
  the amenable signature theorem is a generalization of a major result
  in~\cite{Cochran-Orr-Teichner:1999-1}.
\end{definition-named}

Also, the following explicit computation is useful for our purpose.

\begin{definition-named}[Computation for knots over a finite cyclic
  group]
  If $\phi\colon \pi_1(M(J)) \to G$ is a homomorphism with finite
  cyclic image of order $d$, then
  \begin{equation}
    \label{equation:abelian-computation}
    \rhot(M(J),\phi) = \rho(J,\Z_d) := \frac1d \sum_{k=0}^{d-1}
    \sigma_{J}(e^{2\pi k\sqrt{-1}/d}).
  \end{equation}
  A proof can be found in \cite[Corollary~4.3]{Friedl:2003-5},
  \cite[Lemma~8.7]{Cha-Orr:2009-1}.
\end{definition-named}
  
Returning to our case, let
$\phi\colon \pi_1(X) \to G:=\pi_1(X)/\cP^{n+1}\pi_1(X)$ be the
projection.  For brevity, for a subspace $A$ of $X$, denote the
restriction of $\phi$ on $\pi_1(A)$ by~$\phi$.  By applying the
$L^2$-signature defect
interpretation~\eqref{equation:l2sign-interpretation} and the Novikov
additivity to the 4-manifold $X$, we obtain the following:

\begin{equation}
  \label{equation:L2-signature-of-X}
  \begin{aligned}
    &\rhot(M(J^1_0),\phi) + \sum_{k=0}^{n-2} \rhot(M(P_k),\phi) +
    \rhot(M(R(U,D)),\phi)
    \\
    &\qquad = \osigmat_G(X) = \osigmat_G(V^0) + \osigmat_G(C) +
    \sum_{k=0}^{n-1} \osigmat_G(E_k) + \sum_{i,j}
    \osigmat_G(Z^0_{i,j}),
  \end{aligned}
\end{equation}
where $Z^0_{i,j}$ designates the $j$th copy of $Z^0_i$ used in the
construction of $X$.  (We use this notation just because each copy of
$Z^0_i$ may contribute different $L^2$-signature defect.)

Since $\phi$ restricted on $\pi_1(M(J^1_0))$ is onto a subgroup
isomorphic to $\Z_p$ (see Lemma~\ref{lemma:nontriviality-of-representation}),
\begin{equation}
  \label{equation:computation-of-rho-for-J0}
  \rhot(M(J^1_0),\phi) = \rho(J^1_0,\Z_p) 
\end{equation}
by~\eqref{equation:abelian-computation}.  By the quantitative
universal bound~\eqref{equation:quantitative-bound},
\begin{equation}
  \label{equation:universal-bound-computation-1}
  \big|\rhot\big(M(P_k),\phi\big)\big| \le 6\cdot 69\,713\,280 
\end{equation}
since the stevedore knot $P_k$ has 6 crossings.  Similarly,
by~\eqref{equation:quantitative-bound},
\begin{equation}
  \label{equation:universal-bound-computation-2}
  \big|\rhot\big(M(R(U,D)),\phi\big)\big| \le 96\cdot 69\,713\,280 
\end{equation}
since $R(U,D)$ has a diagram with 96 crossings.  Recall that $V^0$ is
an integral $(n+1)$-solution, $G$ is locally $p$-indicable and
$G^{(n+1)}=\{1\}$.  Since the meridian of $K$ generates
$H_1(X)\cong \Z$ onto which $G$ surjects, the meridian of $K$ has
infinite order in~$G$.  So we have
\begin{equation}
  \label{equation:L2-signature-for-V0}
  \osigmat_G(V^0) = \rhot(M(K),\phi)=0  
\end{equation}
by the amenable signature
theorem~\eqref{equation:amenable-signature-vanishing}.

Recall that $Z^0_{i,j}= Z^-_{i,j} \# (b_2(Z^-_{i,j})\C P^2)$, where
$Z^-_{i,j}$ has been assumed to be an arbitrary $n$-negaton for
$\pm K_i$.  To control $\osigmat_G(Z^0_{i,j})$, we use a specific
choice of~$Z^0_{i,j}$ described below.

\begin{lemma}
  \label{lemma:specific-choice-of-n-negaton}
  Under our assumption that $J^i_0$ is a $0$-negative knot, there
  exists an $n$-negaton $Z^-_{i,j}$ for $\pm K_i$ satisfying that
  $\osigmat_G(Z^0_{i,j})$ is equal to either zero or
  $\rho(J^i_0,\Z_p)$.
\end{lemma}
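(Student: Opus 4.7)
The plan is to build $Z^-_{i,j}$ from the same tower of standard satellite cobordisms used in constructing $X$ in Section~\ref{subsection:manifold-construction-and-mixed-type-commutator-analysis}, capped at the bottom by a $0$-negaton for $\pm J^i_0$ and along the intermediate slice-knot boundaries by smooth slice-disk exteriors. Concretely, pick a $0$-negaton $V^-_{i,0}$ for $\pm J^i_0$ (the sign matching that of $\pm K_i$), which exists by hypothesis. Since each $P_k$ and $R(U,D)$ is smoothly slice, pick smooth slice-disk exteriors $W^{i,j}_{P_k}$ and $W^{i,j}_R$; these are integral $m$-solutions for every $m$. For $k=0,\ldots,n-2$ let $E^{i,j}_k$ be the standard satellite cobordism from $M(\pm J^i_{k+1})$ to $M(\pm J^i_k)\sqcup M(\pm P_k)$, and let $E^{i,j}_{n-1}$ be the standard satellite cobordism from $M(\pm K_i)$ to $M(\pm J^i_{n-1})\sqcup M(\pm R(U,D))$. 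Set
\[
  Z^-_{i,j}\;=\;V^-_{i,0}\,\cup\, E^{i,j}_0\,\cup\, W^{i,j}_{P_0}\,\cup\, E^{i,j}_1\,\cup\,\cdots\,\cup\, E^{i,j}_{n-2}\,\cup\, W^{i,j}_{P_{n-2}}\,\cup\, E^{i,j}_{n-1}\,\cup\, W^{i,j}_R.
\]
An inductive argument on $k$ exactly parallel to the proof of Lemma~\ref{lemma:bipolarity-of-examples} shows that attaching $E^{i,j}_k\cup W^{i,j}_{P_k}$ promotes an $m$-negaton for $\pm J^i_k$ to an $(m{+}1)$-negaton for $\pm J^i_{k+1}$ (using that $[\eta_k]\in\pi_1(S^3\smallsetminus P_k)^{(1)}$ and that $P_k$ is slice), and similarly for the top piece; thus $Z^-_{i,j}$ is an $n$-negaton for $\pm K_i$.

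Next, blow up to produce the integral $n$-solution $Z^0_{i,j}$ and apply Novikov additivity of the $L^2$-signature defect over $G$:
\[
  \osigmat_G(Z^0_{i,j})=\osigmat_G(V^0_{i,0})+\sum_{k=0}^{n-1}\osigmat_G(E^{i,j}_k)+\sum_{k=0}^{n-2}\osigmat_G(W^{i,j}_{P_k})+\osigmat_G(W^{i,j}_R).
\]
I would show that every term on the right except $\osigmat_G(V^0_{i,0})$ vanishes. For each standard satellite cobordism $E^{i,j}_k$, the vanishing $\osigmat_G(E^{i,j}_k)=0$ is the standard fact that these cobordisms have trivial $L^2$-signature defect over any coefficient group, as used repeatedly in \cite[Section~4]{Cha:2010-1} and \cite[Section~5]{Cochran-Harvey-Leidy:2009-1} for the analogous cobordism~$C$. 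For each slice-disk exterior $W\in\{W^{i,j}_{P_k},W^{i,j}_R\}$, since $\pi_1(W)$ is normally generated by the meridian of the slice knot, either $\phi|_{\pi_1(W)}$ is trivial (and $\osigmat_G(W)=0$ automatically) or the meridian has nontrivial image in $G$. In the latter case, by Lemma~\ref{lemma:nontriviality-of-representation}'s analysis the meridian lies in $\cP^{n-k-1}\pi_1(X)$ for some $k\le n-2$ (or in $\cP^0\pi_1(X)$ for $W^{i,j}_R$); since each quotient $\cP^j/\cP^{j+1}$ is a rational vector space for $j\le n-1$, the image has infinite order, so the amenable signature theorem~\eqref{equation:amenable-signature-vanishing} applies to the integral $m$-solution $W$ and yields $\osigmat_G(W)=0$.

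Consequently $\osigmat_G(Z^0_{i,j})=\osigmat_G(V^0_{i,0})=\rhot(M(\pm J^i_0),\phi)$. The restriction of $\phi$ to $\pi_1(M(\pm J^i_0))$ lands in the abelian quotient $\cP^{n}\pi_1(X)/\cP^{n+1}\pi_1(X)$, which is a $\Z_p$-vector space by construction. Either the image is trivial, giving $\rhot=0$, or it is a cyclic subgroup of order $p$, in which case the abelian computation~\eqref{equation:abelian-computation} yields $\rhot=\pm\rho(J^i_0,\Z_p)$; the orientation of $V^-_{i,0}$ can be chosen so that the sign is $+$. The main obstacle is the combined bookkeeping in the second paragraph: checking that the meridians of the intermediate slice pieces really do land in the torsion-free strata of the mixed-type commutator series (so that the infinite-order hypothesis of the amenable signature theorem is met) and that the standard satellite cobordisms contribute nothing, both of which require careful tracking of $\phi$ through the tower construction of $X$.
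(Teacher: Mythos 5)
Your construction takes a genuinely different route from the paper's.  The paper observes that the composite pattern
\[
  Q \;=\; R\bigl(P_{n-2}(\eta_{n-2},\ldots,P_0(\eta_0,U)\cdots),D\bigr)
\]
is itself a slice knot, so that $K_i = Q(\eta_0,J^i_0)$ with $[\eta_0]\in\pi_1(S^3\sm Q)^{(n)}$.  It then takes $Z^-_{i,j}$ to be a \emph{single} slice disk exterior $N$ for $Q$ glued to a $0$-negaton $N^-$ for $J^i_0$, so that the additivity formula has only two terms and $\pi_1(M(J^i_0))$ visibly maps into $\cP^n\pi_1(X)$ in one stroke.  Your construction instead rebuilds the whole tower of satellite cobordisms $E^{i,j}_k$ and slice disk exteriors for the intermediate slice knots $P_k$ and $R(U,D)$.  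This can be made to work, but it is considerably heavier: one must re-verify that the tower is an $n$-negaton, and one must control $n+1$ extra signature-defect terms rather than one.

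There is also a genuine gap in the second paragraph.  You assert that if $\phi(\mu_{P_k})$ is nontrivial, then it has infinite order, ``since each quotient $\cP^j/\cP^{j+1}$ is a rational vector space for $j\le n-1$.''  But what the bookkeeping actually gives you is only that $\phi(\mu_{P_k})$ lies in $\cP^{n-k-1}\pi_1(X)/\cP^{n+1}\pi_1(X)$; it is entirely possible that it lies deeper, in the bottom stratum $\cP^n\pi_1(X)/\cP^{n+1}\pi_1(X)$, which is a $\Z_p$-vector space.  In that case $\phi(\mu_{P_k})$ has order $p$, not infinite order, and the amenable signature theorem~\eqref{equation:amenable-signature-vanishing} does not apply to $W^{i,j}_{P_k}$.  (Nothing in Lemma~\ref{lemma:nontriviality-of-representation} gives a lower bound on the stratum of these intermediate meridians, and for the copies inside $Z^0_{i,j}$ with $i>1$ the nontriviality analysis of that lemma is not carried out at all.)  The gap is repairable: if $\phi(\mu_{P_k})$ has order $p$, then since $\pi_1(M(P_k))$ is normally generated by $\mu_{P_k}$ and the image lies in the abelian group $\cP^n\pi_1(X)/\cP^{n+1}\pi_1(X)$, the image is cyclic of order $p$, and the abelian computation~\eqref{equation:abelian-computation} together with the sliceness of $P_k$ gives $\rhot(M(P_k),\phi)=\rho(P_k,\Z_p)=0$.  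But as written your argument invokes the wrong tool and would not notice that the infinite-order hypothesis can fail.  The paper's one-step construction avoids this issue entirely, which is one reason it is preferable.
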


\begin{proof}
  Recall from the definition that
  \[
    K_i=R(P_{n-2}(\eta_{n-2},\ldots,P_{0}(\eta_{0},J^i_0) \cdots ),
    D).
  \]
  Let $Q$ be the knot obtained by replacing $J^i_0$ in this expression
  by the trivial knot~$U$.  Since $U$ and all the $P_k$ are slice, $Q$
  is slice.  We can view $\eta_0$ as a curve in $S^3 \sm Q$, and $K_i$
  can be written as $K_i=Q(\eta_0,J^i_0)$.  Since
  $[\eta_k] \in \pi_1(S^3 \sm P_k)^{(1)}$ for each $k$,
  $[\eta_0] \in \pi_1(S^3\sm Q)^{(n)}$ by induction. 
  
  Choose a slice disk exterior, say $N$, for the slice knot $Q$, and
  choose a $0$-negaton, say $N^-$, for the $0$-negative knot~$J^i_0$.
  Now, as our $Z^-_{i,j}$, take the union of $N$ and $N^-$, with a
  tubular neighborhood of $\eta_0 \subset M(Q)=\partial N$ and the
  solid torus $\overline{M(J^i_0)\sm E_{J^i_0}} \subset \partial N^-$
  identified.  In $Z^-_{i,j}$, $\eta_0$ is isotopic to a meridian of
  $J^i_0$, which normally generates $\pi_1(N^-)$ by
  Definition~\ref{definition:positivity-negativity}~(1) since $N^-$ is
  a 0-negaton.  Since $[\eta_0]\in \pi_1(S^3\sm Q)^{(n)}$,
  $\pi_1(N^-)$ maps to $\pi_1(Z^-_{i,j})^{(n)}$, and from this it
  follows that $Z^-_{i,j}$ is an $n$-negaton.

  To obtain the integral $n$-solution $Z^0_{i,j}$, first let $N^0$ be
  the connected sum of $N^-$ with $b_2(N^-)$ copies of $\C P^2$.  Then
  $N^0$ is an integral $0$-solution for~$J^i_0$, and
  $Z^0_{i,j} = N\cupover{S^1\times D^2} N^0$.  The following
  additivity is known (e.g.,
  see~\cite[Proposition~3.2]{Cochran-Orr-Teichner:2002-1},
  \cite[Proposition~4.4]{Cha:2010-1} and their proofs):
  $\osigmat_G(Z^0_{i,j}) = \osigmat_G(N) + \osigmat_G(N^0)$.  By the
  amenable signature
  theorem~\eqref{equation:amenable-signature-vanishing},
  $\osigmat_G(N)=0$ since $N$ is a slice disk exterior.  By the
  $L^2$-signature defect
  interpretation~\eqref{equation:l2sign-interpretation},
  $\osigmat_G(N^0) = \rhot(M(J^i_0),\phi)$ since
  $\partial N^0=M(J^i_0)$.  Since $[\eta_0]\in \pi_1(S^3\sm Q)^{(n)}$,
  $\pi_1(M(J^i_0))$ maps into $\pi_1(X)^{(n)}\subset \cP^n\pi_1(X)$.
  So the conclusion follows by
  applying~\eqref{equation:abelian-computation}.
\end{proof}

By the property \ref{item:independence-of-signature-of-J_0} in
Section~\ref{subsection:construction-of-examples}, we have
$\rho(J^i_0,\Z_p) = 0$ for $i>1$.  By
Lemma~\ref{lemma:specific-choice-of-n-negaton}, it follows that
\begin{equation}
  \label{equation:vanishing-of-L2-signature-of-Z0}
  \osigmat_G(Z^0_{i,j}) = 0.
\end{equation}
for all $i>1$ and for all~$j$.

Finally, $\osigmat_G(C)=0$ and $\osigmat_G(E_k)=0$ for each $k$, by
\cite[Lemma~2.4]{Cochran-Harvey-Leidy:2009-1}.  From this and
\eqref{equation:L2-signature-of-X},
\eqref{equation:computation-of-rho-for-J0},
\eqref{equation:universal-bound-computation-1},
\eqref{equation:universal-bound-computation-2},
\eqref{equation:L2-signature-for-V0} and
\eqref{equation:vanishing-of-L2-signature-of-Z0}, we obtain the
following:
\[
  r \cdot \big|\rho(J^1_0,\Z_p)\big| \le \bigg| \sum_{k=0}^{n-2}
  \rhot(M(P_k),\phi) \bigg|+ \big|\rhot(M(R(U,D)),\phi)\big| \le
  69\,713\,280\cdot(6n+90)
\]
where $r\ge 1$ is an integer.  But it contradicts the
property~\ref{item:signature-of-J_0-large-enough} of~$J^1_0$.  This
completes the proof for Case~1.

\section{Realization of signature functions by negative knots}
\label{section:signature-realization-by-negative-knots}

In the arguments in Section~\ref{section:using-cheeger-gromov}, the
properties \ref{item:negativity-of-J_0},
\ref{item:signature-of-J_0-large-enough} and
\ref{item:independence-of-signature-of-J_0} stated in
Section~\ref{subsection:construction-of-examples} were among the
essential ingredients used to realize a nontrivial value of the
amenable $\rho$-invariant obstruction.  In this section we describe a
construction of an infinite sequence of knots $J^i_0$ with primes
$p_i$ ($i=1,2,\ldots$) satisfying \ref{item:negativity-of-J_0},
\ref{item:signature-of-J_0-large-enough} and
\ref{item:independence-of-signature-of-J_0}.  For the reader's
convenience, we recall them below.

\begin{enumerate}[label=({J\arabic*})]
\item For each $i$, $J^i_0$ is $0$-negative.
\item For each $i$,
  $|\rho(J^i_0,\Z_{p_i})| > 69\,713\,280\cdot (6n+90)$.
\item For $i<j$, $\rho(J^j_0,\Z_{p_i})=0$.
\end{enumerate}

Here, $\rho(J,\Z_{d})$ is the average of the values of the
Levine-Tristram signature function of $J$ at the $d$th roots of unity
(see Equation~\eqref{equation:abelian-computation}).

We begin with a realization of an arbitrary Alexander polynomial by a
0-negative knot.
\begin{lemma}
  \label{lemma:alexader-polynomial-realization-by-0-negative-knot}
  For every Alexander polynomial $\Delta(t)$ over $\Z$, there is a
  $0$-negative knot whose Alexander polynomial is equal to $\Delta(t)$
  up to multiplication by~$\pm t^k$.
\end{lemma}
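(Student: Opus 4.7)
The plan is to produce an explicit knot $K$ realizing $\Delta(t)$ via a band-diagram construction whose $0$-negativity can be verified directly, using the criterion from the proof of Lemma~\ref{lemma:bipolarity-of-examples}~(ii): a knot that is converted to the unknot by changing only negative crossings to positive ones is $0$-negative. The first step is the classical Seifert matrix realization: for $\Delta(t)\in\Z[t^{\pm1}]$ with $\Delta(1)=\pm1$ and the usual symmetry $\Delta(t)\doteq\Delta(t^{-1})$, there is an integer $2g\times 2g$ matrix $A$ satisfying $\det(tA-A^\top)\doteq\Delta(t)$, which can be realized as the Seifert form of a knot $K_0$ carried by a disk-with-$2g$-bands Seifert surface whose bands have prescribed self-linkings and mutual linking numbers given by the entries of $A$.

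Next, using the $S$-equivalence freedom in the choice of $A$ (which preserves $\Delta$) together with geometric band moves, I would modify the band diagram so that every crossing in a chosen planar projection is negative. Concretely, I would first stabilize $A$ by adding blocks $\sbmatrix{0&1\\0&0}$ as needed, then route the contributions of positive twists and positive clasps into cancelling pairs of crossings so that the surviving crossings are all negative. Changing each such negative crossing to positive then undoes all band twists and all band-band linkings, leaving a disk with trivial bands whose boundary is the unknot; by the crossing-change criterion this yields $0$-negativity of the resulting knot $K$, while $\Delta_K\doteq\Delta$ by construction.

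The main obstacle will be this sign-control step, i.e.\ showing that any integer Seifert matrix can, after $S$-equivalence, be realized by a banded surface whose every crossing is negative. I would attack it inductively on the genus: the genus-one case reduces to writing down an explicit twist-knot-like diagram with only negative crossings realizing any given $2\times2$ Seifert matrix (up to $S$-equivalence), and higher genus is handled by repeatedly stabilizing $A$ and peeling off $2\times2$ blocks via band slides to reduce to the base case. An alternative route, which sidesteps the combinatorial realization problem, is first to show that the family of Alexander polynomials realized by $0$-negative knots is closed under products---via a connected-sum-of-negatons construction using the standard cobordism from $M(K_1)\sqcup M(K_2)$ to $M(K_1\csum K_2)$ as in Section~\ref{subsection:first-step-of-proof}---and then to express an arbitrary $\Delta$ as a product of polynomials realized by a small collection of explicitly $0$-negative generators.
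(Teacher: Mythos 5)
Your proposal takes a genuinely different route from the paper, and the route has a real gap at exactly the point you flag as "the main obstacle." The paper's proof does not go through crossing changes at all: it uses Levine's surgery realization, in which one starts from a two-component unlink $\alpha\cup K$ in $S^3$ with $\lk(\alpha,K)=0$, arranged so that blowing down the $-1$-framed $\alpha$ sends the unknot $K$ to a knot with Alexander polynomial $\Delta$. The $0$-negaton is then written down explicitly as the exterior in $D^4$ of the standard slice disk for the unknot $K$, with a single $-1$-framed 2-handle attached along $\alpha$; one checks $\pi_1(V)\cong\Z$ and exhibits a generator of $H_2(V)\cong\Z$ with self-intersection $-1$, verifying the definition of a $0$-negaton directly. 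Your approach instead tries to verify $0$-negativity via the Cochran--Lickorish crossing-change criterion, which requires producing a diagram with carefully controlled crossing signs.

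Two problems. First, the sign-control claim --- that every Alexander polynomial is realized by a banded Seifert surface whose diagram has only negative crossings --- is left entirely open, and it is not at all clear your induction can be carried out. Already at genus one it fails in the naive form: the matrix $\sbmatrix{p&1\\0&q}$ gives $\Delta\doteq pq\,t-(2pq-1)+pq\,t^{-1}$, and making both band twists negative forces $pq>0$, so a $\Delta$ with negative leading coefficient cannot be handled this way without escaping to higher genus or more elaborate clasping, and your outline doesn't say how. Second, the sentence "changing each such negative crossing to positive then undoes all band twists and all band-band linkings, leaving $\ldots$ the unknot" is not literally correct: changing \emph{all} crossings of an all-negative diagram yields the mirror image, not the unknot. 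The correct use of the criterion is that \emph{some} subset of crossings can always be changed to unknot (descending-diagram argument), and in an all-negative diagram that subset automatically consists of negative crossings; so the criterion would indeed apply if your all-negative diagram existed, but its existence is exactly the gap. Your fallback via connected sums and multiplicativity of the Alexander polynomial does not rescue this either: Alexander polynomials form a multiplicative monoid with infinitely many irreducibles of unbounded degree, so there is no finite or otherwise "small" generating set from which to build arbitrary $\Delta$. The paper's surgery construction avoids every one of these issues by producing the negaton directly.
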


\begin{proof}
  Write the given Alexander polynomial as
  \[
    \Delta(t)=a_{2g}(t^g+t^{-g}) + \cdots + a_1(t+t^{-1}) +a_0,
  \]
  with $a_i\in \Z$, $a_0+\sum_{i=1}^g 2a_i=\Delta(1)=-1$.  Invoke a
  classical realization method of Levine~\cite{Levine:1966-1} as
  follows.  Perform $-1$ surgery along the unknotted circle $\alpha$
  in Figure~\ref{figure:levine-realization}, and regard the other
  unknotted circle as a knot $K$ in the result of surgery, which
  is~$S^3$.  Then $K$ has Alexander polynomial~$\Delta(t)$, due
  to~\cite[Proof of Theorem~2]{Levine:1966-1}.

  \begin{figure}[t]
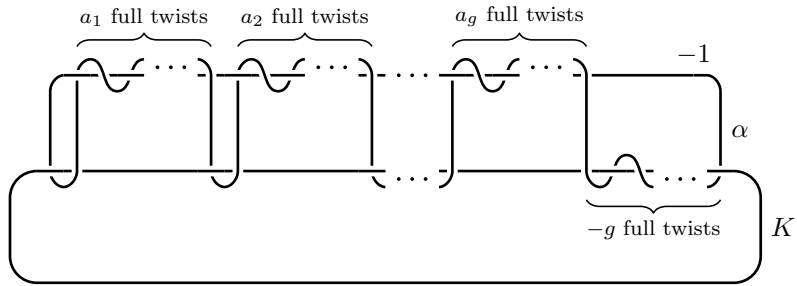

    \includestandalone{levine-realization}
    \caption{Levine's knot with a given Alexander polynomial.}
    \label{figure:levine-realization}
  \end{figure}

  Now, regard Figure~\ref{figure:levine-realization} as a 2-component
  link $\alpha\cup K$ in $S^3$, and let $D$ be the standard slicing
  disk in $D^4$ for the unknotted component~$K$.  Attach a 2-handle to
  the exterior of $D\subset D^4$ along the $-1$ framing of $\alpha$,
  and call the result~$V$.  After the $-1$ surgery, the zero framing
  on $K$ in $S^3$ remains the zero framing, since $\lk(K,\alpha)=0$.
  So $\partial V=M(K)$.  Also, since $\pi_1(D^4\sm D) = \Z$ and
  $\lk(K,\alpha)=0$, we have
  $\pi_1(V) = \pi_1(D^4\sm D)/\langle \alpha \rangle \cong \Z$.
  Choose a surface in the exterior of $D\subset D^4$ bounded by
  $\alpha$, and take the union with the surgery core disk.  This gives
  us a closed surface in $V$, say $S$, which generates
  $H_2(V)\cong \Z$.  Since the surgery framing is $-1$, the self
  intersection of $S$ is~$-1$.  It follows that $V$ is a 0-negaton
  bounded by~$M(K)$.
\end{proof}

Also, we will use the following result of the first named author and
Livingston~\cite{Cha-Livingston:2002-1}.

\begin{lemma}[{\cite[Proof of Theorem 1]{Cha-Livingston:2002-1}}]
  \label{lemma:cha-livingston-realization}
  Suppose $0<\theta_0<\pi$ and $\epsilon>0$.  If $\frac ab$ is a
  rational number sufficiently close to $\cos\theta_0$ with
  sufficiently large $b>0$, then for some
  $\theta_1 \in (\theta_0-\epsilon, \theta_0+\epsilon)$, every knot
  $K$ with Alexander polynomial
  \[
    \Delta_K(t)=bt^2-(2b+2a)t+(4a+2b-1)-(2b+2a)t^{-1}+bt^{-2}
  \]
  has the property that $\sigma_K(e^{\theta\sqrt{-1}})=0$ for
  $0\le\theta<\theta_1$, and $|\sigma_K(e^{\theta\sqrt{-1}})|\ge 2$ for
  $\theta_1<\theta\le \pi$.
\end{lemma}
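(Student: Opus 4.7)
The plan is to substitute $s = t + t^{-1}$ to reduce the analysis of the unit-circle zeros of $\Delta_K$ to a quadratic, and then to apply standard properties of the Tristram-Levine signature function. Using $t^2 + t^{-2} = s^2 - 2$, the Alexander polynomial becomes
\[
  \Delta_K = bs^2 - 2(a+b)s + (4a-1),
\]
with positive discriminant $4\bigl((a-b)^2 + b\bigr)$ and two distinct real roots
\[
  s_\pm = \frac{(a+b) \pm \sqrt{(a-b)^2 + b}}{b}.
\]
Since $\sqrt{(a-b)^2 + b} > b - a$ for every $b > 0$, one has $s_+ > 2$, so $s_+$ contributes no zero of $\Delta_K$ on $S^1$. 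On the other hand, $s_- \to 2\cos\theta_0$ as $a/b \to \cos\theta_0$ with $b \to \infty$, so for $a/b$ close enough to $\cos\theta_0$ and $b$ large enough, $s_- = 2\cos\theta_1$ for some $\theta_1 \in (\theta_0 - \epsilon, \theta_0 + \epsilon)$. Thus $\Delta_K$ has precisely the simple conjugate pair $e^{\pm i\theta_1}$ of zeros on $S^1$, and $\Delta_K(-1) = 8(a+b) - 1$ is odd, ruling out $-1$ as a zero.

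Next I would invoke two standard facts about $\sigma_K(e^{i\theta}) = \sigma\bigl((1-e^{i\theta})V + (1-e^{-i\theta})V^T\bigr)$ for any Seifert matrix $V$ of~$K$: (i) $\sigma_K$ is locally constant on $(0,\pi]$ away from unit zeros of $\Delta_K$; and (ii) at each simple unit zero $\omega_0 \ne 1$, the jump of $\sigma_K$ equals $\pm 2$. To identify the constant value on $(0, \theta_1)$, I factor $(1-e^{i\theta})V + (1-e^{-i\theta})V^T = 2\sin(\theta/2)\, M(\theta)$ where $M(\theta) = i(-e^{i\theta/2}V + e^{-i\theta/2}V^T)$ is Hermitian. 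At $\theta = 0$, $M(0) = -i(V - V^T)$, and since $V - V^T$ is the non-degenerate skew-symmetric intersection form on a Seifert surface, its eigenvalues occur in pairs $\pm i\lambda_j$ with $\lambda_j \ne 0$. Hence $M(0)$ is invertible with balanced real eigenvalues $\pm\lambda_j$, giving $\sigma(M(0)) = 0$. Continuity of signature on the open set of invertible Hermitian matrices then yields $\sigma_K(e^{i\theta}) = 0$ for sufficiently small $\theta > 0$; by (i) this value persists throughout $(0, \theta_1)$. Combining with the $\pm 2$ jump at $\theta_1$ from (ii) gives $|\sigma_K(e^{i\theta})| = 2$ on all of $(\theta_1, \pi]$, as required.

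The main step needing care is fact~(ii): the naive dimension count only restricts the jump at a simple zero to $\{-2, 0, 2\}$, and excluding $0$ requires a transversality argument. For this I would use the identity
\[
  \det\bigl((1-\omega)V + (1-\bar\omega)V^T\bigr) = (1-\omega)^n \omega^{-n} \Delta_K(\omega)
\]
on $S^1 \setminus \{1\}$, where $n$ is the size of $V$, obtained from $1-\bar\omega = -\omega^{-1}(1-\omega)$. This identity shows that the determinant of the Hermitian form has a simple zero at $\omega_0 = e^{i\theta_1}$, so the unique eigenvalue vanishing there does so transversally and hence changes sign as $\omega$ crosses~$\omega_0$, yielding a jump of exactly $\pm 2$ and completing the argument.
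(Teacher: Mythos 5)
The paper does not give its own proof of this lemma; it is stated as a citation to Cha--Livingston, so your self-contained derivation genuinely adds something. Your derivation is correct. The substitution $s = t + t^{-1}$ reduces the symmetric polynomial to $bs^2 - 2(a+b)s + (4a-1)$ with discriminant $4((a-b)^2+b) > 0$; the inequality $\sqrt{(a-b)^2+b} > b-a$ correctly gives $s_+ > 2$; and the limit $s_- \to 2\cos\theta_0$ as $a/b \to \cos\theta_0$, $b \to \infty$ pins down a unique simple conjugate pair of unit roots $e^{\pm i\theta_1}$ with $\theta_1$ near $\theta_0$, while the oddness of $\Delta_K(-1) = 8(a+b)-1$ rules out $\omega = -1$. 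Your computation of the value on $(0,\theta_1)$ via the factor $2\sin(\theta/2)M(\theta)$, with $M(0) = -i(V-V^T)$ nonsingular of signature zero (eigenvalues in $\pm\lambda_j$ pairs), is sound, and the identity $\det\bigl((1-\omega)V + (1-\bar\omega)V^T\bigr) = (1-\omega)^n\omega^{-n}\Delta_K(\omega)$, coming from $1-\bar\omega = -\omega^{-1}(1-\omega)$ with $n$ even, correctly settles the transversality needed to exclude a zero jump at $\theta_1$ --- this is precisely the step a naive count of eigenvalues leaves open. The only point worth making explicit is that one should take $\epsilon$ small enough that $(\theta_0-\epsilon,\theta_0+\epsilon) \subset (0,\pi)$ (harmless, since the conclusion only weakens as $\epsilon$ grows), so that $s_- \in (-2,2)$ and $\theta_1 \ne 0,\pi$.
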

  
Now, choose increasing odd primes $p_1<p_2<\cdots$.  For each
$i=1,2,\ldots,$ apply Lemma~\ref{lemma:cha-livingston-realization} and
then
Lemma~\ref{lemma:alexader-polynomial-realization-by-0-negative-knot}
to choose a 0-negative knot $J^i_0$ and a real number
$d_i\in (\pi-\frac{\pi}{p_{i-1}}, \pi-\frac{\pi}{p_{i}})$ such that
$|\sigma_{J^i_0}(e^{\theta\sqrt{-1}})| \ge 2$ for
$d_i < \theta \le \pi$, and $|\sigma_{J^i_0}(e^{\theta\sqrt{-1}})|=0$
for $0 \le \theta < d_i$.  (Here, for brevity, let $p_0=2$ so that
$\pi-\frac{\pi}{p_{i-1}}$ is understood as $\frac{\pi}{2}$ for $i=1$.)
Since $\sigma_{J^i_0}(\omega)=\sigma_{J^i_0}(\overline\omega)$, it
follows that
\begin{equation}
  \label{equation:nontriviality-of-signature-average}
  |\rho(J^i_0,\Z_{p_i})| \ge \frac{1}{p_i}
  \big|\sigma_{J^i_0}\big(e^{(\pi-\frac{\pi}{p_i})\sqrt{-1}}\big) +
  \sigma_{J^i_0}\big(e^{(\pi+\frac{\pi}{p_i})\sqrt{-1}}\big)\big|
  \ge \frac{4}{p_i}.
\end{equation}
Furthermore, for $i<j$,
\[
  \rho(J^j_0,\Z_{p_i}) = \frac{2}{p_i}\sum_{k=0}^{\frac{p_i-1}{2}}
  \sigma_{J^j_0}(e^{2\pi k \sqrt{-1}/p_i}) = 0
\]
since
$\frac{2\pi k}{p_i} \le \pi-\frac{\pi}{p_i} \le
\pi-\frac{\pi}{p_{j-1}} < d_j$ for $k=0,\ldots, \frac{p_i-1}{2}$.  It
follows that \ref{item:negativity-of-J_0} and
\ref{item:independence-of-signature-of-J_0} are satisfied.  Finally,
replace each $J^i_0$ with the connected sum of $N_i$ copies of $J^i_0$
for some $N_i > \frac{p_i}{4} \cdot 69\,713\,280\cdot(6n+90)$.  Then,
from \eqref{equation:nontriviality-of-signature-average}, the property
\ref{item:signature-of-J_0-large-enough} follows too.

\begin{remark}
  The above argument works for any given increasing sequence $\{p_i\}$
  of odd positive integers, without assuming that $p_i$ is prime.
\end{remark}

\section{Case 2: Use of Heegaard-Floer $d$-invariants}
\label{section:using-d-invariant}

Recall that Case 2 assumes that there is an $n$-negaton $X^-$ bounded
by $M(K_1)$ for which the kernel $P$ of $H_1(M(K_1);\Q[t^{\pm1}]) \to
H_1(X^-;\Q[t^{\pm1}])$ is equal to the subgroup~$\langle
\alpha_J\rangle$.  We will reach a contradiction under a weaker
hypothesis that there is a $1$-negaton $X^-$ satisfying $P=\langle
\alpha_J\rangle$.

Recall $K_1 = R(J^1_{n-1},D)$.  First, we claim that $J^1_{n-1}$ may
be replaced by the trivial knot, that is, we may assume that the knot
$K_0:=R(U,D)$ admits a $1$-negaton with the same kernel property.
This is due to \cite[Lemma~8.2]{Cochran-Harvey-Horn:2012-1}, which
essentially gives the following general statement:

\begin{lemma}[{\cite[Lemma~8.2]{Cochran-Harvey-Horn:2012-1}}]
  \label{lemma:eliminate-J-in-d-invariant-computation}  
  Suppose $K_1=K_0(\alpha,J)$ where $(K_0,\alpha)$ is a winding number
  zero pattern.  Note that the Alexander modules
  $H_1(M(K_0);\Q[t^{\pm1}])$ and $H_1(M(K_1);\Q[t^{\pm1}])$ are
  isomorphic, via the standard degree one map associated to a satellite construction.  If $J$ is unknotted by changing only positive
  crossings, then for every $1$-negaton $X_1$ bounded by $M(K_1)$,
  there is an $1$-negaton $X_0$ bounded by $M(K_0)$ such that the two
  maps $H_1(M(K_i);\Q[t^{\pm1}]) \to H_1(X_i;\Q[t^{\pm1}])$, $i=0,1$,
  have the same kernel under the identification of the Alexander
  modules.
\end{lemma}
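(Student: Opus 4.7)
The plan is to convert $X_1$ into a $1$-negaton for $K_0$ by a two-step procedure: first separate $K_0$ from its companion $J$ via the standard satellite cobordism, exposing $M(J)$ as boundary, and then cap off this $M(J)$ using the orientation-reversed $0$-positon of $J$ coming from positive unknottability. The essential trick is \emph{orientation reversal}: since $J$ is $0$-positive, a crossing-change $0$-positon $W$ for $J$ has $H_2(W)$ spanned by $+1$-self-intersection spheres, but reversing orientation converts these into $-1$-self-intersection spheres bounded by $-M(J)$, exactly what is needed to extend a negaton.

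Concretely, since $J$ is unknotted by changing positive crossings, construct $W$ by attaching a $+1$-framed $2$-handle to $M(J)\times I$ along each null-homotopic crossing meridian and capping off the resulting $S^1\times S^2$ end with $S^1\times D^3$; then $H_2(W)$ is spanned by simply-connected $+1$-self-intersection spheres (each obtained by gluing the core of a $2$-handle to a disk in $M(J)\times I$ bounded by the null-homotopic attaching circle). Let $E$ be the standard satellite cobordism from $M(K_1)$ to $M(K_0)\sqcup M(J)$ modeled on the $E_k$ of Section~\ref{subsection:manifold-construction-and-mixed-type-commutator-analysis}, and define
\[
X_0 := X_1 \cupover{M(K_1)} E \cupover{M(J)} \bar W,
\]
so that $\partial X_0 = M(K_0)$.

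A Mayer-Vietoris argument, combined with the winding-number-zero hypothesis (which kills the meridian of $J$ in $H_1(E)$), verifies that $X_0$ is a $1$-negaton: $H_1(X_0)\cong\Z$ is normally generated by the meridian of $K_0$; $H_2(X_0)\cong H_2(X_1)\oplus H_2(\bar W)$ is spanned by the disjointly embedded $-1$-self-intersection surfaces inherited from $X_1$ and $\bar W$; the surfaces from $X_1$ keep their $\pi_1$-image in $\pi_1(X_0)^{(1)}$ via the natural map $\pi_1(X_1)^{(1)}\to\pi_1(X_0)^{(1)}$, while the spheres from $\bar W$ are simply-connected and therefore land trivially in $\pi_1(X_0)^{(1)}$.

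The hard part is the Alexander-module kernel agreement. The key observation is that winding number zero forces the infinite cyclic cover of $X_0$ (classified by linking number with $K_0$) to restrict to a disjoint union of translated copies over both $M(J)$ and $\bar W$, so these pieces contribute trivially to $H_1(X_0;\Q[t^{\pm1}])$. An equivariant Mayer-Vietoris computation then shows that $E$ induces the canonical isomorphism $H_1(M(K_1);\Q[t^{\pm1}])\cong H_1(M(K_0);\Q[t^{\pm1}])$ and that gluing in $\bar W$ does not enlarge the kernel of $H_1(M(K_0);\Q[t^{\pm1}])\to H_1(X_0;\Q[t^{\pm1}])$. Carrying out this equivariant computation carefully---tracking the $\Z$-many lifts of the crossing-change $2$-handles in the cyclic cover and verifying that they interact trivially with the $K_0$-part of the Alexander module---is the principal technical step; once completed, the identification of the two kernels follows directly.
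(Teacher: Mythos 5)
The paper does not prove this statement; it is cited from Cochran--Harvey--Horn (their Lemma~8.2). Your overall plan---peel off $J$ via the satellite cobordism and cap $M(J)$ with the orientation-reversed crossing-change $0$-positon---is the natural one and very likely matches the cited proof in outline, but two steps are not established as written. First, the surfaces generating $H_2(\bar W)$ need not be spheres: the crossing circles are in general only null-homologous, not null-homotopic, in $M(J)$, so gluing the $2$-handle core to a null-homology in $M(J)\times I$ yields a closed surface of possibly positive genus, and ``simply-connected, hence in $\pi_1(X_0)^{(1)}$'' does not apply. The correct reason condition~(3) of Definition~\ref{definition:positivity-negativity} holds for these surfaces is that $\pi_1(\bar W)$ is normally generated by $\mu_J$, which in $X_0$ is freely homotopic to $\alpha$ and hence null-homologous by the winding-number-zero hypothesis; since $\pi_1(X_0)^{(1)}$ is normal, all of $\pi_1(\bar W)$ maps into $\pi_1(X_0)^{(1)}$, regardless of the genus of the surfaces.

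The more serious gap is the kernel agreement, which is the entire content of the lemma and which you essentially concede is left undone. Saying the $M(J)$ and $\bar W$ pieces ``contribute trivially to $H_1(X_0;\Q[t^{\pm1}])$'' is not accurate: because the $\Z$-cover is trivial over them, each contributes a free rank-one $\Q[t^{\pm1}]$-summand $H_1(-;\Q)\otimes_{\Q}\Q[t^{\pm1}]$. What Mayer--Vietoris actually must show is that these cancel against the contribution from $H_1(M(J);\Q[t^{\pm1}])$, so that the inclusion induces an isomorphism $H_1(X_1\cup E;\Q[t^{\pm1}])\cong H_1(X_0;\Q[t^{\pm1}])$. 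Even granting that, one still needs injectivity of $H_1(X_1;\Q[t^{\pm1}])\to H_1(X_1\cup E;\Q[t^{\pm1}])$ (which rests on $M(K_1)\hookrightarrow E$ inducing an isomorphism on rational Alexander modules), and one must check that the Alexander-module identification of the lemma is the one induced by passing through $E$. Without assembling this chain of isomorphisms, the coincidence of the two kernels is not proved; your proposal names the step but does not carry it out.
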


In our case, $K_1=R(J^1_{n-1},D) = K_0(\alpha_J, J^1_{n-1})$ and
$J^1_{n-1}$ is unknotted by changing one positive crossing (the
topmost crossing in Figure~\ref{figure:stevedore-pattern}).  Thus the
claim follows by applying
Lemma~\ref{lemma:eliminate-J-in-d-invariant-computation}.  Note that
we use the assumption $n\geq 2$.

In what follows, we assume that $X^-$ is a 1-negaton bounded by
$M(K_0)$ such that $P:=\Ker\{H_1(M(K_0);\Q[t^{\pm1}]) \to
H_1(X^-;\Q[t^{\pm1}])\}$ is equal to $\langle\alpha_J\rangle$, where
$K_0=R(U,D)$ as above.

\subsection{Metabolizer of finite cyclic branched covers and
$d$-invariants}
\label{subsection:metabolizer-finite-covers}

For a positive integer $m$, let $\Sigma_m$ be the $m$-fold cyclic
cover of $S^3$ branched along~$K_0$.  Let $X_m$ be the 4-manifold
obtained by attaching a 2-handle, to the $m$-fold cyclic cover of 
the given $X^-$, along the pre-image of the zero framed meridian
of~$K_0$.  We have $\partial X_m=\Sigma_m$.  In case of $m=1$,
$\partial X_1=S^3$ and the cocore of the 2-handle is a slice disk in
$X_1$ bounded by~$K_0$.  (Indeed, our $X_1$ is the 4-manifold used to
give an alternative definition of negative knots in
\cite[Definition~2.2]{Cochran-Harvey-Horn:2012-1}.)  The manifold
$X_m$ is the $m$-fold branched cyclic cover of $X_1$ along the slice
disk.

To state the $d$-invariant obstruction
of~\cite{Cochran-Harvey-Horn:2012-1}, we use the following notations.
For a rational homology 3-sphere $Y$ and a \spinc\ structure
$\mathfrak{t}$ on $Y$, let $d(Y,\mathfrak{t})$ be the associated
correction term invariant of Ozsv\'ath and
Szab\'o~\cite{Ozsvath-Szabo:2003-2}.  When $Y$ is a $\Z_2$-homology
sphere (for instance it is the case for $Y=\Sigma_m$ if $m$ is an odd
prime power), denote by $\mathfrak{s}_Y$ the \spinc\ structure induced
by the unique spin structure on~$Y$.  Every \spinc\ structure on $Y$
is of the form $\mathfrak{s}_Y+c$ for some $c\in H^2(Y)$, where $+$
designates the action of $H^2(Y)$ on the \spinc\ structures.  A
subgroup $G$ in $H_1(Y)$ is called a \emph{metabolizer} if
$G = G^\perp := \{x\in H_1(Y)\mid\lambda(x,G)=0\}$, where
$\lambda\colon H_1(Y)\times H_1(Y)\to \Q/\Z$ is the linking form.

\begin{definition-named}[Correction term $d$-invariant obstruction
  \textnormal{(\cite[Theorem~6.5]{Cochran-Harvey-Horn:2012-1})}]

  If $X^-$ is a $1$-negaton bounded by $M(K_0)$ and $m$ is an odd
  prime power, then $G:= \Ker\{H_1(\Sigma_m)\rightarrow H_1(X_m)\}$ is
  a metabolizer, and
  \begin{equation}
    \label{equation:CHHinequality} 
    d(\Sigma_m,\mathfrak{s}_{\Sigma_m}+\hat{x})\geq 0 
  \end{equation}
  for the Poincar\'e dual $\hat{x}\in H^2(\Sigma_m)$ of every
  $x\in G$.
\end{definition-named}

Understanding the metabolizer in the $d$-invariant obstruction is
essential for our purpose.  We will relate the \emph{above}
metabolizer $G$ of the $m$-fold branched cover to the metabolizer
$P\subset H_1(M(K_0);\Q[t^{\pm1}])$ of the infinite cyclic cover.
Recall that $H_1(\Sigma_m)$ is isomorphic to
$H_1(M(K_0);\Z[t^{\pm1}])/\langle t^m-1 \rangle$ (for instance
see~\cite{Milnor:1968-1}, or use a Wang sequence argument).  Let
$x_1$, $x_2 \in H_1(\Sigma_m)$ be the images of
$[\alpha_J], [\alpha_D] \in H_1(M(K_0);\Z[^{\pm1}])$ respectively.
First, we claim that a metabolizer in $H_1(\Sigma_m)$ is either
$\langle x_1\rangle$ or~$\langle x_2\rangle$.  In fact, from the
previous computation of $H_1(M(K_0);\Z[t^{\pm1}])$ and the Blanchfield
form in
Section~\ref{section:construction-of-examples-first-step-proof}, it
follows that each of $x_1$, $x_2$ generates a cyclic subgroup
$\langle x_i\rangle \subset H_1(\Sigma_m)$ of order $2^m-1$,
$H_1(\Sigma_m) = \langle x_1\rangle \oplus \langle x_2\rangle$, and
the linking form satisfies $\lambda(x_1,x_1) = \lambda(x_2,x_2)=0$ and
$\lambda(x_1,x_2)\ne 0$.  (A computation confirming this can also be
found in~\cite[Proposition~2]{Gilmer-Livingston:1992-1}.)  The claim
follows from this.

\begin{lemma}
  \label{lemma:metabolizer-finite-cover}
  Under the hypothesis of Case
  2 that $P=\langle\alpha_J\rangle$, $G=\Ker\{H_1(\Sigma_m)\to
  H_1(X_m)\}$ is equal to $\langle x_1\rangle$ for every sufficiently
  large prime~$m$.
\end{lemma}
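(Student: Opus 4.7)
The plan is to show directly that $x_1 \in G$ for all sufficiently large primes $m$; combined with the order count $|G| = 2^m - 1 = |\langle x_1\rangle|$, this forces $G = \langle x_1 \rangle$. First I use standard cyclic-cover arguments to identify $H_1(\Sigma_m) \cong A/(t^m - 1)$ and $H_1(X_m) \cong A'/(t^m - 1)$, where $A = H_1(M(K_0); \Z[t^{\pm 1}])$ and $A' = H_1(X^-; \Z[t^{\pm 1}])$; under these identifications, the map $H_1(\Sigma_m) \to H_1(X_m)$ becomes the reduction modulo $t^m - 1$ of the inclusion-induced map $\iota_*\colon A \to A'$. The goal then reduces to showing that $\iota_*[\alpha_J]$ is divisible by $t^m - 1$ in $A'$.

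Next I analyze the cyclic $\Z[t^{\pm 1}]$-submodule $C \subseteq A'$ generated by $\iota_*[\alpha_J]$. Since $[\alpha_J]$ integrally generates $\Z[t^{\pm 1}]/(t - 2) \cong \Z[\tfrac{1}{2}]$ in $A$ (as noted in the remark following the Blanchfield computation in Section~\ref{section:construction-of-examples-first-step-proof}), $C$ is a $\Z[t^{\pm 1}]$-quotient of $\Z[\tfrac{1}{2}]$ with $t$ acting as multiplication by $2$. The Case~2 hypothesis $P = \langle \alpha_J \rangle$ in the \emph{rational} Alexander module gives $\iota_*[\alpha_J] \otimes 1 = 0$ in $A' \otimes \Q$, so $C$ is $\Z$-torsion. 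Since the nonzero ideals of $\Z[\tfrac{1}{2}]$ are generated by odd positive integers, we conclude $C \cong \Z/N_0\Z$ for some odd $N_0 \geq 1$.

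Then for $m$ prime with $m > N_0$, every prime $q \mid N_0$ is odd with $q \leq N_0 < m$ and $\mathrm{ord}_q(2) \leq q - 1 < m$; since $\mathrm{ord}_q(2) \geq 2$ and $m$ is prime, this forces $\mathrm{ord}_q(2) \nmid m$, so $q \nmid 2^m - 1$. Hence $\gcd(2^m - 1, N_0) = 1$, so $2^m - 1$ is a unit in $C \cong \Z/N_0\Z$, and $\iota_*[\alpha_J] = (2^m - 1)\, y_0 = (t^m - 1)\, y_0$ for some $y_0 \in C \subseteq A'$. Reducing modulo $t^m - 1$ gives $x_1 \mapsto 0$ in $H_1(X_m)$, so $x_1 \in G$, and the equality $G = \langle x_1 \rangle$ follows by matching orders.

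The main obstacle is bridging the \emph{rational} hypothesis of Case~2 with the needed \emph{integral} divisibility by $t^m - 1$ in $A'$. The cyclic submodule $C$ is the bridge: combining the integral $(t-2)$-annihilation of $\iota_*[\alpha_J]$ with the rational vanishing pins $C$ down to $\Z/N_0\Z$, and then an elementary arithmetic argument about prime divisors of $2^m - 1$ supplies the desired divisibility for all sufficiently large prime~$m$.
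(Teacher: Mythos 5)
Your proof is correct and essentially follows the paper's route: both reduce to showing $x_1 \in G$ by observing that the Case~2 hypothesis forces the image of $[\alpha_J]$ in $H_1(X^-;\Z[t^{\pm1}])$ to be finite $\Z$-torsion, which is coprime to $2^m-1$ for all sufficiently large primes~$m$ (this is the content of the paper's Lemma~\ref{lemma:prime}). The only differences are cosmetic: you exploit the $t=2$ action on the cyclic $\Z[t^{\pm1}]$-submodule generated by the image of $[\alpha_J]$ to exhibit it directly as a multiple of $t^m-1$, rather than the paper's device of multiplying $[\alpha_J]$ by an integer $\equiv 1\pmod{2^m-1}$; and you conclude $G=\langle x_1\rangle$ by order-counting (using that $G$ is a metabolizer, hence $|G|=2^m-1$) rather than by appealing to the classification of metabolizers of $H_1(\Sigma_m)$.
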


We do not know any estimate for how large $m$ should be.  In our
argument below, it depends on the 1-negaton~$X^-$.  This is the reason
that we need to consider an infinite family of the $d$-invariants.

\begin{proof}[Proof of Lemma~\ref{lemma:metabolizer-finite-cover}]
  Consider the following commutative diagram:
  \[
    \begin{tikzcd}
      H_1(M(K_0);\Q[t^{\pm 1}]) \ar[d] &
      H_1(M(K_0);\Z[t^{\pm 1}]) \ar[l]\ar[r]\ar[d] &
      H_1(\Sigma_m) \ar[d]
      \\
      H_1(X^-;\Q[t^{\pm 1}]) &
      H_1(X^-;\Z[t^{\pm 1}]) \ar[l]\ar[r] &
      H_1(X_m)
    \end{tikzcd}
  \]
  Here, the vertical maps are induced by inclusions, the left
  horizontal maps are tensoring by $\Q$, and the right horizontal maps
  are the quotient maps.  The left and right vertical maps have
  kernels $P$ and $G$ respectively.
  
  The image of $[\alpha_J]\in H_1(M(K_0);\Z[t^{\pm 1}])$ in
  $H_1(X^-;\Z[t^{\pm 1}])$ has finite order, say~$a$, since
  $[\alpha_J]$ is sent to $0\in H_1(X^-;\Q[t^{\pm1}])$.  Choose an odd
  prime $m$ not smaller than each prime factor of~$a$.  We need the
  following elementary fact.
  
  \begin{lemma}
    \label{lemma:prime}
    If $p$ and $m$ are primes and $p\le m$, then $p$ and $2^m-1$ are
    coprime.
  \end{lemma}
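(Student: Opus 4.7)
The plan is to handle the prime $p=2$ separately (it is immediate since $2^m-1$ is odd) and then tackle the harder case $p$ odd by the standard multiplicative-order argument modulo $p$.

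For $p$ odd, I would suppose for contradiction that $p \mid 2^m-1$, so that $2^m \equiv 1 \pmod p$. Let $d$ be the multiplicative order of $2$ in $(\Z/p\Z)^\times$. Then $d \mid m$ by the definition of order. The key observation is that since $m$ is prime, the only divisors of $m$ are $1$ and $m$ itself, so $d \in \{1, m\}$. Ruling out $d=1$ is easy: it would force $2 \equiv 1 \pmod p$, i.e.\ $p \mid 1$.

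So we are forced into $d = m$. Now I would invoke Fermat's little theorem, which tells us $d \mid p-1$, hence $m \mid p-1$. In particular $m \le p-1 < p$, which directly contradicts the hypothesis $p \le m$.

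The step that does the real work is the Fermat bound $d \mid p-1$ combined with $d = m$; there is no serious obstacle, the argument is essentially a one-paragraph number-theoretic computation. The only thing to be careful about is keeping track of the primality of $m$ (needed so that a divisor of $m$ is $1$ or $m$) and the primality of $p$ (needed for Fermat and for ruling out the $p=2$ edge case separately).
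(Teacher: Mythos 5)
Your proof is correct and follows essentially the same route as the paper's: split off $p=2$, then use the multiplicative order $d$ of $2$ modulo $p$, deduce $d=m$ from primality of $m$, and contradict $p\le m$ via Fermat's little theorem. The only cosmetic difference is that you explicitly rule out $d=1$, a step the paper leaves implicit.
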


  \begin{proof}
    If $p=2$, the conclusion is straightforward.  Suppose $p$ is odd
    and $p\mid 2^m-1$.  Let $d$ be the multiplicative order of $2$ in
    $\Z_p^\times$.  Since $2^m\equiv 1 \pmod p$, $d$ divides $m$.
    Since $m$ is a prime, $d=m$.  Since $2^{p-1}\equiv 1\pmod p$ by
    Fermat's little theorem, it follows that $p-1$ is a multiple of
    $m$.  This contradicts $p\le m$. 
  \end{proof}

  Returning to the proof of
  Lemma~\ref{lemma:metabolizer-finite-cover}, it follows that every
  prime factor of $a$ is coprime to $2^m-1$, by Lemma~\ref{lemma:prime}.
  So $a$ is coprime to $2^m-1$.  Choose $b$ such that $ab\equiv 1\pmod
  {2^m-1}$.  Then the image $ab\cdot[x_1]\in H_1(\Sigma_m)$ of
  $ab\cdot [\alpha_J] \in H_1(M(K_0);\Z[t^{\pm1}])$ is equal to
  $[x_1]$, since $[x_1]$ has order $2^m-1$.  Also, by the choice
  of~$a$, the image of $ab\cdot[\alpha_J]$ in $H_1(X^-;\Z[t^{\pm 1}])$
  is zero.  It follows that $[x_1]$ lies in $G$, and thus $G=\langle
  x_1\rangle$.  This completes the proof of
  Lemma~\ref{lemma:metabolizer-finite-cover}.
\end{proof}

Now, our argument for Case 2 proceeds as follows.  For an odd prime
$m$, recall that $\mathfrak{s}_{\Sigma_m}$ is the \spinc\ structure of the $\Z_2$-homology 3-sphere $\Sigma_m$ induced by the unique spin structure.
Theorem~\ref{theorem:d-invariant}, which is stated below, implies that
$d(\Sigma_m, \mathfrak{s}_{\Sigma_m}+2^{m-1}\hat x_1)$ is negative for
every odd prime~$m$.  Since $G=\langle x_1 \rangle$ for sufficiently
large $m$ by Lemma~\ref{lemma:metabolizer-finite-cover}, this
contradicts the $d$-invariant
obstruction~\eqref{equation:CHHinequality}.  This completes the proof
for Case 2, modulo the proof of Theorem~\ref{theorem:d-invariant}.

\subsection{Computation of the $d$-invariants}

The remaining part of this paper is devoted to prove the following:

\begin{theorem}
  \label{theorem:d-invariant}
  For every odd prime power $m$,
  $d(\Sigma_m, \mathfrak{s}_{\Sigma_m}+2^{m-1}\widehat{x}_1)\leq
  -\frac{3}{2}$.
\end{theorem}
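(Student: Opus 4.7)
I would prove Theorem~\ref{theorem:d-invariant} by constructing an explicit positive-definite $4$-manifold $W$ with $\partial W = -\Sigma_m$ and applying the Ozsv\'ath-Szab\'o $d$-invariant inequality.  The bulk of the argument goes into the construction and intersection-form analysis of $W$; the title of Section~\ref{section:the-cobordism-W} signals that this is indeed the geometric heart of the proof.

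First I would give a surgery description of $\Sigma_m = \Sigma_m(K_0)$.  Using $K_0 = R(\alpha_D, D)$ with $D = \Wh(\eta, T)$ the untwisted positive Whitehead double of the right-handed trefoil $T$, and the fact that $\alpha_D$ has winding number zero in $S^3 \sm R$ (so that it is null-homologous in the knot exterior), the preimage of $\alpha_D$ in $\Sigma_m(R)$ consists of $m$ disjoint lifts.  Thus $\Sigma_m$ arises from $\Sigma_m(R)$ by $m$ parallel satellite operations with $D$, or equivalently by a further $m$ satellite operations with $T$ along deeper lifted curves coming from $\eta$; this yields an explicit Kirby diagram.

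Next I would construct $W$ by combining two ingredients: the reverse-oriented $m$-fold branched cover of $B^4$ along the smooth slice disk for $K_0$ (which exists by cutting the $\alpha_D$-band in the Seifert surface of $R$ and produces a rational homology ball bounding $\Sigma_m$), together with additional $2$-handles encoding the Whitehead-doubling structure of $D$ and the non-smooth-sliceness of $T$ at its core.  Choosing framings carefully would make $W$ positive definite of small rank, and a spin$^c$ structure $\mathfrak{s}$ on $W$ can be chosen to restrict to $\mathfrak{s}_{\Sigma_m} + 2^{m-1}\hat{x}_1$ on $\partial W$; the element $2^{m-1}\hat{x}_1$ (which equals $2^{-1}$ modulo $2^m - 1$) arises naturally as a half-characteristic class in the Kirby diagram.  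Applying the Ozsv\'ath-Szab\'o inequality for positive-definite $4$-manifolds,
\[
  d\bigl(\Sigma_m, \mathfrak{s}_{\Sigma_m} + 2^{m-1}\hat{x}_1\bigr) \le -\frac{c_1(\mathfrak{s})^2 + b_2(W)}{4},
\]
and verifying by direct computation on the intersection form of $W$ that $c_1(\mathfrak{s})^2 + b_2(W) \ge 6$, would yield the desired bound $-\tfrac{3}{2}$.

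The main obstacle will be the construction of $W$ itself: one must identify the specific additional handles reflecting $\tau(T) = 1$, verify positive definiteness of the resulting intersection form, and track the spin$^c$ extension carefully enough to compute $c_1(\mathfrak{s})^2$.  The uniformity of the bound in $m$ is notable and suggests that the non-trivial contribution is local to the clasp of $D$, rather than being spread across the $m$ sheets of the branched cover; the $m$-dependent pieces of $W$ should contribute a cancellable combination of $b_2$ and signature, explaining both why no bound on the size of $m$ is needed and why the authors must consider an infinite family of $d$-invariants.
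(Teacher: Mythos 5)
Your high-level instinct — build a $4$-manifold bounding $\Sigma_m$ (or a cobordism) and apply the Ozsv\'ath--Szab\'o $d$-invariant inequality — is the right one, and your closing observation that the $m$-dependent contributions must cancel is also a genuine insight.  But the architecture you propose is not the one that works, and I think it would run into an essential obstruction rather than a technical one.

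The paper does not build a definite filling of $\Sigma_m$.  Instead it constructs a negative-definite \emph{cobordism} $W$ with $\partial W = (-\Sigma_m) \sqcup Y_m$, where $Y_m = \big((m-3)L(3,1)\big)\# S^3_{-7}(D\#-T)\# S^3_{-6}(-T)$, and it has $c_1(\mathfrak{t})^2 = -m$, $b_2(W) = 3m-3$, giving $\tfrac{c_1^2 + b_2}{4} = \tfrac{2m-3}{4}$.  The Ozsv\'ath--Szab\'o inequality then bounds $d(Y_m) - d(\Sigma_m,\cdot)$ from below.  The crucial point is that $d(Y_m,\cdot)$ is then computed using \emph{known Heegaard Floer calculations}, in particular $d\big(S^3_{-7}(D\#-T),\mathfrak{s}\big)=-\tfrac{3}{2}$ from Cochran--Harvey--Horn (a computation that ultimately detects $\tau(D\#-T)\neq 0$).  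It is exactly this $-\tfrac{3}{2}$ that survives after the $m$-dependent cancellation $\tfrac{2m-9}{4} - \tfrac{2m-3}{4} = -\tfrac{3}{2}$.  Your plan of capping off $\Sigma_m$ to a definite filling would, in effect, also require capping off each of these surgery pieces; but there is no reason that $S^3_{-7}(D\#-T)$ bounds a negative-definite $4$-manifold realizing the Ozsv\'ath--Szab\'o inequality as an equality, and encoding a $\tau$-type knot Floer invariant purely in the intersection form via ``additional handles reflecting $\tau(T)=1$'' is not a known operation.  So the intermediate $3$-manifold $Y_m$ and its $d$-invariant are doing real work that your proposal leaves unaccounted for.

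There is also a spin$^c$ issue with your proposed starting piece.  The $m$-fold branched cover of $B^4$ along the slice disk for $K_0$ obtained by cutting the $\alpha_D$-band is a rational homology ball; the metabolizer of $H_1(\Sigma_m)$ it determines is $\langle x_2\rangle$ (the image of $\alpha_D$), not $\langle x_1\rangle$.  The spin$^c$ structure $\mathfrak{s}_{\Sigma_m} + 2^{m-1}\hat x_1$ in the statement therefore does not extend over this rational homology ball, so it cannot serve as the core of $W$ without a substantial further modification of the homology.  (Indeed, if it \emph{did} have metabolizer $\langle x_1\rangle$, there could be no contradiction in Case 2 at all: the slice disk would already certify $d(\Sigma_m,\mathfrak{s}_{\Sigma_m}+2^{m-1}\hat x_1)\geq 0$.)  Finally, a smaller sign issue: the displayed inequality $d(\Sigma_m,\cdot)\le -\tfrac{c_1^2+b_2}{4}$ is what one gets from a \emph{negative}-definite $W$ with $\partial W = -\Sigma_m$; your hypothesis of a positive-definite filling would give a lower bound on $d$, which is of no use here.

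To summarize: the paper's route — a two-ended cobordism to $Y_m$ handled via surgery calculus on the Akbulut--Kirby diagram of the branched cover, plus known $d$-invariant values for $Y_m$'s summands — is not a rephrasing of your plan but a genuinely different mechanism, and I do not see how to close the gap in yours without reinventing it.
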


We remark that the new contribution is the case of $m>3$; for $m=3$,
Theorem~\ref{theorem:d-invariant} was shown in the earlier work of
Cochran, Harvey and Horn~\cite{Cochran-Harvey-Horn:2012-1}.  In
addition, part of our proof which is given in
Section~\ref{subsection:description-of-W} essentially follows the
arguments in~\cite{Cochran-Harvey-Horn:2012-1}.  Our arguments in
Section~\ref{subsection:proof-of-properties-of-W}, which prove a key
lemma (see Lemma~\ref{lemma:the-cobordism-W}) for general $m>3$, are
new and use a different approach.

\begin{remark}\label{remark-spinc}
  In the proof of Theorem~\ref{theorem:d-invariant}, we will use various standard facts on \spinc\ structures, their Chern class, and cohomology classes.
  We briefly recall them below, for the reader's convenience, and to fix notation. 
  Let $\Spinc(M)$ be the set of \spinc\ structures on a manifold~$M$. 
  As in Section~\ref{subsection:metabolizer-finite-covers}, denote the action of $x\in H^2(M)$ on $\Spinc(M)$ by $\mathfrak t \mapsto \mathfrak t + x$ for $\mathfrak{t}\in \Spinc(M)$.
  When $\Spinc(M)$ is nonempty, $x\mapsto \mathfrak{t}+x$ is a bijection $H^2(M) \xrightarrow[\approx]{} \Spinc(M)$ for each $\mathfrak{t}\in \Spinc(M)$.


  \begin{enumerate}
  \item\label{item-remark-spinc-1}
  For $\mathfrak{t}\in\Spinc(M)$, denote by $c_1(\mathfrak t)\in H^2(M)$ the first Chern class of the associated determinant line bundle.
  It satisfies $c_1(\mathfrak{t}+x) = c_1(\mathfrak{t})+2x$.
  \item\label{item-remark-spinc-2} 
  For a $\Z_2$-homology 3-sphere $Y$, $c_1\colon \Spinc(Y)\to H^2(Y)$ is bijective since $H^2(Y)$ does not have 2-torsion.
  For the \spinc\ structure $\mathfrak s_Y$ induced by the unique spin structure of $Y$, $c_1(\mathfrak s_Y)=0$.
  So, $c_1(\mathfrak s)=0$ if and only if $\mathfrak s = \mathfrak s_Y$.
  \item\label{item-remark-spinc-3}  Let $X$ be a 4-manifold. Then the image of $c_1\colon \operatorname{Spin}^c(X)\to H^2(X)$ is equal to the set of characteristic classes.
  Here a cohomology class $c\in H^2(X)$ is \emph{characteristic} if $c(x)\equiv x\cdot x\pmod 2$ for all $x\in H_2(X)$, where $c(x)$ is the evaluation of $c$ on the homology class $x$ and~$\cdot$~is the intersection pairing.
  This definition is equivalent to that $c(x)\equiv x\cdot x\pmod{2}$ holds for generators $x$ of $H_2(X)$.
  \item\label{item-remark-spinc-4} In addition, suppose that each component of $\partial X$ is a rational homology 3-spheres.
  Then for $c\in H^2(X)$, $c^2\in \Q$ is defined as follows.
  Let $j\colon H_2(X)\to H_2(X,\partial X)$ be inclusion-induced.
  Since $H_1(\partial X)$ is torsion, $nc$ is the Poincar\'{e} dual of $j(x)$ for some nonzero $n\in\Z$ and $x\in H_2(X)$.
  We define $c^2=(x\cdot x)/n^2$.
  \end{enumerate}
\end{remark}

To prove Theorem~\ref{theorem:d-invariant}, we will use a cobordism
given in the following lemma.  Let
\[
  Y_m =\big((m-3)L(3,1)\big) \# S^3_{-7}(D\#-T)\#
  S^3_{-6}(-T),
\]
where $L(3,1)$ is the lens space, $-T$ is the left handed
trefoil knot, and $S^3_r(K)$ designates the $r$-framed surgery
manifold of~$K\subset S^3$.

\begin{lemma}
  \label{lemma:the-cobordism-W}
  For every odd prime power $m$, there exists a cobordism $W$ bounded by
  $(-\Sigma_m)\sqcup Y_m$ and a \spinc\ structure $\mathfrak{t}$ on
  $W$ satisfying the following\textup{:}
  \begin{enumerate}[label=({W\arabic*})]
  \item\label{item:negative-definiteness-of-W} $W$ is negative
    definite and $\beta_2(W)=3m-3$.
  \item\label{item:spinc-structure-on-W}
    $c_1(\mathfrak{t}|_{\d W})=(\hat{x}_{1},0) \in H^2(\Sigma_m)
    \oplus H^2(Y_m) = H^2(\partial W) $ and $c_1(\mathfrak{t})^2=-m$.
  \end{enumerate}
\end{lemma}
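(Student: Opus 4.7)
The plan is to construct $W$ as a cobordism built from a specific handle decomposition dictated by the Seifert form of $K_0 = R(U,D)$ together with the Whitehead-double structure tied into one band. Recall that $K_0$ inherits from $R=9_{46}$ a genus-one Seifert surface whose two bands represent the curves $\alpha_J$ (unknotted) and $\alpha_D$ (isotopic to $D=\Wh(T)$, with $T$ the right-handed trefoil), and whose Seifert matrix is $\sbmatrix{0 & 2\\ 1 & 0}$. A pushed-in Seifert surface $F'\subset D^4$ gives rise, via the standard $m$-fold cyclic branched cover construction, to a 4-manifold $B_m$ with $\partial B_m=\Sigma_m$, $\beta_2(B_m)=2(m-1)$, and intersection form built cyclically from $V+V^T=\sbmatrix{0&3\\3&0}$. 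Diagonalizing over $\C$ via the characters of $\Z/m$ splits this form into $m-1$ blocks of the shape $\omega V+\overline{\omega}V^T$ for $\omega\in\mu_m\smallsetminus\{1\}$, each of signature~$0$.

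I would build $W$ by performing a handle modification of $B_m$ that simultaneously cancels the positive directions in these hyperbolic blocks and, on the boundary, trades $\Sigma_m$ for $Y_m$. Concretely, for each of the $m-1$ branched-cover sheets I would attach one additional $2$-handle along a carefully chosen curve lying in the corresponding level, framed so as to produce a negative definite $3\times 3$ block in place of each hyperbolic $2\times 2$ block. This gives $\beta_2(W)=2(m-1)+(m-1)=3m-3$, as required. The Whitehead double $D=\Wh(T)$ intervenes only in two of these sheets: in those two sheets the new curves are, respectively, $D\csum(-T)$ and $-T$ (obtained from $D$ and from its companion by the band moves that resolve the two clasps of a Whitehead-doubling disk), and the framings $-7$ and $-6$ record the framing changes introduced by those band surgeries. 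In the remaining $m-3$ sheets the extra curves are unknotted with framing $-3$, so those components of the boundary are copies of $L(3,1)$, producing the $(m-3)L(3,1)$ summand of $Y_m$.

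Granting this construction, properties \ref{item:negative-definiteness-of-W} and \ref{item:spinc-structure-on-W} are verified by direct calculation on the intersection form of $W$. Negative definiteness reduces, after the character-diagonalization, to checking that each $\omega$-block becomes a negative definite $3\times 3$ matrix once the diagonal framings $-3,\ldots,-3,-7,-6$ are adjoined; this is a finite computation in $m$ parameters. For the spin$^c$ structure I would take $\mathfrak{t}$ to be the characteristic class dual to the sum of the $\alpha_J$-lifts in $W$, and use the identification $H_1(\Sigma_m)\cong H_1(M(K_0);\Z[t^{\pm1}])/(t^m-1)$ from Section~\ref{subsection:metabolizer-finite-covers} to verify $c_1(\mathfrak{t}|_{\Sigma_m})=\hat{x}_1$ and $c_1(\mathfrak{t}|_{Y_m})=0$. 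The identity $c_1(\mathfrak{t})^2=-m$ would then follow from a direct evaluation using the inverse of the intersection matrix; the clean value $-m$ is a telltale sign that $\mathfrak{t}$ corresponds to a single $\Z/m$-character eigenspace, so that the pairing is a sum of $m$ equal terms.

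The main obstacle is identifying the boundary of $W$ with $(-\Sigma_m)\sqcup Y_m$ \emph{on the nose}, rather than only up to further cobordism. The construction must be made equivariant under the cyclic action on $B_m$, and tracking the Whitehead-double clasps through the $m$ sheets requires a careful Kirby-calculus bookkeeping; this is where I expect the bulk of the work to lie. In particular, justifying that the framings $-7$ and $-6$ emerge in precisely the two designated sheets (while the remaining $m-3$ sheets produce genuine $L(3,1)$ summands with framing $-3$) will require extending the $m=3$ computation of \cite[\S7]{Cochran-Harvey-Horn:2012-1}, where no $L(3,1)$ summands appear at all, to the general odd-prime-power setting.
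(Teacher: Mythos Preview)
Your construction has a fatal algebraic obstruction. You propose to obtain $W$ from $B_m$ (the branched cover of $D^4$ over the pushed-in Seifert surface, which is the paper's $W_0$) by attaching $m-1$ additional $2$-handles, so that the result is negative definite with $b_2=3m-3$. But $\partial B_m=\Sigma_m$ is a rational homology sphere, so $H_2(B_m;\Q)$ injects into $H_2$ of anything built from $B_m$ by attaching $2$-handles along its boundary, and the intersection form restricts to the original form on that subspace. You yourself observe that the form of $B_m$ splits into $m-1$ blocks $\omega V+\bar\omega V^T$ of signature~$0$; each such block has an isotropic vector, and a negative definite form cannot contain a nonzero isotropic vector. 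Hence no choice of extra $2$-handles can ``cancel the positive directions'': $B_m\cup(\text{$2$-handles})$ is never negative definite. There is also a boundary mismatch: your $W$ as described has $\partial W=Y_m$, not $(-\Sigma_m)\sqcup Y_m$; if instead you take only the added handles as the cobordism, you get $b_2\le m-1$, not $3m-3$.

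The paper's construction is quite different. It builds $W$ as $\Sigma_m\times[0,1]$ with $4m-6$ two-handles attached along explicit $(-1)$-framed curves $v_i$ in the Akbulut--Kirby surgery picture of~$\Sigma_m$ (crossing-change curves that split the diagram, together with unknotting curves inside the $D$ boxes), followed by $m-3$ three-handles; Kirby calculus then identifies the outgoing boundary with~$Y_m$. For \ref{item:negative-definiteness-of-W} the authors explicitly report that direct computation of the intersection form of $W$ was intractable as $m$ grows. Their workaround is an additivity trick: cap off the $\Sigma_m$ end of $W$ with $W_0=B_m$ to form $\hat W$, blow down the $(-1)$-handles to recognise $\hat W\cong W'\csum(4m-6)\overline{\C P^2}$ with $W'$ given by a small explicit Kirby diagram, read off $b_2(\hat W)$ and $\sign(\hat W)$, and then subtract the easily computed invariants of $W_0$ (using that $\sign(W_0)=0$ since $K_0$ is slice) via Novikov additivity. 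Property~\ref{item:spinc-structure-on-W} is proved separately by exhibiting an explicit characteristic class $\hat w$ and evaluating $\hat w^2$ through rational linking numbers in~$\Sigma_m$, using a closed formula for the inverse of the $(2m-2)\times(2m-2)$ linking matrix of the branched-cover surgery link.
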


\begin{proof}[Proof of Theorem~\ref{theorem:d-invariant}]
  Let $W$ and $\mathfrak{t}$ be those given by Lemma~\ref{lemma:the-cobordism-W}.
  We will first prove that $\mathfrak{t}|_{\Sigma_m}=\mathfrak{s}_{\Sigma_m}+2^{m-1}\hat{x}_1$
  and will obtain the desired conclusion by applying the $d$-invariant inequality to $(W,\mathfrak{t})$.
  We have $c_1(\mathfrak{t}|_{\Sigma_m})=\hat{x}_1$ and
  $c_1(\mathfrak{t}|_{Y_m})=0$ by~\ref{item:spinc-structure-on-W}.
  By 
  Remark~\ref{remark-spinc}~\eqref{item-remark-spinc-1} and~\eqref{item-remark-spinc-2}, and since $x_1$ and therefore $\hat{x}_1$ has order $2^m-1$, we have
  \[
    c_1(\mathfrak{s}_{\Sigma_m}+2^{m-1}\hat x_1) = 2^m\cdot \hat x_1 =
    \hat x_1 = c_1(\mathfrak{t}|_{\Sigma_m}).
  \]
  It follows that
  $\mathfrak{t}|_{\Sigma_m}=\mathfrak{s}_{\Sigma_m}+2^{m-1}\hat{x}_1$, since
  $c_1$ is 1-1 as mentioned in Remark~\ref{remark-spinc}~\eqref{item-remark-spinc-2}.

  Since $W$ is negative definite, the $d$-invariant inequality of
  Ozsv\'ath-Szab\'o~\cite[Theorem~9.6]{Ozsvath-Szabo:2003-2} gives
  \begin{equation*}
    \label{equation:inequality}
    d(Y_m,\mathfrak{t}|_{Y_m}) -
    d(\Sigma_m,\mathfrak{s}_{\Sigma_m}+2^{m-1}\hat{x}_{1})
    \ge \frac{c_1(\mathfrak{t})^2+\beta_2(W)}{4}=\frac{2m-3}{4},
  \end{equation*}
  where the equality follows from
  Lemma~\ref{lemma:the-cobordism-W}~\ref{item:negative-definiteness-of-W}
  and~\ref{item:spinc-structure-on-W}.  
  
  The value of $d(Y_m,\mathfrak{t}|_{Y_m})$ is computed by using known
  results, as described below.  For brevity, denote
  $A:=S^3_{-7}(D\#-T)$ and $B:=S^3_{-6}(-T)$ temporarily, so that
  $Y_m=\big((m-3)L(3,1)\big)\#A\#B$. Since
  $c_1(\mathfrak{t}|_{Y_m})=0$, $\mathfrak{t}$ restricts to a \spinc\
  structure whose first Chern class is trivial on each summand.
  By Remark~\ref{remark-spinc}~\eqref{item-remark-spinc-2}, $\mathfrak{t}|_{L(3,1)} = \mathfrak{s}_{L(3,1)}$ and
  $\mathfrak{t}|_{A} = \mathfrak{s}_{A}$ since $L(3,1)$ and $A$ are $\Z_2$-homology spheres.
  By the recursive $d$-invariant formula for lens spaces given in
  \cite[Proposition~4.8]{Ozsvath-Szabo:2003-2},
  $d(L(3,1),\mathfrak{s}_{L(3,1)})=\frac12$.  Due to Cochran, Harvey
  and Horn~\cite[p.~2151]{Cochran-Harvey-Horn:2012-1},
  $d(A,\mathfrak{s}_{A})= -\frac32$.  Using the formula of Owens and
  Strle for $L$-space knots~\cite[Theorem~6.1]{Owens-Strle:2012-1}, we
  have $d(B,\mathfrak{t}|_B)\le \frac 34$.  It follows that
  \[
    d(Y_m,\mathfrak{t}|_{Y_m}) = (m-3)\cdot
    d(L(3,1),\mathfrak{s}_{L(3,1)}) + d(A,\mathfrak{s}_{A}) +
    d(B,\mathfrak{t}|_{B}) \leq \frac{2m-9}{4}.
  \]
  Combining the above, we obtain
  \[
    d(\Sigma_m,\mathfrak{s}_{\Sigma_m}+2^{m-1}\hat{x}_{1})\leq \frac{2m-9}{4} -
    \frac{2m-3}{4} = -\frac{3}{2}.
  \]
  This completes the proof of Theorem \ref{theorem:d-invariant} modulo
  the proof of Lemma~\ref{lemma:the-cobordism-W}.
\end{proof}

\section{The cobordism $W$ from $\Sigma_m$ to $Y_m$}
\label{section:the-cobordism-W}

In this section, we describe the construction of $W$ and perform some
computation on $W$ to prove Lemma~\ref{lemma:the-cobordism-W}.

\subsection{Construction of $W$}
\label{subsection:description-of-W}

\begin{figure}[t]
  \includestandalone{covering-surgery-diagram}
  \caption{A surgery presentation of $\Sigma_m$ consisting of 0-framed
    curves, drawn with additional curves~$v_i$.}
  \label{figure:covering-surgery-diagram}

  \vskip\floatsep
  
  \includestandalone{unknotting-surgery-curve}
  \caption{The curve $v_i$ ($i=3m-4,\ldots,4m-6$) inside the $D$ box,
    where $-3$ and $-2$ designate negative full twists of the involved
    strands.}
  \label{figure:unknotting-surgery-curve}
\end{figure}

Consider the diagram in Figure~\ref{figure:covering-surgery-diagram},
which consists of $(2m-2)$ 0-framed curves and additional curves
$v_1,\ldots,v_{3m-5}$.
The labels $v_{3m-4},\ldots,v_{4m-6}$ will be
explained in the next paragraph.  For now, ignore the arrows and
labels~$x_i$; they will be used in
Section~\ref{subsection:proof-of-properties-of-W}.  The 0-framed
curves form a standard surgery diagram of the $m$-fold cyclic branched
cover $\Sigma_m$ of $K_0$, which is obtained from
Figure~\ref{figure:base-seed-knot} using the Akbulut-Kirby
method~\cite{Akbulut-Kirby:1979-1}.  For later use, note that
$v_{3m-5}$ is (isotopic to) a lift of the curve $\alpha_J$ in
Figure~\ref{figure:base-seed-knot}.  So, choosing appropriate
basepoints, we may assume that the surjection
$H_1(M(K_0);\Z[t^{\pm1}]) \to H_1(\Sigma_m)$ takes $[\alpha_J]$
to~$[v_{3m-5}]$, that is, $[v_{3m-5}]$ is equal to
$x_1\in H_1(\Sigma_m)$ used in
Section~\ref{section:using-d-invariant}.

Regard $v_1,\ldots,v_{3m-5}$ as curves in~$\Sigma_m$.  The following
observation will be useful: $(-1)$-surgery along $v_1,\ldots,v_{3m-6}$
changes the enclosed crossings (at the cost of framing changes of the
$0$-framed components), and after the crossing changes, we would be
able to isotope the resulting $2m-2$ curves into $m-1$ split
2-component links, if $D$ were trivial.  Furthermore, using the fact
that the Whitehead double $D$ is unknotted by changing a positive
crossing, we could also do additional $(-1)$ surgeries to remove the
$D$ boxes.  More precisely, for each of the $D$ boxes except the
leftmost one, let $v_i$ ($i=3m-4,\ldots,4m-6$) be the curve inside the
box shown in Figure~\ref{figure:unknotting-surgery-curve}.  (For the
rightmost $D$ box, replace the double strands in
Figure~\ref{figure:unknotting-surgery-curve} with a single strand.)
Enumerate them from right to left, as indicated by the labels
$v_{3m-4},\ldots,v_{4m-6}$ in
Figure~\ref{figure:covering-surgery-diagram}.  The $(-1)$-surgery
along $v_i$ ($i=3m-4,\ldots,4m-6$) would eliminate the enclosing $D$
box.

Now, take $\Sigma_m\times[0,1]$, and attach $(4m-6)$ 2-handles to
$\Sigma_m\times 1$ along the $(-1)$-framing of the curves
$v_1,\ldots,v_{4m-6}$, to obtain a 4-manifold which we temporarily
call~$N$.  The promised 4-manifold $W$ will be obtained by attaching
$m-3$ additional 3-handles to~$N$.  The attaching 2-spheres are described as
follows.  Regard Figure~\ref{figure:covering-surgery-diagram}, with
each $v_i$ $(-1)$-framed, as a surgery diagram of the upper boundary
of $N$ ($=\partial N\sm \Sigma_m$).  Use the effect of the
$(-1)$-surgery discussed above, and perform isotopy, to obtain a
simplified surgery diagram of the upper boundary shown in
Figure~\ref{figure:after-m1-surgery}.  For each of the middle $m-3$
sublinks with two $3$-framed components, perform handle slide of one
of the components over the other.  This gives the surgery diagram in
Figure~\ref{figure:after-handle-slides}.  The $0$-framed unknotted
circles in this diagram give $(m-3)$ $S^1\times S^2$ summands of the
upper boundary of~$N$.  Attach, to $N$, $(m-3)$ 3-handles along the
$S^2$ factors of these summands.  The result is our 4-manifold~$W$.

\begin{figure}[H]
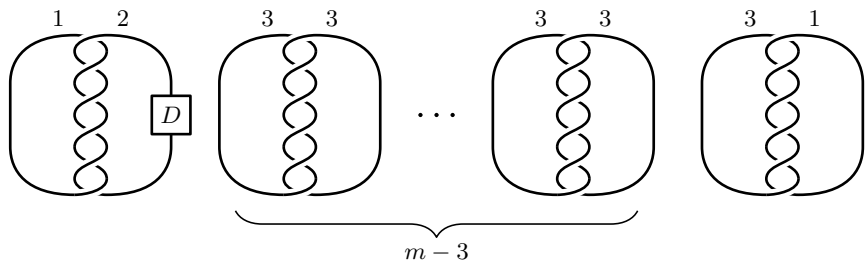

  \includestandalone{after-m1-surgery}
  \caption{The result of $(-1)$-surgery (or blowing down) along
    the~$v_i$.}
  \label{figure:after-m1-surgery}
\end{figure}

\begin{figure}[H]
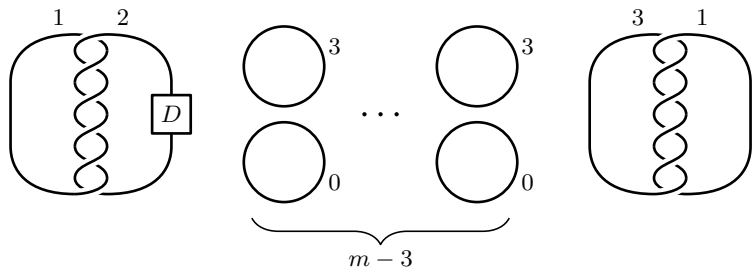

  \includestandalone{after-handle-slides}  
  \caption{The result of handle slides.}
  \label{figure:after-handle-slides}
\end{figure}

We need to verify that the upper boundary of~$W$ is~$Y_m$.  Since the
3-handle attachments eliminate the $(m-3)$ 0-framed unknotted
circles, Figure~\ref{figure:after-handle-slides} with the 0-framed
circles removed is a surgery description of the upper boundary of~$W$.
By the two $(+1)$-surgeries in the diagram, it becomes the connected
sum of $(-7)$-surgery on $D\#-T$, $(-6)$-surgery on $-T$, and $m-3$
copies of~$L(3,1)$.  This is the 3-manifold $Y_m$, as desired.

\subsection{Definiteness and \spinc\ structure computation for $W$}
\label{subsection:proof-of-properties-of-W}

This subsection is devoted to the proof of
Lemma~\ref{lemma:the-cobordism-W}.  Recall that
Lemma~\ref{lemma:the-cobordism-W}~\ref{item:negative-definiteness-of-W}
asserts that $W$ is negative definite and $b_2(W)=3m-3$.  Since an
explicit handle decomposition of $W$ is given, one may try to compute
directly the second homology and intersection form to prove the
assertions.  Though, after several attempts, we learned that proving
the desired definiteness \emph{for all $m$} by direct computation was
difficult, if feasible, because of the growth of the size and
sophistication of the intersection matrix.

In what follows, we present a different approach.  The idea is to
obtain an ``easier'' 4-manifold (which we call $\tilde W$ below) by
attaching another ``easier'' 4-manifold (which we call $W_0$ below) to
$W$, motivated from the above surgery diagram calculus of 3-manifolds.
Then we investigate $W$ as the difference of the two easier ones.

\begin{proof}[Proof of
  Lemma~\ref{lemma:the-cobordism-W}~\ref{item:negative-definiteness-of-W}]

  Let $W_0$ be the 4-manifold with one 0-handle and $(2m-2)$ 2-handles
  attached along the 0-framed curves shown in
  Figure~\ref{figure:covering-surgery-diagram}.  That is,
  Figure~\ref{figure:covering-surgery-diagram} with the $v_i$
  forgotten is now viewed as a Kirby diagram of~$W_0$.  It is
  straightforward that $\partial W_0=\Sigma_m$.  Let
  $\tilde W=W_0 \cupover{\Sigma_m} W$. See the schematic diagram in Figure~\ref{figure:W-tilde} (for now, ignore the thick arcs and labels $a_i^{\vphantom{1}}\sigma_i'$, $a_j^{\vphantom{1}}\sigma_j'$, $z_i'$, $z_j'$ and $a_j^{\vphantom{1}} v_j'$, which will be used later).

\begin{figure}[H]
  \includestandalone{W-tilde}
  \caption{
  The manifolds $W$ and $\tilde{W}=W_0\cupover{\Sigma_m}W$.
  Recall that $W$ is obtained by attaching, to $\Sigma_m\times [0,1]$, $(4m-6)$ 2-handles and $(m-3)$ 3-handles where the 2-handles are attached along the curves $v_i\times 1 \subset \Sigma_m\times 1$.  
  Thick arcs schematically represent 2-cycles $E_i'=a_i^{\protect\vphantom1}\sigma_i'-z_i'$ and $E_j'=a_j^{\protect\vphantom1}\sigma_j'-z_j'$, which will be described and used in the proof of Lemma~\ref{lemma:dual-of-w-is-characteristic}.
  }
  \label{figure:W-tilde}
\end{figure}

By a Mayer-Vietoris argument, using that $\Sigma_m$ is a rational
  homology sphere, we obtain
  \begin{equation}
    \label{equation:additivity-b2}
    b_2(\tilde W) = b_2(W) + b_2(W_0).
  \end{equation}
  Also, by Novikov additivity, the signatures are related
  as follows:
  \begin{equation}
    \label{equation:additivity-signature}
    \sign(\tilde W) = \sign(W) + \sign(W_0).
  \end{equation}

  First, we compute $b_2(W_0)$ and $\sign(W_0)$.  Since $W_0$ consists
  of $(2m-2)$ 2-handles and one 0-handle, $b_2(W_0)=2m-2$.  By
  Akbulut-Kirby~\cite{Akbulut-Kirby:1979-1}, $W_0$ is the $m$-fold
  cyclic cover of $D^4$ branched along the surface obtained by pushing
  into $D^4$\ the interior of the obvious Seifert surface of genus one
  for $K_0=R(U,D)$.  Therefore, by a well known fact (see, for
  instance, \cite{Viro:1973-1} and \cite[Section~12]{Gordon:1978-1}),
  $\sign(W_0)$ is determined by the Levine-Tristram signature function
  $\sigma_{K_0}$, as follows:
  \[
    \sign(W_0)=\sum_{k=0}^{m-1}\sigma_{K_0}(e^{2\pi k\sqrt{-1}/m}).
  \]
  Since $K_0$ is slice, $\sigma_{K_0}(\omega)=0$ when $\omega$ is a
  root of unity of prime power order.  It follows that
  $\sign(W_0)=0$.

  To compute $\beta_2(\tilde W)$ and $\sign(\tilde W)$, we use an
  alternative description of $\tilde W=W_0\cupover{\Sigma_m}W$.  By
  definition, $\tilde W$ consists of one 0-handle, $(2m-2)$ 2-handles
  attached along the 0-framed curves in
  Figure~\ref{figure:covering-surgery-diagram}, $(4m-6)$ 2-handles
  attached along the $(-1)$-framing of the curves
  $v_1,\ldots,v_{4m-6}$ in
  Figure~\ref{figure:covering-surgery-diagram}, and additional $(m-3)$
  3-handles.  Blow down $\tilde W$ $(4m-6)$ times, to realize the effect
  of the $(-1)$ surgery.  This transforms
  Figure~\ref{figure:covering-surgery-diagram} (with each $v_i$
  $(-1)$-framed) to Figure~\ref{figure:after-m1-surgery}.  That is, we
  have
  \begin{equation}
    \label{equation:blowdown-on-W}
    \tilde W = W' \# \big((4m-6)\overline{\C P^2}\big)
  \end{equation}
  where the result $W'$ of blowing down is given by
  Figure~\ref{figure:after-m1-surgery}, viewed as a 4-manifold Kirby
  diagram now, with additional $(m-3)$ 3-handles.  By handle slides
  and cancellations of 3-handles with 2-handles, it follows that
  Figure~\ref{figure:after-handle-slides} with the $(m-3)$ 0-framed
  circles removed is a Kirby diagram for~$W'$ (without 3-handles).

  Now, since our final Kirby diagram consists of $(m+1)$ 2-handles
  without any 1- and 3-handles, $b_2(W')=m+1$.  Also, the intersection
  matrix for $W'$ is the direct sum of $\sbmatrix{1 & 3\\ 3 & 2}$,
  $\sbmatrix{3 & 3\\ 3 & 1}$ and $m-3$ copies of the $1\times 1$
  matrix~$[\,3\,]$.  Since the two $2\times 2$ submatrices have
  vanishing signatures, $\sign(W')=m-3$.  From this and
  \eqref{equation:blowdown-on-W}, it follows that $b_2(\tilde W)=5m-5$
  and $\sign(\tilde W)=-3m+3$.

  By substituting the above values into \eqref{equation:additivity-b2}
  and \eqref{equation:additivity-signature}, it follows that
  $b_2(W)=3m-3$ and $\sign(W)=-3m+3$.  This completes the proof of
  Lemma~\ref{lemma:the-cobordism-W}~\ref{item:negative-definiteness-of-W}.
\end{proof}

In the proof of
Lemma~\ref{lemma:the-cobordism-W}~\ref{item:spinc-structure-on-W}, the
following lemma is essential.  Recall that $W$ has $(4m-6)$ 2-handles
attached to $\Sigma_m \times [0,1]$ along the $(-1)$-framed curves
$v_i\times 1$ (see Figures~\ref{figure:covering-surgery-diagram} and 
\ref{figure:W-tilde}).  For
$1\le i \le 4m-6$, let $\sigma_i$ be the union of $v_i\times [0,1] \subset \Sigma_m\times[0,1]$
and the core of the 2-handle attached to $v_i\times 1$.
Orient $v_i$ in
Figure~\ref{figure:covering-surgery-diagram} counterclockwise, and
orient the 2-disk $\sigma_i$ in such a way that $\partial\sigma_i=v_i\times 0$. That is, homologically, the image of the relative class $[\sigma_i]$ under the boundary map
\[
  \partial\colon H_2(W,\partial W)\to H_1(\partial W)=H_1(\Sigma_m)
  \oplus H_1(Y_m)
\]
is $([v_i],0)$.  Let
$w=[\sigma_{3m-5}] +\cdots+ [\sigma_{4m-6}] \in H_2(W,\d W)$.

\begin{lemma}
  \label{lemma:dual-of-w-is-characteristic}
  The Poincar\'e dual $\hat w\in H^2(W)$ is characteristic and
  satisfies $\hat w ^2 = -m$.
\end{lemma}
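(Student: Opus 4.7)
The plan is to transport the computation to the closed-up manifold $\hat W = W \cup_{\Sigma_m} W_0$ via the diffeomorphism $\hat W \cong W' \csum (4m-6)\ol{\C P^2}$ from the proof of~\ref{item:negative-definiteness-of-W}. Each $(-1)$-framed attaching circle $v_i$ in the Kirby diagram of $\hat W$ is unknotted, so it bounds an embedded disk $\Delta_i$ that can be pushed into the $0$-handle of $W_0$. Gluing $\Delta_i$ to $\sigma_i$ produces a sphere representing the exceptional class $E_i \in H_2(\hat W)$ of the $i$-th $\ol{\C P^2}$-summand, with $E_i \cdot E_j = -\delta_{ij}$. The disks $\Delta_{3m-5}, \ldots, \Delta_{4m-6}$ can be arranged to be pairwise disjoint, since $v_{3m-5}$ lies outside all $D$-boxes and the remaining $v_i$ sit inside distinct $D$-boxes of Figure~\ref{figure:covering-surgery-diagram}.

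For the self-intersection, set $\tilde w := \sum_{i=3m-5}^{4m-6} [E_i] \in H_2(\hat W)$, so $\tilde w \cdot \tilde w = -m$ by orthogonality of the exceptional classes. Since $\Sigma_m$ is a rational homology sphere, Mayer--Vietoris yields an orthogonal decomposition $H_2(\hat W;\Q) \cong H_2(W;\Q) \oplus H_2(W_0;\Q)$. Under this splitting, $\tilde w = \bar w + \zeta$, where $\bar w \in H_2(W;\Q)$ is the rational lift of $w \in H_2(W,\partial W;\Q)$ obtained by capping off $\sum[v_i] = x_1 \in H_1(\Sigma_m)$ (torsion, hence zero rationally) by a rational $2$-chain $\tau \subset \Sigma_m$, and $\zeta$ is represented by $\sum \Delta_i + \tau$ in $W_0$. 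Because the $\Delta_i$ are disjoint disks in the $0$-handle (each of zero self-intersection), $\tau$ can be pushed into the collar of $\Sigma_m$ in $W_0$ to be disjoint from the $\Delta_i$, and $\tau$ in a product collar has zero self-intersection, we obtain $\zeta \cdot \zeta = 0$. Hence $\hat w \cdot \hat w = \bar w \cdot \bar w = \tilde w \cdot \tilde w - \zeta \cdot \zeta = -m$.

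For the characteristic property, the restriction $H^2(\hat W) \to H^2(W)$ is surjective, since the obstruction $H^3(\hat W, W) \cong H^3(W_0, \Sigma_m) \cong H_1(W_0) = 0$ by excision and Lefschetz duality. Using the decomposition $\hat W \cong W' \csum (4m-6)\ol{\C P^2}$, construct a \spinc{} structure $\tilde{\mathfrak{t}}$ on $\hat W$ as follows: take any characteristic class $c_{W'} \in H^2(W')$ (which exists because $W'$'s intersection form admits characteristic vectors), and on each $\ol{\C P^2}$-summand take $c_1 = -E_i^*$ for $i = 3m-5,\ldots,4m-6$ and $c_1 = +E_i^*$ otherwise. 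The resulting $c_1(\tilde{\mathfrak{t}})$ is characteristic on $\hat W$. Restricting to $W \subset \hat W$, and identifying $c_1(\tilde{\mathfrak{t}})|_W$ with $\hat w$ via the Mayer--Vietoris analysis above (namely, the contributions of $H_2(W')$ and of the $E_i$ for $i < 3m-5$ to the restriction lie in the $H_2(W_0)$-part and therefore restrict trivially to the $H_2(W)$-part, while the remaining exceptional contributions sum to $w$), we conclude that $\hat w$ is the restriction of a characteristic class and hence characteristic on $W$ itself by naturality of $w_2$.

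The main obstacle is the last identification in Step~3: while the Mayer--Vietoris splitting $H_2(\hat W;\Q) = H_2(W;\Q) \oplus H_2(W_0;\Q)$ is clean, the corresponding cohomological decomposition requires care, and one must verify that the choices in $c_{W'}$ and the signs on the $E_i^*$ for $i < 3m-5$ do not contribute to $\hat w$ under restriction. An alternative route, which may prove simpler in execution, is to construct a \spinc{} structure $\mathfrak{t}$ on $W$ directly handle-by-handle: start with the canonical \spinc{} structure on $\Sigma_m \times I$ from the unique spin structure on the $\Z_2$-homology sphere $\Sigma_m$ (noting $H^2(\Sigma_m \times I) = 0$), then extend over each $(-1)$-framed $2$-handle by choosing the extension parameter to be an odd integer realizing the pairing $\langle \hat w, \sigma_i \rangle$; the $3$-handles impose no further obstruction. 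Either approach must ultimately confirm that $\hat w$ reduces mod $2$ to $w_2(W)$.
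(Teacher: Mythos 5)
Your proposal takes a genuinely different route from the paper: you work in the closed-up manifold $\hat W = W_0 \cup_{\Sigma_m} W$, identify $\hat W \cong W' \csum (4m-6)\ol{\C P^2}$, and try to read off both $\hat w^2$ and characteristicity from the exceptional spheres together with the orthogonal Mayer--Vietoris splitting. The paper instead multiplies by $2^m-1$ to clear the torsion, produces an explicit absolute cycle representing $w_0$, and reduces everything to linking numbers in $\Sigma_m$, computed from the closed-form inverse $P^{-1}$ of the linking matrix of the surgery diagram. Unfortunately your route, as written, has a genuine gap at the crucial step, and it hides exactly the computation that the paper has to do.

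The gap is the claim $\zeta\cdot\zeta = 0$. You argue this by disjointness: the $\Delta_i$ are disjoint disks of ``zero self-intersection'' in the $0$-handle, and $\tau$ can be pushed into the collar. But $\zeta=\sum\Delta_i \pm \tau$ is not null-homologous in $W_0$: since $[v_{3m-5}]=x_1$ is a meridian of the first surgery component, $\Delta_{3m-5}$ meets the generator $F_1\in H_2(W_0)$ once, so $\zeta\cdot F_1 = 1\neq 0$. Writing $\zeta=\sum_k c_k F_k$ with $Pc=e_1$ (where $P$ is the linking matrix of Figure~\ref{figure:covering-surgery-diagram}), one gets $\zeta\cdot\zeta = c^{\mathsf T}Pc = e_1^{\mathsf T}P^{-1}e_1 = (P^{-1})_{11}$. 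This is zero \emph{only because} the $(1,1)$-entry of $P^{-1}$ in \eqref{equation:inverse-of-linking-matrix} vanishes --- a nontrivial consequence of the block structure (the $A_0$ blocks have zero diagonal), ultimately tracing back to $\lambda(x_1,x_1)=0$, i.e.\ the Blanchfield/linking-form structure of $R=9_{46}$. The disjointness picture cannot see this: the cross-terms $\Delta_i\cdot\tau'$ are governed by the linking of the $v_j$ in $\Sigma_m$ and do not vanish for a generic linking matrix, so your argument, taken at face value, would prove $\zeta^2=0$ for any surgery presentation, which is false. The characteristicity step has the related difficulty you already flag: to verify that your chosen characteristic class on $\hat W$ restricts to $\hat w$ on $W$, you must control the projections of the $W'$-characteristic vector and of the unused exceptional classes to the $H_2(W;\Q)$-summand, and doing this carefully again reduces to the same linking-matrix data that the paper computes directly via $E_i\cdot E_0\equiv E_i\cdot E_i\pmod 2$ and the parity pattern of $(2^m-1)P^{-1}$. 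So the strategy is workable, but the essential linking-number computation cannot be avoided, and the disjointness shortcut does not replace it.
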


\begin{proof}[Proof of
  Lemma~\ref{lemma:the-cobordism-W}~\ref{item:spinc-structure-on-W}]
  
  Since $\hat w$ in Lemma~\ref{lemma:dual-of-w-is-characteristic} is
  characteristic, there is a \spinc\ structure $\mathfrak{t}$ on $W$
  such that $c_1(\mathfrak{t})=\hat{w}$ by Remark~\ref{remark-spinc}~\eqref{item-remark-spinc-3}.  We will verify that
  $\mathfrak{t}$ satisfies the desired properties
  $c_1(\mathfrak{t})^2=-m$ and
  $c_1(\mathfrak{t}|_{\d W}) = (\hat x_1,0)\in H^2(\partial
  W)=H^2(\Sigma_m)\oplus H^2(Y_m)$.

  First, $c_1(\mathfrak{t})^2=\hat{w}^2=-m$.  By naturality, we have
  \[
    c_1(\mathfrak{t}|_{\d W}) = c_1(\mathfrak{t})|_{\d W} = \hat
    w|_{\partial W} = \widehat{\partial w}
  \]
  where $\widehat{\partial w}\in H^2(\partial W)$ is the Poincar\'e
  dual of $\d w\in H_1(\d W)$.  By the first paragraph of
  Section~\ref{subsection:description-of-W}, $[v_{3m-5}]=x_1$ and thus
  $\d [\sigma_{3m-5}] = (x_1,0)$.  For $i\ge 3m-4$, we have
  $\d [\sigma_{i}] =([v_i],0)=0$, since $v_i$ has linking number zero
  with other surgery curves in
  Figure~\ref{figure:unknotting-surgery-curve}.  It follows that
  $\d w = (x_1,0)$.  Therefore
  $c_1(\mathfrak{t}|_{\d W})=(\hat{x}_{1},0)$.
\end{proof}

The remaining part of this section is devoted to proving
Lemma~\ref{lemma:dual-of-w-is-characteristic}, which is largely
computations of intersection data in terms of rational linking
numbers.

\begin{proof}[Proof of Lemma~\ref{lemma:dual-of-w-is-characteristic}]
  
  Let $w_0=(2^m-1)w\in H_2(W,\d W)$.
  Since $2^m-1$ is odd, $\hat w$ is characteristic if and only if so is $\hat w_0$, by the mod 2 defining property of a characteristic class in Remark~\ref{remark-spinc}~\eqref{item-remark-spinc-3}.
  Also, 
  $\hat{w}_0^2=(2^m-1)^2 \hat w^2$, by the definition of $(-)^2$ in Remark~\ref{remark-spinc}~\eqref{item-remark-spinc-4}.
  So, Lemma~\ref{lemma:dual-of-w-is-characteristic} is equivalent to that
  $\hat{w}_0$ is characteristic and
  $\hat{w}_0^2=-(2^m-1)^2m$.
  We will prove this.

  First, we construct a (non-relative) cycle representative $E_0$ of
  the class~$w_0$, that is, $j[E_0]=w_0$ where 
  $j\colon H_2(W)\to H_2(W, \d W)$ is the
  inclusion-induced map.  Let
  \[
    a_i=\begin{cases}
      2^m-1&\textrm{for }i=1,\ldots,3m-5,
      \\
      1&\textrm{for }i=3m-4,\ldots,4m-6.
    \end{cases}
  \]
  Then $a_iv_i$ is null-homologous in $\Sigma_m$ for all~$i$.  Indeed,
  for $i\le 3m-5$, it is straightforward since
  $H_1(\Sigma_m) \cong \Z_{2^m-1}\oplus\Z_{2^m-1}$, and for
  $i\ge 3m-4$, $v_i$ shown in Figure~\ref{figure:unknotting-surgery-curve} is null-homologous since $v_i$ has linking number
  zero with other surgery curves in
  Figure~\ref{figure:covering-surgery-diagram}.  Choose a 2-chain
  $z_i$ in $\Sigma_m\subset W$ whose oriented boundary is~$a_iv_i$. Let
   $E_i$ be the $2$-cycle $a_i\sigma_i-z_i$ in~$W$. 
   
  Recall that $W$ is obtained by attaching 2-handles to $\Sigma_m\times
  [0,1]$ along the $v_i$.  It can be seen that the classes $[E_i]$
  ($i=1,\ldots,4m-6$) generate a subgroup $\langle [E_i]\rangle$ with
  odd index in~$H_2(W)$. In fact, since $H_2(\Sigma_m)=H_1(W)=0$, a
  Mayer-Vietoris argument gives us
  \[
    0\to H_2(W)\to \Z^{4m-6} \to H_1(\Sigma_m) \to 0.
  \]
  So $H_2(W)$ is a subgroup of $\Z^{4m-6}$ with index equal to
  $|H_1(\Sigma_m)|=(2^m-1)^2$.  By the definition of $E_i$ and $a_i$, we
  have $[\Z^{4m-6}:\langle [E_i]\rangle]=(2^m-1)^{3m-5}$.  It follows
  that $[H_2(W):\langle [E_i] \rangle]=(2^m-1)^{3m-3}$, as claimed.
  
  Let
  \begin{equation}
    E_0=E_{3m-5} + (2^m-1)(E_{3m-4}+\cdots+E_{4m-6}).
    \label{equation:definition-of-E0}
  \end{equation}
  Since $j[E_i]=a_i[\sigma_i]$, we have
  $j[E_0]=(2^m-1)([\sigma_{3m-5}]+\cdots+[\sigma_{4m-6}])=w_0$. It
  follows that $\hat{w}_0^2=E_0\cdot E_0$ and $\hat{w}_0(E_i)=E_i\cdot
  E_0$, where $\cdot$ denotes the intersection in~$W$.  (As usual, the
  intersection of two chains is computed by taking a pushoff of one of
  them which is transverse to another.)  We will verify that 
  \begin{equation}
    \label{equation:spinc-a}
    \begin{aligned}
    E_0\cdot E_0 &=-(2^m-1)^2m, \\
    E_i\cdot E_0 &\equiv E_i\cdot E_i\pmod{2}
    \end{aligned}
  \end{equation}
  for $i=1,\ldots,4m-6$. Since $\langle [E_i]\rangle$ has odd index in
  $H_2(W)$, the second identity in \eqref{equation:spinc-a} implies that
  $\widehat w_0$ is characteristic. See Remark~\ref{remark-spinc}\eqref{item-remark-spinc-3}.  So the verification of
  \eqref{equation:spinc-a} completes the proof of
  Lemma~\ref{lemma:dual-of-w-is-characteristic}.

  Recall that for every pair of two disjoint 1-cycles $(\alpha,\beta)$
  in a rational homology 3-sphere~$\Sigma$, the linking number
  $\lk_{\Sigma}(\alpha, \beta)\in \Q$ is defined as follows: if $u$ is
  a 2-chain bounded by $r\alpha$ in $\Sigma$ for some nonzero integer
  $r$, then $\lk_{\Sigma}(\alpha, \beta) = \frac1r (u\circ \beta)$
  where $\circ$ denotes temporarily the intersection in~$\Sigma$.

  In our case, recall that $E_i = a_i\sigma_i -z_i$, where $\sigma_i$ is the union of $v_i \times[0,1] \subset \Sigma_m \times [0,1] \subset W$ and the core of the 2-handle attached along $v_i\times 1\subset \Sigma_m\times 1$, oriented such that $\partial\sigma_i=v_i\times 0$, and $z_i$ is a 2-chain in $\Sigma_m=\Sigma_m\times 0$ such that $\partial z_i = a_i v_i$.
  Fix $i$ and~$j$ ($1\le i$, $j\le 4m-6$). One can push $E_i$ and $E_j$ slightly, to obtain transverse 2-cycles $E_i'$ and $E_j'$ depicted as thick arcs in Figure~\ref{figure:W-tilde}, for which we have
  \begin{equation}
    E_i\cdot E_j = E_i'\cdot E_j' = z_i\circ (a_j^{\vphantom1}v_j') =
    a_ia_j\operatorname{lk}_{\Sigma_m}(v_i^{\vphantom1},v_j'),
    \label{equation:intersection-to-linking}   
  \end{equation}
  where $v_j'$ denotes a pushoff of $v_j$ in $\Sigma_m$ taken along the $(-1)$-framing.
  More details are as follows.
  Deform $E_i$ to get a homologous cycle $E_i' = a_i^{\vphantom1}\sigma_i' - z_i'$, where 
  $z_i'$ is obtained by pushing $z_i\subset \Sigma_m=\Sigma_m\times 0 \subset \partial W$ slightly into the interior of $W$, and $\sigma_i'$ is obtained by removing a collar of $\partial \sigma_i=v_i\times 0$ from~$\sigma_i$.
  Also deform $E_j$ to get a homologous cycle $E_j'=a_j^{\vphantom1}\sigma_j' - z_j'$, where $\sigma_j'$ is obtained by pushing the 2-disk $\sigma_j$ along its unique framing in $W$, and $z_j'$ is obtained from $z_j$ by attaching annuli cobounded by $-\partial z_j$ and~$a_j(\partial\sigma_j')$.
  See Figure~\ref{figure:W-tilde}.
  Note that $\partial \sigma_j'=v_j'\times 0$, since the 2-handles are attached along the $(-1)$-framing. 
  Now, $\sigma_i' \cdot \sigma_j' = \sigma_i' \cdot z_j' = z_i' \cdot z_j' =0$, and so $E_i'\cdot E_j' = (-z_i') \cdot (a_j^{\vphantom{1}}\sigma_j')$ in~$W$.
  This intersection is equal to the intersection $-(-z_i) \circ (a_j^{\vphantom{1}} v_j')$ in~$\Sigma_m=\Sigma_m\times 0$ (see Figure~\ref{figure:W-tilde} again), where the additional~$-$ sign is needed since the orientation of $\Sigma_m$ is opposite to the boundary orientation on~$\partial W$.
  From this and the definition of $\lk_{\Sigma_m}$, \eqref{equation:intersection-to-linking} is obtained immediately.

  Using \eqref{equation:definition-of-E0} and
  \eqref{equation:intersection-to-linking}, it follows that
  \eqref{equation:spinc-a} is equivalent to the following linking
  number conditions:
  \begin{gather}
    \label{equation:linking-condition-1}
    \sum_{i=3m-5}^{4m-6} \sum_{j=3m-5}^{4m-6}
    \lk_{\Sigma_m}(v_i^{\mathstrut},v_j') = -m,
    \\
    \label{equation:linking-condition-2}
   \sum_{j=3m-5}^{4m-6}a_ia_j
    \lk_{\Sigma_m}(v_i^{\mathstrut},v_j') \equiv a_i^2 
    \lk_{\Sigma_m}(v_i^{\mathstrut},v_i') \pmod{2} \quad\text{ for all
      $i$}.
  \end{gather}
  
  This reduces the proof to a purely 3-dimensional computation.  It is
  known that the linking number in a rational homology 3-sphere can be
  explicitly computed by using the linking matrix.  (See, for
  instance,~\cite[Theorem~3.1]{Cha-Ko:2000-1}.)  To apply this to our
  case, let $L$ be the link consisting of the 0-framed curves in
  Figure~\ref{figure:covering-surgery-diagram}, from which $\Sigma_m$
  is obtained by surgery.  Orient components of $L$ along the arrows
  in Figure~\ref{figure:covering-surgery-diagram}, and let $x_i$ be
  the positively oriented meridian of the $i$th component (the one
  with label~$x_i$ in Figure~\ref{figure:covering-surgery-diagram}).
  
  Let $P$ be the linking matrix for~$L$.  That is, the $(i,j)$-entry
  is the linking number, in~$S^3$, of the $i$th component and a
  pushoff of the $j$th component taken along the framing.  Then $P$ is
  invertible over $\Q$ since $\Sigma_m$ is a rational homology sphere.  
  With respect to the basis~$\{x_i\}$, $P^{-1}$ gives rise to a well-defined 
  symmetric $\Q$-valued $\Z$-bilinear pairing
  \[R\colon H_1(S^3\sm L)\times H_1(S^3\sm L) \to \Q.\]  That is, define
  $R(x_i,x_j)$ to be the $(i,j)$-entry of $P^{-1}$, and expand it
  bilinearly. Then, for disjoint 1-cycles $\alpha,\beta$ in
  $S^3 \sm L$, we have
  \begin{equation}
    \label{equation:linking-computation}  
    \lk_{\Sigma_m}(\alpha,\beta)=\lk_{S^3}(\alpha,\beta) -
    R(\alpha,\beta).
  \end{equation}

  From Figure~\ref{figure:covering-surgery-diagram}, it is seen that
  $P$ is given as the following matrix, which consists of
  $(m-1)\times (m-1)$ blocks of size $2\times 2$:
  \[
    P={\arraycolsep=4pt\def\arraystretch{1.2}\begin{bmatrix}
        3A_0 & -A_1 & \\
        -A_1^T & 3A_0 & -A_1 \\
        & -A_1^T & 3A_0 & \smash{\ddots} \\
        & & \ddots & \ddots & -A_1 \\
        & & & -A_1 & 3A_0 \\
      \end{bmatrix}},
  \]
  where $A_r:=\sbmatrix{ 0&2^r \\ 1&0}$.  The inverse of $P$ is the
  following symmetric matrix consisting of $(m-1)\times (m-1)$ blocks:
  \begin{equation}
    \label{equation:inverse-of-linking-matrix}
    P^{-1}={
      \arraycolsep=4pt\def\arraystretch{1.2}\frac{1}{2^m-1}\begin{bmatrix}
        c_1c_{m-1}A_0 & c_1c_{m-2}A_1^T & c_1c_{m-3}A_2^T & \cdots & c_1c_1A_{m-2}^T\\
        c_1c_{m-2}A_1 & c_2c_{m-2}A_0 & c_2c_{m-3}A_1^T & \cdots & c_2c_1A_{m-3}^T\\
        c_1c_{m-3}A_2 & c_2c_{m-3}A_1 & c_3c_{m-3}A_0 & \cdots & c_3c_1A_{m-4}^T\\
        \vdots & \vdots &\vdots & \ddots & \vdots \\
        c_1c_1A_{m-2} & c_2c_1A_{m-3} & c_3c_1A_{m-4} & \cdots & c_{m-1}c_1A_0 \\
      \end{bmatrix}
    },
  \end{equation}
  where $c_r:=2^r-1$.  That is, for $k\ge \ell$, the $(k,\ell)$-block
  of $P^{-1}$ is $\frac{c_\ell c_{m-k}}{2^m-1} A_{k-\ell}$.
  A straightforward matrix multiplication confirms $P^{-1} P=I$.  We
  omit the details since it is routine.  (We remark that the
  identities $A_r^2=2^r\cdot I$,
  $A_{k-\ell+1}A_1 = 2A_{k-\ell}A_0=2A_{k-\ell-1}^{\mathstrut}A_1^T$
  and $A_{\ell-k+1}^TA_1^T=2A_{\ell-k}^TA_0=2A_{\ell-k-1}^TA_1$ are
  useful in the verification of $P^{-1}P=I$, for the diagonal,
  below-diagonal and above-diagonal entries respectively.)

  Now, we are ready to verify \eqref{equation:linking-condition-1} and
  \eqref{equation:linking-condition-2}, using
  \eqref{equation:linking-computation} and
  \eqref{equation:inverse-of-linking-matrix}.  First we
  prove~\eqref{equation:linking-condition-1}.  Since each $v_i$ is
  $(-1)$-framed and since $v_i$ and $v_j$ are split for $i\ne j$, we
  have
  \begin{equation}
    \label{equation:linking-of-v_i-v_j-in-S3}
    \lk_{S^3}(v_i^{\mathstrut},v_j')=-\delta_{ij}
    \quad\text{for all $i$ and~$j$}.
  \end{equation}
  If $i\geq 3m-4$, $R(v_i,v_j)=0$ for all $j$, since $v_i$ is
  null-homologous in $S^3 \sm L$.  So, by
  \eqref{equation:linking-computation} and 
  \eqref{equation:linking-of-v_i-v_j-in-S3}, we have
  \begin{equation}
    \label{equation:linking-of-higher-v_i}
    \lk_{\Sigma_m}(v_i^{\mathstrut},v_j') = -\delta_{ij}
    \quad\text{for }i\ge 3m-4.    
  \end{equation}
  From Figure~\ref{figure:covering-surgery-diagram}, $v_{3m-5}=x_1$ in
  $H_1(S^3\sm L)$.  By the definition, $R(x_1,x_1)$ is equal to the
  $(1,1)$-entry of $P^{-1}$ in
  \eqref{equation:inverse-of-linking-matrix}, which is zero.  Thus, by
  \eqref{equation:linking-computation} and
  \eqref{equation:linking-of-v_i-v_j-in-S3}, we have
  \begin{equation}
    \label{equation:self-linking-of-v_3m-5}
    \lk_{\Sigma_m}(v_{3m-5}^{\mathstrut},v'_{3m-5})=-1. 
  \end{equation}
  From this, \eqref{equation:linking-of-higher-v_i} and the fact that
  $\lk_{\Sigma_m}$ is symmetric, it follows that
  \eqref{equation:linking-condition-1} holds.

  It remains to verify \eqref{equation:linking-condition-2}.
  First, for $i\ge 3m-4$, the left and right hand sides of
  \eqref{equation:linking-condition-2} are equal to $1-2^m$ and $-1$,
  respectively, by~\eqref{equation:linking-of-higher-v_i}.  It follows
  that \eqref{equation:linking-condition-2} holds in this case.  Also,
  for $i=3m-5$, both sides of \eqref{equation:linking-condition-2} are
  equal to $-(2^m-1)^2$, by~\eqref{equation:linking-of-higher-v_i}
  and~\eqref{equation:self-linking-of-v_3m-5}.  So
  \eqref{equation:linking-condition-2} holds in this case.

  Suppose $i\le 3m-6$.  In this case, $a_i=2^m-1$.  From
  Figure~\ref{figure:covering-surgery-diagram}, it is seen that $v_i$,
  oriented counterclockwise, is always of the form $v_i=x_o+x_e$, in
  $H_1(S^3\sm L)$, with $o$ odd and $e$ even.  (In fact, $(o,e)$ is of
  the form $(2k-1,2k+2)$ or $(2k+1,2k)$, depending on the choice of
  $i$, but we do not need this information.)
  Using~\eqref{equation:linking-computation},
  \eqref{equation:linking-of-v_i-v_j-in-S3} and $v_{3m-5}=x_1$, we
  have
  \begin{equation}
    \label{equation:linking-of-v_i-v_3m-5}
    (2^m-1)\lk_{\Sigma_m}(v_i,v_{3m-5})=-(2^m-1)R(x_o,x_1)-(2^m-1)R(x_e,x_1).
  \end{equation}
  Inspecting \eqref{equation:inverse-of-linking-matrix} together with
  the matrix $A_r$, entries in the first column of the integer matrix
  $(2^m-1)P^{-1}$ has alternating parity, starting from zero, since
  $c_r$ is always odd.  That is, $(2^m-1)R(x_o,x_1)$ is even and
  $(2^m-1)R(x_e,x_1)$ is odd.  Therefore, from
  \eqref{equation:linking-of-v_i-v_3m-5}, it follows that the left
  hand side of \eqref{equation:linking-condition-2} is odd.

  On the other hand, since the matrix $A_r$ has zero diagonals,
  $P^{-1}$ in~\eqref{equation:inverse-of-linking-matrix} does too, and
  thus $R(x_o,x_o)=R(x_e,x_e)=0$.  So,
  using~\eqref{equation:linking-computation}
  and~\eqref{equation:linking-of-v_i-v_j-in-S3}, we have
  \[
    (2^m-1)\lk_{\Sigma_m}(v_i,v_i') =
    -(2^m-1)-(2^m-1)R(x_o+x_e,x_o+x_e) \equiv 1 \pmod 2.
  \]
  It follows that the right hand side of
  \eqref{equation:linking-condition-2} is odd.  Hence,
  \eqref{equation:linking-condition-2} holds.  This completes the
  proof of Lemma~\ref{lemma:dual-of-w-is-characteristic}.
\end{proof}

\bibliographystyle{amsalpha}
\def\MR#1{}
\bibliography{research}

\end{document}